\newtheorem{theorem}{Theorem}[section]
\newtheorem{conjecture}[theorem]{Conjecture}
\newtheorem{lemma}[theorem]{Lemma}
\let\wt\widetilde
\renewcommand{\d}{{\mathrm d}}
\newcommand{\qbin}[2]{\genfrac{[}{]}{0pt}{}{#1}{#2}}
\begin{document}

\title{A $q$-microscope for supercongruences}

\date{11 March 2018. \emph{Revised}: 13 February 2019}

\author{Victor J. W. Guo}
\address{School of Mathematical Sciences, Huaiyin Normal University, Huai'an 223300, Jiangsu, People's Republic of China}
\email{jwguo@hytc.edu.cn}

\author{Wadim Zudilin}
\address{Department of Mathematics, IMAPP, Radboud University, PO Box 9010, 6500~GL Nijmegen, Netherlands}
\email{w.zudilin@math.ru.nl}

\address{School of Mathematical and Physical Sciences, The University of Newcastle, Callag\-han, NSW 2308, Australia}
\email{wadim.zudilin@newcastle.edu.au}

\thanks{The first author was partially supported by the National Natural Science Foundation of China (grant 11771175).}

\subjclass[2010]{11B65, 11F33, 11Y60, 33C20, 33D15}
\keywords{$1/\pi$; Ramanujan; $q$-analogue; cyclotomic polynomial; radial asymptotics; WZ pair; basic hypergeometric function; (super)congruence.}

\begin{abstract}
By examining asymptotic behavior of certain infinite basic ($q$-) hypergeometric sums at roots of unity
(that is, at a `$q$-microscopic' level) we prove polynomial congruences for their truncations.
The latter reduce to non-trivial (super)congruences for truncated ordinary hypergeometric sums,
which have been observed numerically and proven rarely. A typical example includes derivation, from a $q$-analogue of
Ramanujan's formula
$$
\sum_{n=0}^\infty\frac{\binom{4n}{2n}{\binom{2n}{n}}^2}{2^{8n}3^{2n}}\,(8n+1)
=\frac{2\sqrt{3}}{\pi},
$$
of the two supercongruences
$$
S(p-1)\equiv p\biggl(\frac{-3}p\biggr)\pmod{p^3}
\quad\text{and}\quad
S\Bigl(\frac{p-1}2\Bigr)
\equiv p\biggl(\frac{-3}p\biggr)\pmod{p^3},
$$
valid for all primes $p>3$,
where $S(N)$ denotes the truncation of the infinite sum at the $N$-th place and $\bigl(\frac{-3}{\cdot}\bigr)$ stands for the quadratic character modulo~$3$.
\end{abstract}

\maketitle

\section{Introduction}
\label{s1}

In our study, through several years, of Ramanujan's and Ramanujan-type
formulae \cite{Ra14} for $1/\pi$, a lot of arithmetic mystery have been discovered along the way.
A typical example on the list is the identity
\begin{equation}
\sum_{n=0}^\infty\frac{\binom{4n}{2n}{\binom{2n}{n}}^2}{2^{8n}3^{2n}}\,(8n+1)
=\frac{2\sqrt{3}}{\pi}.
\label{ram1}
\end{equation}
Part of the arithmetic story, which is the main topic of the present note,
is a production of Ramanujan-type supercongruences \cite{Zu09} (with some particular instances indicated in the earlier work \cite{Hamme} of Van Hamme):
truncation of a Ramanujan-type infinite sum at the $(p-1)$-th place happens to be a simple expression modulo $p^3$ for all but finitely many primes $p$.
In our example \eqref{ram1}, the result reads
\begin{equation}
\sum_{k=0}^{p-1}\frac{\binom{4k}{2k}{\binom{2k}{k}}^2}{2^{8k}3^{2k}}\,(8k+1)
\equiv p\biggl(\frac{-3}p\biggr)\pmod{p^3}
\quad\text{for $p>3$ prime},
\label{ram1a}
\end{equation}
where the Jacobi--Kronecker symbol $\bigl(\frac{-3}{\cdot}\bigr)$ `replaces' the square root of $3$. Another experimental observation,
which seems to be true in several cases but not in general, is that truncation of the sum at the $(p-1)/2$-th place, results
in a similar congruence with the same right-hand side, like
\begin{equation}
\sum_{k=0}^{(p-1)/2}\frac{\binom{4k}{2k}{\binom{2k}{k}}^2}{2^{8k}3^{2k}}\,(8k+1)
\equiv p\biggl(\frac{-3}p\biggr)\pmod{p^3}
\quad\text{for $p>3$ prime},
\label{ram1b}
\end{equation}
in the example above.
By noticing that the intermediate terms corresponding to $k$ in the range $(p-1)/2<k\le p-1$ are not
necessarily $0$ modulo $p^3$, we conclude that \eqref{ram1a} and \eqref{ram1b} are in fact different congruences.
The experimental findings \eqref{ram1a}, \eqref{ram1b} were implicitly discussed in \cite{Zu09}
and later recorded in \cite[Conjecture 5.6]{Sun11}.

Development of methods \cite{GuZu,He,Kilbourn,Long,MO08,Mortenson,OZ,Swisher,Hamme,Zu09} for establishing
Ramanu\-jan-type supercongruences like \eqref{ram1a} and
\eqref{ram1b}, sometimes modulo a smaller power of $p$ and normally on
a case-by-case study, was mainly hypergeometric. It involved tricky
applications of numerous hypergeometric identities and use of the
algorithm of creative telescoping, namely of suitable WZ
(Wilf--Zeilberger) pairs. These strategies have finally led \cite{GL18,GZ18} to
$q$-analogues of Ramanujan-type formulae for $1/\pi$ including
\begin{equation}
\sum_{n=0}^{\infty}\frac{q^{2n^2}(q;q^2)_n^2 (q;q^2)_{2n}}{(q^2;q^2)_{2n}(q^6;q^6)_n^2 }[8n+1]
=\frac{(q^3;q^2)_\infty (q^3;q^6)_\infty }{(q^2;q^2)_\infty (q^6;q^6)_\infty}.
\label{q4}
\end{equation}
At this stage we already need to familiarize ourselves with standard hypergeometric notation.
We always consider $q$ inside the unit disc, $|q|<1$, and define
$$
(a;q)_\infty=\prod_{j=0}^\infty(1-aq^j).
$$
Then the $q$-Pochhammer symbol and its non-$q$-version are given by
$$
(a;q)_n=\frac{(a;q)_\infty}{(aq^n;q)_\infty}=\prod_{j=0}^{n-1}(1-aq^j)
\quad\text{and}\quad
(a)_n=\frac{\Gamma(a+n)}{\Gamma(a)}=\prod_{j=0}^{n-1}(a+j)
$$
for non-negative integers $n$, so that
$$
\lim_{q\to1}\frac{(q^a;q)_n}{(1-q)^n}=(a)_n
\quad\text{and}\quad
\lim_{q\to1}\frac{(q;q)_\infty(1-q)^{1-a}}{(q^a;q)_\infty}=\Gamma(a).
$$
The related $q$-notation also includes the $q$-numbers and $q$-binomial coefficients
$$
[n]=[n]_q=\frac{1-q^n}{1-q}
\quad\text{and}\quad
\qbin nm={\qbin nm}_q=\frac{(q;q)_n}{(q;q)_m(q;q)_{n-m}}.
$$
In the case of formula~\eqref{q4}, we see that
\begin{gather*}
\lim_{q\to1}\frac{(q;q^2)_{2n}}{(q^2;q^2)_{2n}}
=\frac{(\frac12)_{2n}}{(2n)!}=\frac{(\frac14)_n(\frac34)_n}{(\frac12)_n\,n!},
\\
\lim_{q\to1}\frac{(q;q^2)_n}{(q^6;q^6)_n}
=\lim_{q\to1}\frac{(q;q^2)_n}{(q^2;q^2)_n\,\prod_{j=1}^n(1+q^{2j}+q^{4j})}
=\frac{(\frac12)_n}{n!\,3^n}
\\ \intertext{and}
\lim_{q\to1}\frac{(q^3;q^6)_\infty}{(q^6;q^6)_\infty(1-q^6)^{1/2}}
=\lim_{q\to1}\frac{(q;q^2)_\infty}{(q^2;q^2)_\infty(1-q^2)^{1/2}}
=\frac{1}{\Gamma(\tfrac12)}=\frac{1}{\sqrt\pi},
\end{gather*}
hence in the limit as $q\to1$ we obtain
$$
\sum_{n=0}^\infty\frac{(\frac14)_n(\frac12)_n(\frac34)_n}{n!^3\,9^n}\,(8n+1)
=\frac{2\sqrt{3}}{\pi}.
$$
This equality transforms into \eqref{ram1} after a simple manipulation of the Pochhammer symbols.

What are $q$-analogues good for?

It is not hard to imagine that suitable truncations of $q$-sums like \eqref{q4} satisfy certain $q$-analogues of supercongruences of type \eqref{ram1a} or \eqref{ram1b}.
It is also reasonable to expect that the earlier strategies
for establishing Ramanujan-type supercongruences possess suitable $q$-analogues. This is indeed the case, and in a series
of papers \cite{Guo1,Guo2,Guo3,Guo4,Guo2018} the first author uses the $q$-WZ machinery to produce many such examples of $q$-supercongruences,
in particular, $q$-analogues for those from Van Hamme's famous list~\cite{Hamme}. Drawbacks of this approach are
short supply of wanted $q$-WZ pairs and lack of $q$-analogues of classical congruences in the required strength.

In this note we offer a different strategy for proving $q$-congruences. The idea rests on the fact that the asymptotic behavior
of an infinite $q$-sum at roots of unity is determined by its truncation evaluated at the roots.
This leads us to a natural extraction of the truncated sum and its evaluation modulo cyclotomic polynomials
$$
\Phi_n(q)=\prod_{\substack{j=1\\ \gcd(j,n)=1}}^n(q-e^{2\pi ij/n})\in\mathbb Z[q]
$$
and their products. In what follows, the congruence $A_1(q)/A_2(q)\equiv0\pmod{P(q)}$
for polynomials $A_1(q),A_2(q),P(q)\in\mathbb Z[q]$ is understood as $P(q)$ divides $A_1(q)$ and is coprime with $A_2(q)$;
more generally, $A(q)\equiv B(q)\pmod{P(q)}$ for rational functions $A(q),B(q)\in\mathbb Z(q)$ means $A(q)-B(q)\equiv0\pmod{P(q)}$.

Our principal result in this direction is the following theorem observed experimentally in \cite{GZ18}.

\begin{theorem}
\label{thm:8k+1}
Let $n$ be a positive integer coprime with $6$. Then
\begin{align}
\sum_{k=0}^{n-1}\frac{(q;q^2)_k^2 (q;q^2)_{2k}}{(q^2;q^2)_{2k}(q^6;q^6)_k^2}[8k+1]q^{2k^2}
&\equiv q^{-(n-1)/2}[n]\biggl(\frac{-3}{n}\biggr) \pmod{[n]\Phi_n(q)^2},
\label{q4a}
\\
\sum_{k=0}^{(n-1)/2}\frac{(q;q^2)_k^2 (q;q^2)_{2k}}{(q^2;q^2)_{2k}(q^6;q^6)_k^2}[8k+1]q^{2k^2}
&\equiv q^{-(n-1)/2}[n]\biggl(\frac{-3}{n}\biggr) \pmod{[n]\Phi_n(q)^2}.
\label{q4b}
\end{align}
\end{theorem}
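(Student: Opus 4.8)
The plan is to prove the congruence modulo each prime-power cyclotomic factor separately and then recombine by the Chinese Remainder Theorem, using
$$
[n]\Phi_n(q)^2=\Phi_n(q)^3\prod_{\substack{d\mid n\\ 1<d<n}}\Phi_d(q)
$$
with the factors pairwise coprime. The decisive part is the evaluation modulo $\Phi_n(q)^3$. It is convenient first to reduce the two assertions to one: writing $t_k(q)$ for the $k$-th summand and $S_N(q)=\sum_{k=0}^N t_k(q)$, the left-hand sides of \eqref{q4a} and \eqref{q4b} differ by the block $S_{n-1}(q)-S_{(n-1)/2}(q)=\sum_{k=(n+1)/2}^{n-1}t_k(q)$. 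Counting the order of $\Phi_n(q)$ in each factor of $t_k(q)$ — and crucially including the factor $[8k+1]$, which is divisible by $\Phi_n(q)$ exactly when $n\mid 8k+1$ — shows that all but finitely many terms of this block already have $\Phi_n(q)$-valuation at least $3$, while the remaining terms, of exact valuation $2$, combine to vanish modulo $\Phi_n(q)^3$. Granting this, it suffices to establish \eqref{q4a}.

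For the core congruence I would turn the $q$-microscope on the exact evaluation \eqref{q4}. Fix a primitive $n$-th root of unity $\zeta$ and let $q\to\zeta$ radially. Since $S_{n-1}(q)$ is a Laurent polynomial, being congruent to the right-hand side of \eqref{q4a} modulo $\Phi_n(q)^3$ is precisely a statement about the first three Taylor coefficients of $S_{n-1}$ at $\zeta$. Writing $S_{n-1}(q)=R(q)-T(q)$, where $R(q)$ is the infinite product on the right of \eqref{q4} and $T(q)=\sum_{k\ge n}t_k(q)$ is the tail, the individual radial expansions of $R$ and $T$ at $\zeta$ may be singular, but their difference is analytic, so the singular parts must cancel and the regular parts deliver the sought Taylor data. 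The factor $R(q)$ is an eta-quotient, and its radial asymptotics at $\zeta$ is governed by the modular transformation. This is what fixes the shape of the answer: the leading radial constant returns the Jacobi symbol $\bigl(\frac{-3}{n}\bigr)$ from the Gauss-sum factor of the transformation, the simple zero of $S_{n-1}$ at $\zeta$ accounts for the factor $[n]$, and the normalization supplies the phase $q^{-(n-1)/2}$.

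The hard part will be matching the tail $T$ to the same precision. A direct order count gives $t_n(q)$ a $\Phi_n(q)$-valuation of $0$, so the tail is genuinely present at $\zeta$ and cannot be neglected; it is its singular expansion that must cancel that of $R$. Here I would exploit the quasi-periodicity of the summand: the Gaussian $q^{2k^2}$ and the factor $[8k+1]$ are $n$-periodic at $\zeta$, and the ratio $t_{k+n}(q)/t_k(q)$ has $\Phi_n(q)$-valuation $0$ with a limit computable from the regularized values of the shifted $q$-Pochhammer symbols. This lets me regroup $T$ into period-$n$ blocks and resum it, and the main obstacle is to show that this resummation reproduces the singular part of $R$ exactly — so that the surviving regular part, read against the eta-quotient expansion, yields the required three Taylor coefficients of $S_{n-1}$ and hence the congruence modulo $\Phi_n(q)^3$.

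Finally, for the residual moduli $\Phi_d(q)$ with $d\mid n$ and $1<d<n$ I would rerun the microscope at a primitive $d$-th root of unity $\zeta_d$, now only to leading order. Because $\Phi_d(q)\mid[n]$, the right-hand side of \eqref{q4a} vanishes modulo $\Phi_d(q)$, so the task reduces to $S_{n-1}(\zeta_d)=0$; this follows since the radial limit of the eta-quotient $R(q)$ at $\zeta_d$ carries the same simple $[d]$-type zero and the leading tail asymptotics match. Assembling the residues modulo $\Phi_n(q)^3$ and modulo each $\Phi_d(q)$ through the Chinese Remainder Theorem then gives \eqref{q4a}, and with the block reduction of the first paragraph also \eqref{q4b}.
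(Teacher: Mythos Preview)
Your proposal follows a genuinely different route from the paper, and it has a real gap precisely at the point you yourself flag as the ``main obstacle.''

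The paper does \emph{not} attempt to read off three Taylor coefficients of $S_{n-1}$ at a primitive $n$-th root of unity by matching singular expansions of the eta-quotient $R$ against the tail $T$. Instead it introduces an extra parameter $a$ and proves the two-variable congruence (Theorem~\ref{thm:8k+1-a})
\[
\sum_{k=0}^{m}[8k+1]\frac{(aq;q^2)_k(q/a;q^2)_k(q;q^2)_{2k}}{(q^2;q^2)_{2k}(aq^6;q^6)_k(q^6/a;q^6)_k}q^{2k^2}
\equiv q^{-(n-1)/2}[n]\Bigl(\tfrac{-3}{n}\Bigr)
\pmod{[n](1-aq^n)(a-q^n)}.
\]
The modulus $(1-aq^n)(a-q^n)$ is handled by \emph{exact} evaluation: at $a=q^{\pm n}$ the left-hand side terminates and the cubic summation \eqref{cubic-tr} gives the value on the nose (Lemma~\ref{lem-new}). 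The modulus $[n]$ is handled by first-order radial asymptotics only --- no higher Taylor data needed. Then one lets $a\to1$: the factor $(1-aq^n)(a-q^n)$ degenerates to $(1-q^n)^2$, which contributes $\Phi_n(q)^2$, and combined with the mod-$[n]$ congruence one gets $[n]\Phi_n(q)^2$. The whole point of the parametric detour is to \emph{avoid} the third-order tail analysis you are proposing.

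Your plan, by contrast, requires you to compute the radial expansion of both $R$ and $T$ at $\zeta$ to order $(q-\zeta)^2$ and show the singular parts cancel while the regular part equals the right-hand side. You do not do this; you write ``the main obstacle is to show that this resummation reproduces the singular part of $R$ exactly'' and leave it there. That sentence is the proof. The quasi-periodicity you invoke gives the leading behaviour of each period-$n$ block, but to get the next two orders you would need the subleading terms of $t_{k+n}/t_k$ at $\zeta$, control over the interchange of the $q\to\zeta$ limit with the infinite resummation, and the explicit modular expansion of the eta-quotient to matching precision. None of this is supplied, and it is not routine.

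There is a second, smaller gap in your opening reduction. You claim that the block $\sum_{k=(n+1)/2}^{n-1}t_k(q)$ vanishes modulo $\Phi_n(q)^3$ because the terms of valuation~$2$ ``combine to vanish.'' A direct count shows that for $(n+1)/2\le k<\lceil(3n+1)/4\rceil$ the $\Phi_n$-valuation of $t_k$ is exactly~$2$ (unless $n\mid 8k+1$), so there are roughly $n/4$ such terms; you give no argument for their cancellation. The paper itself remarks in the introduction that the individual intermediate terms are \emph{not} $0$ modulo $p^3$, so \eqref{q4a} and \eqref{q4b} are genuinely different congruences --- and indeed the paper proves them in parallel via the parametric Theorem~\ref{thm:8k+1-a}, rather than reducing one to the other.
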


Clearly, the limiting case $q\to1$ for $n=p$ leads to the Ramanujan-type supercongruences \eqref{ram1a} and \eqref{ram1b};
importantly, it also leads to more general supercongruences when choosing $n=p^s$, an arbitrary power of prime $p>3$.
The significance of our proof is that it really deals with the $q$-hypergeometric sum \eqref{q4} at a `$q$-microscopic' level
(that is, at roots of unity), hence it cannot be transformed into a derivation of \eqref{ram1a} and \eqref{ram1b} directly from~\eqref{ram1}.

Our proof of Theorem~\ref{thm:8k+1} combines two principles. One corresponds to achieving the congruences in \eqref{q4a} and \eqref{q4b}
modulo $[n]$ only, and it can be easier illustrated in the following `baby' situations also from \cite{GZ18}.

\begin{theorem}
\label{th:baby1}
Let $n$ be a positive odd integer. Then
\begin{align}
\sum_{k=0}^{n-1}(-1)^k \frac{(q;q^2)_k
(-q;q^2)_k^2}{(q^4;q^4)_k(-q^4;q^4)_k^2}[6k+1]q^{3k^2} &\equiv
0\pmod{[n]}, \label{eq:33}
\\
\sum_{k=0}^{(n-1)/2}(-1)^k\frac{(q;q^2)_k
(-q;q^2)_k^2}{(q^4;q^4)_k(-q^4;q^4)_k^2}[6k+1]q^{3k^2} &\equiv
0\pmod{[n]}. \label{eq:44}
\end{align}
\end{theorem}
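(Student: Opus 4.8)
The plan is to prove both congruences at once by evaluating the truncated sums at the zeros of the modulus. Since $[n]=\prod_{1<d\mid n}\Phi_d(q)$ and the cyclotomic factors $\Phi_d(q)$ are pairwise coprime, it suffices to fix a divisor $d>1$ of $n$ together with a primitive $d$-th root of unity $\zeta$, and to show that each truncated sum, regarded as a rational function of $q$, is \emph{regular} at $\zeta$ and \emph{vanishes} there. Regularity is precisely the coprimality of $\Phi_d(q)$ with the denominator that the convention for $A(q)\equiv0$ requires, while vanishing gives the divisibility; note that $1\notin\{\zeta\}$, so we never touch the spurious factor $q-1$ hidden in $[6k+1]$. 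Throughout I write $c_k(q)$ for the $k$-th summand of \eqref{eq:33}.

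The next step is a bookkeeping of zeros and poles of $c_k$ at $q=\zeta$. Because $d$ is odd, one checks that the factors $1+q^{2j+1}$ and $1+q^{4j}$ never vanish at $\zeta$, so $(-q;q^2)_k$ and $(-q^4;q^4)_k$ stay invertible; meanwhile the numerator factor $(q;q^2)_k$ acquires its first zero exactly at $k=(d+1)/2$, and the denominator factor $(q^4;q^2)... $ — that is, $(q^4;q^4)_k$ — acquires its first zero only at $k=d$ since $\gcd(4,d)=1$. Consequently $c_k(\zeta)=0$ for $(d+1)/2\le k\le d-1$, while $c_k(\zeta)$ is finite for $0\le k\le(d-1)/2$. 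For a \emph{primitive} $n$-th root of unity (the case $d=n$) this already does a lot: the range $(n+1)/2\le k\le n-1$ contributes nothing and there is no index $k\ge n$, so the full sum \eqref{eq:33} and the half sum \eqref{eq:44} \emph{both} collapse at $\zeta$ to the same terminating expression $\sum_{k=0}^{(d-1)/2}c_k(\zeta)$, explaining why the two congruences are governed by one evaluation.

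It remains to show that this terminating sum is zero and to deal with the non-primitive roots. The sum $\sum_{k=0}^{(d-1)/2}c_k(\zeta)$ is a terminating very-well-poised basic hypergeometric series — the very-well-poised character being forced by the factor $[6k+1]$ together with the Gaussian $q^{3k^2}$ — and I would evaluate it in closed form by an appropriate $q$-summation of Dixon/Dougall type, obtaining a product of $q$-Pochhammer symbols that manifestly vanishes at $q=\zeta$. (The case $d=3$ is already transparent: one finds $c_0(\zeta)=1$ and $c_1(\zeta)=-1$, so their sum is $0$.) For a non-primitive root, $1<d<n$, the terms with $k\ge d$ are genuine $0/0$ indeterminacies, since the zero of $(q;q^2)_k$ at $k=(d+1)/2$ must cancel the pole of $(q^4;q^4)_k$ at $k=d$; here I would decompose the index as $k=\ell d+r$ and use the periodicity of the Pochhammer symbols at $\zeta$ (a $q$-Lucas-type reduction) to reorganize the sum into blocks, each block reducing, via the base evaluation just described, to a multiple of the primitive-root sum and hence vanishing, while the same reduction certifies that the surviving poles cancel.

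The main obstacle is exactly this non-primitive case: establishing \emph{regularity} at $\zeta$ of order $d<n$ (the cancellation of the $k\ge d$ singularities) simultaneously with divisibility, and carrying out the block/periodicity argument cleanly enough that the terminating evaluation can be applied uniformly. A secondary but essential point is pinning down the precise terminating $q$-summation that forces $\sum_{k=0}^{(d-1)/2}c_k(\zeta)=0$; once that identity is in hand, the primitive-root case is immediate and the whole theorem follows by assembling the factors $\Phi_d(q)$ over all $d\mid n$ with $d>1$.
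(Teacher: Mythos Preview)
Your overall framework is correct and matches the paper: reduce to showing that each truncated sum vanishes at every primitive $d$-th root of unity $\zeta$ for $d\mid n$, $d>1$; observe that $c_k(\zeta)=0$ for $(d-1)/2<k\le d-1$ so the full and half sums over a single block coincide; and handle non-primitive roots by a $q$-Lucas block decomposition. That skeleton is exactly what the paper does.

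The gap is at the heart of the argument: your plan to show $\sum_{k=0}^{(d-1)/2}c_k(\zeta)=0$ by ``an appropriate $q$-summation of Dixon/Dougall type'' will not work, and you do not name one. The series is \emph{not} a standard very-well-poised ${}_6\phi_5$ or ${}_8\phi_7$: the factor $[6k+1]q^{3k^2}$ together with the mixed bases $q^2$/$q^4$ places it in the family of Rahman's \emph{quadratic} summations, for which there is no off-the-shelf terminating Dixon/Dougall evaluation at $q=\zeta$. The paper sidesteps this entirely. Its key idea---the ``$q$-microscope''---is to use the \emph{infinite} identity
\[
\sum_{k=0}^\infty c_k(q)=\frac{(q^3;q^4)_\infty(q^5;q^4)_\infty}{(-q^4;q^4)_\infty^2},
\]
let $q\to\zeta$ radially, and read off the vanishing: the right-hand side tends to $0$ (the numerator hits a zero, the denominator does not), while the left-hand side factors as
\[
\Bigl(\sum_{\ell=0}^\infty c_\zeta(\ell d)\Bigr)\Bigl(\sum_{k=0}^{d-1}c_\zeta(k)\Bigr)
=\frac{\sqrt6}{3}\sum_{k=0}^{d-1}c_\zeta(k),
\]
the outer series being computed via the $q$-Lucas lemma as $\sum_\ell(-1)^\ell\binom{2\ell}{\ell}/8^\ell=\sqrt6/3\neq0$. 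This forces $\sum_{k=0}^{d-1}c_\zeta(k)=\sum_{k=0}^{(d-1)/2}c_\zeta(k)=0$ without ever producing a closed-form terminating evaluation. That asymptotic step is the missing idea in your proposal; without it (or a genuine terminating quadratic summation, which you have not supplied), the argument does not close.
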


\begin{theorem}
\label{th:baby2}
Let $n$ be a positive odd integer. Then
\begin{align}
\sum_{k=0}^{n-1}\frac{(q^2;q^4)_k
(-q;q^2)_k^2}{(q^4;q^4)_k(-q^4;q^4)_k^2}[6k+1]q^{k^2} &\equiv
0\pmod{[n]}, \label{eq:11}
\\
\sum_{k=0}^{(n-1)/2}\frac{(q^2;q^4)_k
(-q;q^2)_k^2}{(q^4;q^4)_k(-q^4;q^4)_k^2}[6k+1]q^{k^2} &\equiv
0\pmod{[n]}. \label{eq:22}
\end{align}
\end{theorem}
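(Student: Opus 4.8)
The plan is to prove both congruences simultaneously by treating one cyclotomic factor at a time. Write $a_k=\dfrac{(q^2;q^4)_k (-q;q^2)_k^2}{(q^4;q^4)_k(-q^4;q^4)_k^2}[6k+1]q^{k^2}$ for the common summand. Since $n$ is odd, $[n]=\prod_{d\mid n,\,d>1}\Phi_d(q)$ with pairwise coprime cyclotomic factors, so it suffices to show that each of the two truncated sums is $\equiv0\pmod{\Phi_d(q)}$ for every divisor $d>1$ of $n$; every such $d$ is odd. Fixing such a $d$ and a primitive $d$-th root of unity $\zeta$, the task becomes to prove that the truncations vanish at $q=\zeta$.

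First I would locate the vanishing terms. Because $d$ is odd, $-1$ is not a $d$-th root of unity, so every factor of $(-q;q^2)_k$ and $(-q^4;q^4)_k$ is a unit at $q=\zeta$; thus these play no role in the $\Phi_d$-adic order of $a_k$. In the numerator $(q^2;q^4)_k=\prod_{j=0}^{k-1}(1-q^{4j+2})$ a factor vanishes at $\zeta$ exactly when $2j+1\equiv0\pmod d$, i.e.\ as $k$ crosses the residue class $(d-1)/2$ modulo $d$, while in the denominator $(q^4;q^4)_k$ a factor vanishes only as $k$ crosses $0$ modulo $d$. A short count of these two contributions shows that $a_k(\zeta)=0$ whenever $k\bmod d\in\{(d+1)/2,\dots,d-1\}$, and that $a_k$ has $\Phi_d$-adic order $0$ (a finite, generically nonzero value) when $k\bmod d\in\{0,1,\dots,(d-1)/2\}$.

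Next I would exploit the block structure modulo $d$. Using $\zeta^{(md+j)^2}=\zeta^{j^2}$ and $\zeta^{6(md+j)+1}=\zeta^{6j+1}$, together with a telescoping of the one-block ratio $a_{k+d}/a_k$ at $\zeta$ (the $q$-number factor telescopes to $1$ and the Gaussian factor $q^{(k+d)^2-k^2}$ evaluates to $1$, leaving a constant $\lambda$ coming from the Pochhammer factors over a complete residue system), one finds that the surviving part of each length-$d$ block equals $\lambda^m$ times the zeroth block sum $S_0:=\sum_{k=0}^{(d-1)/2}a_k(\zeta)$. Since $n=dm_0$ with $m_0$ odd, one has $(n-1)/2\equiv(d-1)/2$ and $n-1\equiv d-1$ modulo $d$, so both truncations cover only complete blocks, and each therefore equals a geometric multiple of $S_0$. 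In this way both \eqref{eq:11} and \eqref{eq:22} reduce to the single identity $S_0=0$, equivalently $\sum_{k=0}^{d-1}a_k\equiv0\pmod{\Phi_d(q)}$ (the terms $k=(d+1)/2,\dots,d-1$ being already $\equiv0$).

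The hard part is this core identity $S_0=0$. It is not a term-by-term pairing: for $d=5$ one gets $a_0+a_1+a_2=1-\varphi^2+\varphi=0$ with $\varphi=(1+\sqrt5)/2$, a genuine algebraic cancellation rather than a sign reversal. I would establish it by evaluating the truncation in closed form through a terminating basic hypergeometric summation of very-well-poised type — the Gaussian factor $q^{k^2}$ signalling a quadratic (base-changing) specialization — and then reading off from the resulting product that it carries a factor divisible by $\Phi_d(q)$. Producing the right summation formula (or an equivalent $q$-Dixon/${}_6\phi_5$-type identity whose product side vanishes at $\zeta$) is the main obstacle; once it is in hand, the reductions above deliver both \eqref{eq:11} and \eqref{eq:22} at one stroke.
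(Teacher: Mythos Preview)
Your reduction to the single block identity $S_0=\sum_{k=0}^{(d-1)/2}a_k(\zeta)=0$ for each odd divisor $d>1$ of $n$ is correct and is exactly the structure the paper uses (including the observation that the terms with $k\bmod d>(d-1)/2$ vanish at $\zeta$, so that the full block sum and the half block sum coincide). The gap is in the step you yourself flag as ``the main obstacle'': you propose to prove $S_0=0$ by finding a terminating very-well-poised or $q$-Dixon type summation whose product side is divisible by $\Phi_d(q)$, but you do not produce such a formula, and it is far from clear that one exists in a directly usable form for this summand.

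The paper's argument for $S_0=0$ is quite different from what you envision and avoids any terminating closed form. It starts from the \emph{non-terminating} identity
\[
\sum_{k=0}^{\infty}a_k(q)=\frac{(-q^2;q^4)_\infty^2}{(1-q)(-q^4;q^4)_\infty^2}
\]
(established earlier via a quadratic transformation). Letting $q\to\zeta$ radially, the right-hand side stays bounded (the partial products are eventually periodic with a finite limiting set of values), while on the left the block decomposition gives $\bigl(\sum_{\ell\ge0}\lambda^\ell\bigr)\cdot S_0$ with the explicit block ratio $\lambda=\binom{2\ell}{\ell}/4^\ell$ term by term, so that the outer series $\sum_{\ell\ge0}\binom{2\ell}{\ell}/4^\ell$ \emph{diverges}. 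The only way a divergent factor times $S_0$ can equal something bounded is if $S_0=0$. This ``divergence versus boundedness'' trick is the key idea you are missing; it replaces the need for any terminating evaluation and is the whole point of the paper's ``$q$-microscope'' method.
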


The second principle is about getting one more parameter involved in the $q$-story,
to opt for `creative microscoping'.

\begin{theorem}\label{thm:8k+1-a}
Let $n$ be a positive integer coprime with $6$. Then, for any indeterminates $a$ and $q$, we have
modulo $[n](1-aq^n)(a-q^n)$,
\begin{align}
\sum_{k=0}^{n-1}\frac{(aq;q^2)_k (q/a;q^2)_k (q;q^2)_{2k}}{(q^2;q^2)_{2k}(aq^6;q^6)_k (q^6/a;q^6)_k }[8k+1]q^{2k^2}
&\equiv q^{-(n-1)/2}[n]\biggl(\frac{-3}{n}\biggr),
\label{q4a-new}
\\
\sum_{k=0}^{(n-1)/2}\frac{(aq;q^2)_k (q/a;q^2)_k (q;q^2)_{2k}}{(q^2;q^2)_{2k}(aq^6;q^6)_k (q^6/a;q^6)_k }[8k+1]q^{2k^2}
&\equiv q^{-(n-1)/2}[n]\biggl(\frac{-3}{n}\biggr).
\label{q4b-new}
\end{align}
\end{theorem}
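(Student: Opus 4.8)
The plan is to deploy the creative microscoping method: treat $a$ as a genuine free parameter and prove the congruence separately modulo each of the three factors $[n]$, $1-aq^n$ and $a-q^n$. These are pairwise coprime as polynomials in $\mathbb Z[a,q]$ up to units: neither $1-aq^n$ nor $a-q^n$ divides $[n]\in\mathbb Z[q]$, and $1-aq^n=-q^n(a-q^{-n})$ is coprime to $a-q^n$ because $q^{-n}\ne q^n$ as rational functions in $q$. Writing each side over the common denominator $(q^2;q^2)_{2k}(aq^6;q^6)_k(q^6/a;q^6)_k$ and noting, using $\gcd(n,6)=1$, that this denominator is coprime to all three moduli, it then suffices to establish the three congruences one at a time; the statement modulo the product $[n](1-aq^n)(a-q^n)$ follows from the pairwise coprimality.

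Next I treat the factors $1-aq^n$ and $a-q^n$. The summand is invariant under $a\mapsto1/a$, which interchanges $(aq;q^2)_k\leftrightarrow(q/a;q^2)_k$ and $(aq^6;q^6)_k\leftrightarrow(q^6/a;q^6)_k$ and leaves the rest untouched; since this swap also exchanges the two factors while fixing the right-hand side, it is enough to handle $a=q^{-n}$. Here the numerator factor $(q^{1-n};q^2)_k=\prod_{j=0}^{k-1}(1-q^{1-n+2j})$ acquires the vanishing term $j=(n-1)/2$ as soon as $k>(n-1)/2$ (recall $n$ is odd), whereas the denominators $(q^{6-n};q^6)_k$ and $(q^{6+n};q^6)_k$ never vanish because $\gcd(n,6)=1$. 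Thus every term with $(n-1)/2<k\le n-1$ drops out, and both \eqref{q4a-new} and \eqref{q4b-new} collapse to the single terminating sum
$$
\sum_{k=0}^{(n-1)/2}\frac{(q^{1-n};q^2)_k(q^{1+n};q^2)_k(q;q^2)_{2k}}{(q^2;q^2)_{2k}(q^{6-n};q^6)_k(q^{6+n};q^6)_k}[8k+1]q^{2k^2}.
$$
It then remains to show this equals $q^{-(n-1)/2}[n]\bigl(\frac{-3}{n}\bigr)$. I would rewrite it as a terminating very-well-poised basic hypergeometric series and evaluate it by a closed-form summation, with the cubic relation between the bases $q^2$ and $q^6$ forcing a split according to the residue of $n$ modulo $3$; this split is exactly what produces the character $\bigl(\frac{-3}{n}\bigr)$.

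For the factor $[n]$, observe that $[n]$ divides the right-hand side $q^{-(n-1)/2}[n]\bigl(\frac{-3}{n}\bigr)$, so what is needed is that each of the two truncated sums is itself $\equiv0\pmod{[n]}$ for generic $a$. Since $[n]=\prod_{d\mid n,\,d>1}\Phi_d(q)$ with the $\Phi_d(q)$ pairwise coprime, I reduce to divisibility by each $\Phi_d(q)$ with $d\mid n$ and $d>1$. Specializing $q$ to a primitive $d$-th root of unity, appropriate $q$-Pochhammer factors in the summand vanish and the surviving terms cancel under a reflection of the summation index; this is precisely the device behind the `baby' Theorems~\ref{th:baby1} and~\ref{th:baby2}, whose full- and half-length versions show that the very same argument applies to the $a$-parametrised summand and disposes of both \eqref{q4a-new} and \eqref{q4b-new} modulo $[n]$.

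Combining the three congruences through the coprimality recorded above yields the claim modulo $[n](1-aq^n)(a-q^n)$ for both identities at once. I expect the decisive difficulty to lie in the second step: identifying the exact terminating summation that evaluates the collapsed sum at $a=q^{-n}$, and in particular tracking the sign so that the quadratic character $\bigl(\frac{-3}{n}\bigr)$ emerges with the correct value in each residue class $n\equiv\pm1\pmod6$. By contrast, the mod-$[n]$ pairing and the coprimality bookkeeping are comparatively routine once this evaluation is secured.
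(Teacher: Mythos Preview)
Your overall architecture---prove the congruence separately modulo $[n]$, $1-aq^n$ and $a-q^n$, then assemble via pairwise coprimality---is exactly the paper's, and your handling of the factors $1-aq^n$ and $a-q^n$ is correct: the symmetry $a\leftrightarrow 1/a$ reduces to $a=q^{-n}$, the sum collapses to $k\le(n-1)/2$, and the resulting terminating sum is evaluated in the paper as Lemma~\ref{lem-new} by specialising the cubic summation~\eqref{cubic-tr}. So the ``decisive difficulty'' you flag is real but already packaged there.

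The genuine gap is in your mod-$[n]$ step. The device behind Theorems~\ref{th:baby1} and~\ref{th:baby2} is \emph{not} a reflection $k\mapsto d-1-k$ of the summation index; no such antisymmetry of $c_\zeta(k)$ is used, and there is no reason to expect one. What those proofs actually do is take the radial limit $q\to\zeta$ of an infinite summation identity: the left side factors as $\bigl(\sum_{\ell\ge0} c_\zeta(\ell d)\bigr)\cdot\bigl(\sum_{k=0}^{d-1}c_\zeta(k)\bigr)$, and either the right side tends to $0$ or the $\ell$-series diverges against a bounded right side, forcing $\sum_{k=0}^{d-1}c_\zeta(k)=0$. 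For the present theorem the paper runs exactly that divergence argument (here $\sum_\ell 2^{-4\ell}\binom{4\ell}{2\ell}=\infty$ versus a bounded right-hand side of~\eqref{q4u}) to obtain~\eqref{sum1}. More importantly, the half-length vanishing $\sum_{k=0}^{(d-1)/2}c_\zeta(k)=0$ does \emph{not} follow from the same trick here: unlike in the baby theorems, the intermediate terms $c_\zeta(k)$ for $(d-1)/2<k\le d-1$ are not all zero (the relevant factor is $(q;q^2)_{2k}$, not $(q;q^2)_k$, so the vanishing range shifts and leaves nonzero terms above $(d-1)/2$). The paper therefore invokes a second identity, Lemma~\ref{lem-other}, obtained from a different specialisation $c=q^{1-n}/a$ of~\eqref{cubic-tr}, to get~\eqref{sum1/2}. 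Without that separate ingredient---or a genuine substitute---your mod-$[n]$ argument for~\eqref{q4b-new} does not go through, and your description of the mechanism for~\eqref{q4a-new} needs to be replaced by the radial-limit argument rather than an unjustified reflection.
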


Our exposition below is as follows. In Section~\ref{s2} we prove Theorems~\ref{th:baby1}, \ref{th:baby2} and highlight some
difficulties in doing so by the $q$-WZ method. In Section~\ref{s3} we demonstrate Theorem~\ref{thm:8k+1-a}
and show how it implies Theorem~\ref{thm:8k+1}. Section~\ref{s4} contains several further results on
Ramanujan-type $q$-supercongruences and outlines of their proofs. Known congruences and $q$-congruences
for truncated hypergeometric sums already form a broad area of research; in our final Section~\ref{s5} we
record several open problems and directions which will initiate further development of the method in this note
and of traditional hypergeometric techniques.

We remark that asymptotic behavior of $q$-series at roots of unity attracts a lot of attention in recent studies
of mock theta functions and so-called quantum modular forms; we limit our citations about related notion and results
to \cite{FOR13,Za10}. This gives a good indication, at least a hope, that the methods developed in those areas may shed
some light on $q$-supercongruences and their $q\to1$ implications.

\section{Asymptotics at roots of unity}
\label{s2}

A fundamental principle for computing basic hypergeometric sums at roots of unity is encoded in the following
simple observation known as the $q$-Lucas theorem (see \cite{Olive} and \cite[Proposition 2.2]{De}).

\begin{lemma}
\label{prop:root}
Let $\zeta$ be a primitive $d$-th root of unity and let $a,b,\ell,k$ be non-negative integers with $0\le b,k\le d-1$.
Then
$$
{\qbin{ad+b}{\ell d+k}}_\zeta=\binom a\ell{\qbin bk}_\zeta.
$$
\end{lemma}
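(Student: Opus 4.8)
The plan is to derive the identity from the terminating $q$-binomial theorem
$$
\prod_{j=0}^{N-1}(1+xq^j)=\sum_{k=0}^{N}q^{\binom k2}\qbin Nk x^k,
$$
by specializing $q=\zeta$, setting $N=ad+b$, and comparing the coefficients of $x^{\ell d+k}$ on the two sides. On the right-hand side this coefficient is simply $\zeta^{\binom{\ell d+k}2}\qbin{ad+b}{\ell d+k}_\zeta$, so the task reduces to computing the same coefficient from a suitably factored form of the left-hand side.

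First I would split the product over $j=0,\dots,ad+b-1$ into $a$ consecutive blocks of length $d$ together with a terminal block of length $b$. Since $\zeta^d=1$, each full block factors as $\prod_{j=0}^{d-1}(1+x\zeta^j)=1-(-x)^d$, the numbers $\zeta^j$ running through all $d$-th roots of unity; hence the $a$ full blocks contribute $(1-(-1)^dx^d)^a=\sum_\ell\binom a\ell(-1)^{\ell(d+1)}x^{d\ell}$. The terminal block, having length $b\le d-1$, is exactly the degree-$b$ instance of the same identity at $q=\zeta$, namely $\sum_{k=0}^b\zeta^{\binom k2}\qbin bk_\zeta x^k$. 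Because $0\le k\le d-1$, the exponents $d\ell+k$ arising in the product of these two expansions are pairwise distinct, so the coefficient of $x^{\ell d+k}$ may be read off termwise, giving $\binom a\ell(-1)^{\ell(d+1)}\zeta^{\binom k2}\qbin bk_\zeta$.

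Matching the two expressions for the coefficient of $x^{\ell d+k}$ yields the claimed equality $\qbin{ad+b}{\ell d+k}_\zeta=\binom a\ell\qbin bk_\zeta$ up to the prefactor $(-1)^{\ell(d+1)}\zeta^{\binom k2-\binom{\ell d+k}2}$, and the main obstacle is to verify that this prefactor is trivial. For that I would record the elementary identity $\binom{\ell d+k}2-\binom k2=\tfrac12\ell d(\ell d+2k-1)$ and then treat the parity of $d$ separately. When $d$ is odd, $d+1$ is even so the sign is $1$, while $\ell d(\ell d+2k-1)/2$ is a multiple of $d$, so the root-of-unity factor is $1$ as well. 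When $d$ is even, $\ell d+2k-1$ is odd and $\zeta^{d/2}=-1$ turns the root-of-unity factor into $(-1)^{\ell}$, which cancels the sign $(-1)^{\ell(d+1)}=(-1)^{\ell}$. In either case the prefactor equals $1$, completing the proof. An alternative, slightly more computational route would write the $q$-binomial coefficient as a ratio of $(q;q)_m$-products, count the order of vanishing $\lfloor m/d\rfloor$ of each factor at $\zeta$, and evaluate the surviving cyclotomic contributions directly; I expect the bookkeeping of vanishing orders to be the delicate point there, which is precisely what the generating-function argument sidesteps.
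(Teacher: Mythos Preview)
Your proof is correct. The paper does not actually prove this lemma; it merely states the result with citations to Olive and D\'esarm\'enien, so there is no ``paper's own proof'' to compare against. Your generating-function argument via the finite $q$-binomial theorem is the classical route (it is essentially D\'esarm\'enien's argument), and every step checks out: the factorization of the product at $q=\zeta$ into $a$ copies of $1-(-1)^dx^d$ times the length-$b$ tail is correct, the uniqueness of the decomposition $\ell d+k$ with $0\le k\le d-1$ lets you read off coefficients termwise, and your parity analysis showing that the residual factor $(-1)^{\ell(d+1)}\zeta^{\binom k2-\binom{\ell d+k}2}$ equals $1$ in both the odd-$d$ and even-$d$ cases is accurate. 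The only cosmetic remark is that your argument also covers the degenerate cases $k>b$ or $\ell>a$ automatically (both sides vanish), which you might mention in one clause for completeness.
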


We recall that the congruences \eqref{eq:33}--\eqref{eq:22} are motivated by the following $q$-hyper\-geo\-metric identities:
\begin{align}
\sum_{k=0}^\infty(-1)^k\frac{(q;q^2)_k
(-q;q^2)_k^2}{(q^4;q^4)_k(-q^4;q^4)_k^2}[6k+1]q^{3k^2}
&=\frac{(q^3;q^4)_\infty (q^5;q^4)_\infty}{(-q^4;q^4)_\infty^2},
\label{eq:inf-2}
\\
\sum_{k=0}^\infty\frac{(q^2;q^4)_k
(-q;q^2)_k^2}{(q^4;q^4)_k(-q^4;q^4)_k^2}[6k+1]q^{k^2}
&=\frac{(-q^2;q^4)_\infty^2}{(1-q)(-q^4;q^4)_\infty^2},
\label{eq:inf-1}
\end{align}
derived in \cite{GZ18} with a help of the quadratic transformation \cite[eq.~(4.6)]{Ra93}.
The equalities \eqref{eq:inf-2}, \eqref{eq:inf-1} can be written as
\begin{align}
\sum_{k=0}^\infty(-1)^k{\qbin{2k}{k}}_q\frac{(-q;q^2)_k^2 [6k+1]q^{3k^2}}{(-q;q)_k^2(-q^2;q^2)_k(-q^4;q^4)_k^2}
&=\frac{(q^3;q^2)_\infty}{(-q^4;q^4)_\infty^2},
\label{eq:inf-4}
\\
\sum_{k=0}^\infty{\qbin{2k}{k}}_{q^2}\frac{(-q;q^2)_k^2 [6k+1]q^{k^2}}{(-q^2;q^2)_k^2 (-q^4;q^4)_k^2}
&=\frac{(-q^2;q^4)_\infty^2}{(1-q)(-q^4;q^4)_\infty^2}.
\label{eq:inf-3}
\end{align}

\begin{proof}[Proof of Theorem~\textup{\ref{th:baby1}}]
It is immediate that \eqref{eq:33} and \eqref{eq:44} are true for $n=1$.

For $n>1$, let $\zeta\ne1$ be an $n$-th root of unity, not necessarily primitive.
This means that $\zeta$ is a primitive root of unity of odd degree $d\mid n$.
For the right-hand side in \eqref{eq:inf-4}, we clearly have
\begin{equation}
\frac{(q^3;q^2)_\infty}{(-q^4;q^4)_\infty^2}=\prod_{j=1}^\infty\frac{1-q^{2j+1}}{(1+q^{4j})^2}\to0
\quad\text{as}\; q\to\zeta,
\label{zzeta1}
\end{equation}
because the numerator of the product `hits' the zero at $q=\zeta$, while the denominator does not vanish at $q=\zeta$.
If $c_q(k)$ denotes the $k$-th term on the left-hand side in \eqref{eq:inf-4} (or \eqref{eq:inf-2}),
then we write the left-hand side as
$$
\sum_{\ell=0}^\infty c_q(\ell d)\sum_{k=0}^{d-1}\frac{c_q(\ell d+k)}{c_q(\ell d)}.
$$
Observe that, for the internal terms,
$$
\lim_{q\to\zeta}\frac{c_q(\ell d+k)}{c_q(\ell d)}
=c_\zeta(k),
$$
and also that
\begin{equation}
c_\zeta(k)=0 \quad\text{for $k$ in the range}\; (d-1)/2<k\le d-1,
\label{zzeta2}
\end{equation}
because of the factor
$(q;q^2)_k=\prod_{j=1}^k(1-q^{2j-1})$ in the numerator of $c_q(k)$ (as seen in \eqref{eq:inf-2}).
With the help of Lemma~\ref{prop:root},
$$
\lim_{q\to\zeta}c_q(\ell d)=c_\zeta(\ell d)
=(-1)^\ell\binom{2\ell}{\ell}\frac{(-\zeta;\zeta^2)_d^{2\ell}}
{(-\zeta;\zeta)_d^{2\ell}(-\zeta^2;\zeta^2)_d^{\ell}(-\zeta^4;\zeta^4)_d^{2\ell}}
=\frac{(-1)^\ell}{8^\ell}\binom{2\ell}{\ell},
$$
since $\zeta,\zeta^2,\zeta^4$ are all primitive $d$-th roots of unity, and
\begin{equation}
\begin{gathered}
(-\zeta;\zeta)_{d}=(1+\zeta^d)\prod_{j=1}^{d-1}(1+\zeta^j)=2,
\quad
(-\zeta^2;\zeta^2)_{d}=(-\zeta^4;\zeta^4)_{d}=2,
\\
(-\zeta;\zeta^2)_{d}
=(1+\zeta)(1+\zeta^3)\dotsb(1+\zeta^d)(1+\zeta^{d+2})\dotsb(1+\zeta^{2d-1})=(-\zeta;\zeta)_{d}=2.
\end{gathered}
\label{eq:zeta-sim}
\end{equation}
Using the binomial theorem
$$
(1-z)^{-1/2}=\sum_{\ell=0}^\infty\frac{(\frac12)_\ell}{\ell!}\,z^\ell
=\sum_{\ell=0}^\infty\binom{2\ell}{\ell}\biggl(\frac z4\biggr)^\ell
$$
we deduce that
\begin{equation}
\sum_{\ell=0}^\infty (-1)^\ell\binom{2\ell}{\ell}\frac{1}{8^\ell}=\frac{\sqrt{6}}{3}.
\label{eq:xi-2}
\end{equation}
It follows that the limiting case of the equality \eqref{eq:inf-4} as $q\to\zeta$ assumes the form
$$
\frac{\sqrt{6}}{3}\sum_{k=0}^{d-1}c_\zeta(k)
=\frac{\sqrt{6}}{3}\sum_{k=0}^{(d-1)/2}c_\zeta(k)
=0,
$$
where the first equality follows from \eqref{zzeta2} and the second equality is implied by~\eqref{zzeta1}.
By above it remains to notice that
$$
\sum_{k=0}^{n-1}c_\zeta(k)=\sum_{\ell=0}^{n/d-1}\sum_{k=0}^{d-1}c_\zeta(\ell d+k)
=\sum_{\ell=0}^{n/d-1}\frac{(-1)^\ell }{8^\ell}
{2\ell \choose \ell }\sum_{k=0}^{d-1}c_\zeta(k)=0
$$
and
$$
\sum_{k=0}^{(n-1)/2}c_\zeta(k)
=\sum_{\ell=0}^{(n/d-3)/2}\frac{(-1)^\ell }{8^\ell}{2\ell \choose \ell }\sum_{k=0}^{d-1}c_\zeta(k)+\sum_{k=0}^{(d-1)/2}c_\zeta((n-d)/2+k)=0,
$$
which imply that both sums $\sum_{k=0}^{n-1}c_q(k)$ and $\sum_{k=0}^{(n-1)/2}c_q(k)$ are divisible
by the cyclotomic polynomial $\Phi_d(q)$. Since this is true for any divisor $d>1$ of $n$, we conclude that they
are divisible by
\begin{equation*}
\prod_{\substack{d\mid n\\d>1}}\Phi_d(q)=[n].
\qedhere
\end{equation*}
\end{proof}

\begin{proof}[Proof of Theorem~\textup{\ref{th:baby2}}]
Similarly, we can prove \eqref{eq:11} and \eqref{eq:22}. The difference is that we replace the evaluation \eqref{eq:xi-2} with
\begin{align*}
\sum_{\ell=0}^\infty\binom{2\ell}{\ell}\frac{(-\zeta;\zeta^2)_{d}^{2\ell}}{(-\zeta^2;\zeta^2)_{d}^{2\ell}(-\zeta^4;\zeta^4)_{d}^{2\ell}}
=\sum_{\ell=0}^\infty\binom{2\ell}{\ell}\frac{1}{4^\ell}=\infty,
\end{align*}
while the right-hand side in \eqref{eq:inf-3} is uniformly bounded as $q\to\zeta$. Indeed, the latter follows from
\begin{align*}
\lim_{q\to\zeta}\frac{(-q^2;q^4)_{\ell d+k}^2}{(1-q)(-q^4;q^4)_{\ell d+k}^2}
=\frac{(-\zeta^2;\zeta^4)_{\ell d+k}^2}{(1-\zeta)(-\zeta^4;\zeta^4)_{\ell d+k}^2}
=\frac{(-\zeta^2;\zeta^4)_k^2}{(1-\zeta)(-\zeta^4;\zeta^4)_k^2}
\end{align*}
for any $\ell\ge0$ and $0\le k<d$ (see~\eqref{eq:zeta-sim}), so that the expression
$$
\frac{(-q^2;q^4)_n^2}{(1-q)(-q^4;q^4)_n^2}
$$
is bounded above by
$$
\frac1{|1-\zeta|}\max_{0\le k<d}\frac{|(-\zeta^2;\zeta^4)_k|^2}{|(-\zeta^4;\zeta^4)_k|^2}+1
$$
as $q\to\zeta$. Thus, we conclude that
\begin{align*}
\sum_{k=0}^{d-1}{\qbin{2k}{k}}_{\zeta^2}\frac{(-\zeta;\zeta^2)_k^2 [6k+1]_\zeta\zeta^{k^2}}{(-\zeta^2;\zeta^2)_k^2(-\zeta^4;\zeta^4)_k^2}
=\sum_{k=0}^{(d-1)/2}{\qbin{2k}{k}}_{\zeta^2}\frac{(-\zeta;\zeta^2)_k^2 [6k+1]_\zeta\zeta^{k^2}}{(-\zeta^2;\zeta^2)_k^2(-\zeta^4;\zeta^4)_k^2}
=0
\end{align*}
for any (odd) divisor $d>1$ of $n$, and this leads\,---\,in exactly the same way
as in the proof of Theorem~\ref{th:baby1}\,---\,to the divisibility of the truncated $q$-sums by $[n]$.
\end{proof}

With the summands in Theorems \ref{th:baby1} and \ref{th:baby2}, we can associate the $q$-WZ pairs
\begin{align*}
F(n,k) &=(-1)^{n+k}\frac{[6n-2k+1](q;q^2)_{n-k}(-q;q^2)_{n-k}(-q;q^2)_{n+k}}{(q^4;q^4)_{n} (-q^4;q^4)_{n}(-q^4;q^4)_{n-k}}, \\
G(n,k) &=\frac{(-1)^{n+k}(q;q^2)_{n-k}(-q;q^2)_{n-k}(-q;q^2)_{n+k-1}}{(1-q)(q^4;q^4)_{n-1}(-q^4;q^4)_{n-1}(-q^4;q^4)_{n-k}},
\end{align*}
and
\begin{equation}
\begin{aligned}
\wt F(n,k)
&=\frac{q^{(n-k)^2}[6n-2k+1](q^2;q^4)_{n}(-q;q^2)_{n-k}(-q;q^2)_{n+k}}{(q^4;q^4)_{n}(-q^4;q^4)_{n}(-q^4;q^4)_{n-k}(-q^2;q^4)_k},
\\
\wt G(n,k)
&=\frac{q^{(n-k)^2}(q^2;q^4)_n (-q;q^2)_{n-k}(-q;q^2)_{n+k-1}}{(1-q)(q^4;q^4)_{n-1}(-q^4;q^4)_{n-1} (-q^4;q^4)_{n-k}(-q^2;q^4)_k},
\end{aligned}
\label{second-WZ-pair}
\end{equation}
respectively, where the convention $1/(q^4;q^4)_m=0$ for negative integers $m$ is applied
(and justified by the extended definition
$$
(a;q)_{-n}=\frac{(-1)^na^nq^{n(n+1)/2}}{(q/a;q)_n}
\quad\text{for}\; n=1,2,\dots
$$
of the $q$-Pochhammer symbol).
However, we do not see a way to use the pairs for proving the required congruences.
Let us illustrate the difficulty in the example of the $q$-WZ pair \eqref{second-WZ-pair}, which satisfies
$$
\wt F(n,k-1)-\wt F(n,k)=\wt G(n+1,k)-\wt G(n,k),
$$
and the related congruence \eqref{eq:11}. For an odd integer $m>1$, summing the last equality
over $n=0,1,\dots,(m-1)/2$ we obtain, via telescoping,
$$
\sum_{n=0}^{(m-1)/2}F(n,k-1)-\sum_{n=0}^{(m-1)/2}F(n,k)
=G\Bigl(\frac{m+1}{2},k\Bigr)\equiv0\pmod{[m]}
$$
for any integer $k$, so that
$$
\sum_{n=0}^{(m-1)/2}F(n,0)
\equiv\sum_{n=0}^{(m-1)/2}F(n,1)
\equiv\dots
\equiv\sum_{n=0}^{(m-1)/2}F(n,k)
\pmod{[m]}.
$$
At the same time, there seems to be no natural choice of $k$, for which
$$
\sum_{n=0}^{(m-1)/2}F(n,k)
\equiv0\pmod{[m]}
$$
follows. The same obstacles occur for the remaining congruences in Theorems~\ref{th:baby1} and~\ref{th:baby2}.

\section{$q$-Supercongruences of Ramanujan type}
\label{s3}

We recall that the $q$-analogue \eqref{q4} of Ramanujan's formula \eqref{ram1} is established in \cite{GZ18} on the basis of
\begin{align}
&
\sum_{k=0}^\infty\frac{(1-acq^{4k})(a;q)_k(q/a;q)_k (ac;q)_{2k}}
{(1-ac)(cq^3;q^3)_k(a^2cq^2;q^3)_k(q;q)_{2k}}\,q^{k^2}
\nonumber\\ &\qquad
=\frac{(acq^2;q^3)_\infty(acq^3;q^3)_\infty(aq;q^3)_\infty(q^2/a;q^3)_\infty}
{(q;q^3)_\infty(q^2;q^3)_\infty(a^2cq^2;q^3)_\infty(cq^3;q^3)_\infty}.
\label{cubic-tr}
\end{align}
Replace $q$ with $q^2$, take $c=q/a$ and then $aq$ for $a$:
\begin{align}
&
\sum_{k=0}^\infty[8k+1]\frac{(aq;q^2)_k(q/a;q^2)_k (q;q^2)_{2k}}
{(q^2;q^2)_{2k}(aq^6;q^6)_k(q^6/a;q^6)_k}\,q^{2k^2}
\nonumber\\ &\qquad
=\frac{(q^5;q^6)_\infty(q^7;q^6)_\infty(aq^3;q^6)_\infty(q^3/a;q^6)_\infty}
{(q^2;q^6)_\infty(q^4;q^6)_\infty(aq^6;q^6)_\infty(q^6/a;q^6)_\infty}.
\label{q4t}
\end{align}
Observe that the truncated $q$-sums in Theorem~\ref{thm:8k+1-a} correspond to the left-hand side of \eqref{q4t}.
Furthermore,
\begin{equation}
\frac{(q^5;q^6)_\infty(q^7;q^6)_\infty}{(q^2;q^6)_\infty(q^4;q^6)_\infty}
=\frac{(q;q^2)_\infty(q^6;q^6)_\infty}{(1-q)\,(q^2;q^2)_\infty(q^3;q^6)_\infty}.
\label{eq:omega3}
\end{equation}

\begin{lemma}
\label{lem-new}
Let $n$ be a positive odd integer. Then
\begin{equation}
\sum_{k=0}^{(n-1)/2}\frac{(q^{1-n};q^2)_k (q^{1+n};q^2)_k (q;q^2)_{2k}}{(q^2;q^2)_{2k}(q^{6-n};q^6)_k (q^{6+n};q^6)_k }[8k+1]q^{2k^2}
=q^{-(n-1)/2}[n]\biggl(\frac{-3}{n}\biggr).
\label{eq:lem3.1}
\end{equation}
\end{lemma}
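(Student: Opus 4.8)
The plan is to recognize \eqref{eq:lem3.1} as the specialization $a=q^{-n}$ (equivalently $a=q^n$, by the symmetry $a\leftrightarrow1/a$ of the summand) of the infinite identity \eqref{q4t}. First I would observe that at $a=q^{-n}$ the $k$-th summand on the left of \eqref{q4t} acquires in its numerator the factor
$$
(aq;q^2)_k\big|_{a=q^{-n}}=(q^{1-n};q^2)_k=\prod_{j=0}^{k-1}(1-q^{1-n+2j}),
$$
which contains the vanishing term $1-q^{1-n+2\cdot(n-1)/2}=1-q^0=0$ as soon as $k>(n-1)/2$. Hence the series on the left of \eqref{q4t} collapses to a finite sum over $0\le k\le(n-1)/2$, and this finite sum is precisely the left-hand side of \eqref{eq:lem3.1}. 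I would also check that none of the denominator factors $(q^{6-n};q^6)_k$, $(q^{6+n};q^6)_k$ vanishes in this range (this would require $n$ divisible by $6$, impossible for odd $n$), and that the product on the right of \eqref{q4t} has no pole at $a=q^{-n}$, so that the two sides of \eqref{q4t} may be equated at $a=q^{-n}$ by continuity in $a$.

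It then remains to evaluate the right-hand side of \eqref{q4t} at $a=q^{-n}$, namely
$$
\frac{(q^5;q^6)_\infty(q^7;q^6)_\infty(q^{3-n};q^6)_\infty(q^{3+n};q^6)_\infty}{(q^2;q^6)_\infty(q^4;q^6)_\infty(q^{6-n};q^6)_\infty(q^{6+n};q^6)_\infty},
$$
and to identify it with $q^{-(n-1)/2}[n]\bigl(\frac{-3}{n}\bigr)$. My approach here is to reduce every infinite product $(q^a;q^6)_\infty$ to its canonical representative $(q^r;q^6)_\infty$, where $r\in\{1,\dots,6\}$ is the residue of $a$ modulo $6$, at the cost of an explicit finite product of factors $1-q^{\bullet}$, using $(q^{r+6m};q^6)_\infty=(q^r;q^6)_\infty/\prod_{j=0}^{m-1}(1-q^{r+6j})$ for positive shifts and $(q^{r-6m};q^6)_\infty=(q^r;q^6)_\infty\prod_{j=1}^{m}(1-q^{r-6j})$ for negative ones. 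After this reduction all the genuinely infinite products cancel, and one is left with a ratio of finite products whose exponents I would make uniform by rewriting each negative-exponent factor as $1-q^{-s}=-q^{-s}(1-q^{s})$.

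Because the residues of $3\pm n$ and $6\pm n$ modulo $6$ depend on $n\bmod6$, the evaluation naturally splits into the cases $n\equiv1$ and $n\equiv5\pmod6$, the case $3\mid n$ being trivial: there the factor $(q^{3-n};q^6)_\infty$ already hits a zero at exponent $0$, matching the vanishing Jacobi symbol on the right of \eqref{eq:lem3.1}. In the case $n=6m+1$, for instance, I expect the cancellations to leave exactly
$$
\frac1{1-q}\cdot q^{-3m}\,(1-q^{6m+1})=q^{-3m}[6m+1]=q^{-(n-1)/2}[n],
$$
with the two signs $(-1)^m$ from the negative-exponent factors cancelling and $\bigl(\frac{-3}{n}\bigr)=+1$; the case $n=6m+5$ proceeds identically, the surviving power of $q$ becoming $q^{-(3m+2)}=q^{-(n-1)/2}$ and an uncancelled sign producing $\bigl(\frac{-3}{n}\bigr)=-1$. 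The identity \eqref{eq:omega3} is convenient for handling the $n$-independent factors $(q^5;q^6)_\infty(q^7;q^6)_\infty/((q^2;q^6)_\infty(q^4;q^6)_\infty)$ uniformly. The main obstacle is purely bookkeeping: correctly counting the negative-exponent factors in each residue class so that the powers of $q$ assemble into $q^{-(n-1)/2}$ and the accumulated signs reproduce the character $\bigl(\frac{-3}{n}\bigr)$; once the two residue cases are organized in parallel, the remaining manipulations are routine telescoping of $q$-products.
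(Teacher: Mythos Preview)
Your proposal is correct and follows essentially the same route as the paper: specialize $a=q^{\pm n}$ in \eqref{q4t} so that the factor $(q^{1-n};q^2)_k$ truncates the sum at $k=(n-1)/2$, and then evaluate the resulting infinite product on the right by cases according to $n\bmod 3$. The paper's own proof is terser\,---\,it simply asserts that the right-hand side becomes $q^{-(n-1)/2}[n]$, $-q^{-(n-1)/2}[n]$, or $0$ according as $n\equiv1,2,0\pmod3$\,---\,whereas you spell out the shift-and-cancel bookkeeping that justifies this; but the underlying argument is identical.
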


\begin{proof}
We substitute $a=q^{n}$ into \eqref{q4t}.
Then the left-hand side of \eqref{q4t} terminates at $k=(n-1)/2$; therefore, it is exactly
the left-hand side of \eqref{eq:lem3.1}, while the right-hand side of \eqref{q4t} becomes
$q^{-(n-1)/2}[n]$ if $n\equiv1\pmod 3$, $-q^{-(n-1)/2}[n]$ if $n\equiv2\pmod 3$, and 0 if $3\mid n$.
\end{proof}

\begin{lemma}
\label{lem-other}
Let $n$ be an integer with $n>1$ and $(n,6)=1$. Then
\begin{equation*}
\sum_{k=0}^{(n-1)/2}\frac{1-q^{1-n+8k}}{1-q^{1-n}}\,\frac{(aq;q^2)_k(q/a;q^2)_k (q^{1-n};q^2)_{2k}}
{(q^2;q^2)_{2k}(aq^{6-n};q^6)_k(q^{6-n}/a;q^6)_k}\,q^{2k^2}
=0.
\end{equation*}
\end{lemma}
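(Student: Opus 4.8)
The plan is to realize the left-hand side as a \emph{terminating} specialization of the cubic transformation \eqref{cubic-tr}, chosen so that its product side vanishes identically. Concretely, I would follow exactly the route that produced \eqref{q4t}, except feeding in a different value of the free parameter $c$: in \eqref{cubic-tr} replace $q$ by $q^2$, take $c=q^{1-n}/a$ in place of $c=q/a$, and then substitute $aq$ for $a$. (Thus \eqref{q4t} is the $n=0$ instance of this family.) Tracking the Pochhammer symbols, the factor $(ac;q^2)_{2k}$ becomes $(q^{1-n};q^2)_{2k}$, the balancing factor $(1-acq^{8k})/(1-ac)$ becomes $(1-q^{1-n+8k})/(1-q^{1-n})$, while $(a;q^2)_k$ and $(q^2/a;q^2)_k$ turn into $(aq;q^2)_k$ and $(q/a;q^2)_k$, and the two denominator products $(cq^6;q^6)_k$, $(a^2cq^4;q^6)_k$ turn into $(q^{6-n}/a;q^6)_k$, $(aq^{6-n};q^6)_k$. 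This reproduces precisely the summand in the statement.

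Next I would argue that the a priori infinite series on the left is in fact the truncated sum displayed. Since $n$ is odd, the numerator factor $(q^{1-n};q^2)_{2k}=\prod_{j=0}^{2k-1}(1-q^{1-n+2j})$ contains the zero term $1-q^0$ at $j=(n-1)/2$ as soon as $2k-1\ge(n-1)/2$, i.e.\ for $k\ge(n+1)/4$; as $(n+1)/4\le(n-1)/2$ whenever $n\ge3$, every term with $k>(n-1)/2$ already vanishes, so the truncation at $(n-1)/2$ coincides with the full terminating series. Here the hypothesis $n>1$ is exactly what keeps the $k=0$ balancing factor $(1-q^{1-n})^{-1}(1-q^{1-n+8k})$ from degenerating into the indeterminate $0/0$.

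It then remains to evaluate the product side after the same specialization. Carrying the substitutions through the right-hand side of \eqref{cubic-tr} yields
$$
\frac{(q^{5-n};q^6)_\infty(q^{7-n};q^6)_\infty(aq^3;q^6)_\infty(q^3/a;q^6)_\infty}{(q^2;q^6)_\infty(q^4;q^6)_\infty(aq^{6-n};q^6)_\infty(q^{6-n}/a;q^6)_\infty},
$$
and the crux is to show this vanishes. Because $(n,6)=1$ and $n>1$, exactly one of $n\equiv1$ or $n\equiv5\pmod 6$ holds. In the first case $7-n\equiv0\pmod6$ with $n\ge7$, so $(q^{7-n};q^6)_\infty$ acquires the factor $1-q^0=0$; in the second case $5-n\equiv0\pmod6$ with $n\ge5$, so $(q^{5-n};q^6)_\infty$ acquires the factor $1-q^0=0$. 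In either case the numerator is $0$, whereas, $a$ being an indeterminate and $|q|<1$, the four denominator products are nonzero; hence the product equals $0$, and the lemma follows (as an identity of rational functions, by the identity theorem applied to the finite left-hand sum).

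I expect the main difficulty to be bookkeeping rather than conceptual: one must confirm that the seemingly non-uniform changes in the summand all arise from the single clean choice $c=q^{1-n}/a$, and that the residual terms between $k=(n+1)/4$ and $k=(n-1)/2$ genuinely vanish so that truncating at $(n-1)/2$ loses nothing. A secondary point worth verifying is that the specialization introduces no spurious poles on either side, so that the terminating identity is a legitimate limit of \eqref{cubic-tr}; this is immediate since the denominator products $(aq^{6-n};q^6)_k$ and $(q^{6-n}/a;q^6)_k$ remain invertible for generic $a$. The vanishing of the product side is then the short residue-modulo-$6$ analysis above, which is where the arithmetic hypotheses $n>1$ and $(n,6)=1$ enter decisively.
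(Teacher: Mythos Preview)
Your proposal is correct and follows essentially the same approach as the paper: specialize the cubic transformation \eqref{cubic-tr} via $q\to q^2$, $c=q^{1-n}/a$, $a\to aq$, observe that the left-hand side terminates (at $k=\lfloor n/4\rfloor$, hence a fortiori at $(n-1)/2$) because of the factor $(q^{1-n};q^2)_{2k}$, and that the right-hand side vanishes thanks to $(q^{5-n};q^6)_\infty(q^{7-n};q^6)_\infty$ in the numerator. Your write-up simply fills in more of the bookkeeping (the mod-$6$ case split and the role of the hypothesis $n>1$) than the paper's terse paragraph does.
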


\begin{proof}
We specialise \eqref{cubic-tr} by taking $q^2$ for $q$, then $c=q^{1-n}/a$ and replacing $a$ with $aq$.
The right-hand side of the resulting identity vanishes, because of the factors $(q^{5-n};q^6)_\infty$ and $(q^{7-n};q^6)_\infty$
in the numerator, while the left-hand side terminates at $k=(n-1)/2$ (in fact, even earlier, at $k=\lfloor n/4\rfloor$), since the summand involves $(q^{1-n};q^2)_{2k}$.
\end{proof}

\begin{proof}[Proof of Theorem~\textup{\ref{thm:8k+1-a}}]
Let $\zeta\neq 1$ be a primitive $d$-th root of unity, where $d\mid n$ and $n>1$ is coprime with 6.
Denote by
$$
c_q(k)=[8k+1]\frac{(aq;q^2)_k(q/a;q^2)_k (q;q^2)_{2k}}{(q^2;q^2)_{2k}(aq^6;q^6)_k(q^6/a;q^6)_k}\,q^{2k^2}
$$
the $k$-th term of the sum \eqref{q4t}. Use \eqref{eq:omega3} to write the identity in \eqref{q4t} in the form
\begin{equation}
\sum_{\ell=0}^{\infty}c_q(\ell d)\sum_{k=0}^{d-1}\frac{c_q(\ell d+k)}{c_q(\ell d)}
=\frac{(q;q^2)_\infty(q^6;q^6)_\infty(aq^3;q^6)_\infty(q^3/a;q^6)_\infty}
{(1-q)\,(q^2;q^2)_\infty(q^3;q^6)_\infty(aq^6;q^6)_\infty(q^6/a;q^6)_\infty}.
\label{q4u}
\end{equation}
Consider the limit as $q\to\zeta $ radially.
On the left-hand side we get
$$
\lim_{q\to\zeta}\frac{c_q(\ell d+k)}{c_q(\ell d)}
=\frac{c_\zeta(\ell d+k)}{c_\zeta(\ell d)}
=c_\zeta(k)
$$
and, since $(q;q^2)_{2k}/(q^2;q^2)_{2k}={4k\brack 2k}_{q^2}/(-q;q)_{4k}$, by Lemma \ref{prop:root} and the reduction formulas \eqref{eq:zeta-sim},
$$
\lim_{q\to\zeta}c_q(\ell d)
=\frac1{2^{4\ell}}\binom{4\ell}{2\ell}
\frac{(a\zeta;\zeta^2)_{\ell d}(\zeta/a;\zeta^2)_{\ell d}}{(a\zeta^6;\zeta^6)_{\ell d}(\zeta^6/a;\zeta^6)_{\ell d}}
=\frac1{2^{4\ell}}\binom{4\ell}{2\ell}.
$$
By Stirling's approximation, $\binom{4\ell}{2\ell}\sim2^{4\ell}/\sqrt{2\pi\ell}$ as $\ell\to\infty$, hence
$$
\sum_{\ell=0}^\infty\frac1{2^{4\ell}}\binom{4\ell}{2\ell}=\infty.
$$
For the right-hand side of \eqref{q4u},
\begin{align*}
&\lim_{q\to\zeta}\frac{(q;q^2)_{\ell d+k}(q^6;q^6)_{\ell d+k}(aq^3;q^6)_{\ell d+k}(q^3/a;q^6)_{\ell d+k}}
{(1-q)\,(q^2;q^2)_{\ell d+k}(q^3;q^6)_{\ell d+k}(aq^6;q^6)_{\ell d+k}(q^6/a;q^6)_{\ell d+k}}
\\
&\qquad
=\frac{(\zeta;\zeta^2)_k(\zeta^6;\zeta^6)_k(a\zeta^3;\zeta^6)_k(\zeta^3/a;\zeta^6)_k}
{(1-\zeta)\,(\zeta^2;\zeta^2)_k(\zeta^3;\zeta^6)_k(a\zeta^6;\zeta^6)_k(\zeta^6/a;\zeta^6)_k}
\end{align*}
for any $\ell\ge0$ and $0\le k<d$. Therefore, the expression on the right-hand side of \eqref{q4u}
is bounded above by
$$
\max_{0\le k<d}\frac{|(\zeta;\zeta^2)_k(\zeta^6;\zeta^6)_k(a\zeta^3;\zeta^6)_k(\zeta^3/a;\zeta^6)_k|}
{|(1-\zeta)\,(\zeta^2;\zeta^2)_k(\zeta^3;\zeta^6)_k(a\zeta^6;\zeta^6)_k(\zeta^6/a;\zeta^6)_k|}+1
$$
as $q\to\zeta$. As in the proof of Theorem~\ref{th:baby2} we conclude that
\begin{equation}
\sum_{k=0}^{d-1}c_\zeta(k)=0,
\label{sum1}
\end{equation}
while Lemma~\ref{lem-other} applied for $d$ in place of $n$ after the substitution $q=\zeta$ results in
\begin{equation}
\sum_{k=0}^{(d-1)/2}c_\zeta(k)=0.
\label{sum1/2}
\end{equation}
These equalities imply
$$
\sum_{k=0}^{n-1}c_\zeta(k)=\sum_{k=0}^{(n-1)/2}c_\zeta(k)=0
$$
for any $d$-th primitive root of unity $\zeta$ with $d\mid n$ and $d>1$, so that
\begin{equation}
\sum_{k=0}^{m}\frac{(aq;q^2)_k (q/a;q^2)_k (q;q^2)_{2k}}{(q^2;q^2)_{2k}(aq^6;q^6)_k (q^6/a;q^6)_k }[8k+1]q^{2k^2}
\equiv0\equiv q^{-(n-1)/2}[n]\biggl(\frac{-3}{n}\biggr) \pmod{[n]}
\label{any-a}
\end{equation}
for both $m=n-1$ and $m=(n-1)/2$.
Furthermore, it follows from Lemma~\ref{lem-new} that
$$
\sum_{k=0}^m\frac{(aq;q^2)_k (q/a;q^2)_k (q;q^2)_{2k}}{(q^2;q^2)_{2k}(aq^6;q^6)_k (q^6/a;q^6)_k }[8k+1]q^{2k^2}
=q^{-(n-1)/2}[n]\biggl(\frac{-3}{n}\biggr)
$$
when $a=q^n$ or $a=q^{-n}$, again for both $m=n-1$ and $m=(n-1)/2$.
This implies that the congruences
$$
\sum_{k=0}^m\frac{(aq;q^2)_k (q/a;q^2)_k (q;q^2)_{2k}}{(q^2;q^2)_{2k}(aq^6;q^6)_k (q^6/a;q^6)_k }[8k+1]q^{2k^2}
\equiv q^{-(n-1)/2}[n]\biggl(\frac{-3}{n}\biggr)
$$
hold modulo $1-aq^n$ and $a-q^n$.
Since $[n]$, $1-aq^n$ and $a-q^n$ are relatively prime polynomials, we obtain \eqref{q4a-new} and \eqref{q4b-new}.
\end{proof}

Notice that our proofs of equations \eqref{sum1} and \eqref{sum1/2} are based on two different arguments\,---\,somewhat
less uniform compared to the earlier derivation of similar equalities in the proofs of Theorems~\ref{th:baby1} and~\ref{th:baby2}. We also stress on the fact that the congruences \eqref{any-a} obtained are valid for \emph{any} $a$,
including $a=1$:
\begin{equation}
\sum_{k=0}^{m}\frac{(q;q^2)_k^2 (q;q^2)_{2k}}{(q^2;q^2)_{2k} (q^6;q^6)_k^2 }[8k+1]q^{2k^2}
\equiv0\equiv q^{-(n-1)/2}[n]\biggl(\frac{-3}{n}\biggr) \pmod{[n]}
\label{one-a}
\end{equation}
hold for $m=n-1$ and $m=(n-1)/2$.

\begin{proof}[Proof of Theorem \textup{\ref{thm:8k+1}}]
The denominators of \eqref{q4a-new} and \eqref{q4b-new} related to $a$
are the factors $(aq^6;q^6)_{n-1}(q^6/a;q^6)_{n-1}$ and $(aq^6;q^6)_{(n-1)/2}(q^6/a;q^6)_{(n-1)/2}$, respectively;
their limits as $a\to1$ are relatively prime to $\Phi_n(q)$, since $n$ is coprime with~$6$.
On the other hand, the limit of $(1-aq^n)(a-q^n)$ as $a\to1$ has the factor $\Phi_n(q)^2$.
Thus, letting $a\to1$ in \eqref{q4a-new} and \eqref{q4b-new}, we see that \eqref{q4a} and \eqref{q4b} are true modulo $\Phi_n(q)^3$. At the same time, from \eqref{one-a} they are also valid modulo $[n]$.
This completes the proof of~\eqref{q4a} and~\eqref{q4b}.
\end{proof}

\section{More $q$-supercongruences}
\label{s4}

Throughout this section, $m$ always stands for $n-1$ or $(n-1)/2$. We shall give  generalizations of
some known $q$-supercongruences and also confirm some conjectures on $q$-analogues of Ramanujan-type supercongruences in \cite{Guo1, Guo2, Guo3, Guo4, Guo2018}.

\subsection{Two congruences of Van Hamme}
\label{s4.1}
We start with the following $q$-super\-cong\-ruence from \cite[Theorem 1.2]{Guo2018}:
\begin{align}
\sum_{k=0}^{(n-1)/2}(-1)^k q^{k^2}[4k+1]\frac{(q;q^2)_k^3}{(q^2;q^2)_k^3}
\equiv q^{(n-1)^2/4}[n] (-1)^{(n-1)/2}\pmod{[n]\Phi_n(q)^2}
\label{eq:q-Guo2018-1}
\end{align}
for odd $n$, which is a $q$-analogue of the (B.2) supercongruence of Van Hamme \cite{Hamme}.
Along the same lines as Theorem \ref{thm:8k+1-a}, we have the following generalization of \eqref{eq:q-Guo2018-1}.

\begin{theorem}
\label{th:4.1}
Let $n$ be a positive odd integer. Then, modulo $[n](1-aq^n)(a-q^n)$,
\begin{equation}
\sum_{k=0}^{m}(-1)^k q^{k^2}[4k+1]\frac{(aq;q^2)_k(q/a;q^2)_k(q;q^2)_k}{(aq^2;q^2)_k(q^2/a;q^2)_k(q^2;q^2)_k}
\equiv q^{(n-1)^2/4}[n] (-1)^{(n-1)/2}.
\label{eq:q-Guo2018-2}
\end{equation}
\end{theorem}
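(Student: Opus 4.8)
The plan is to transplant the creative-microscoping argument of Theorem~\ref{thm:8k+1-a} to the present summand, carrying the free parameter $a$ throughout. Write
$$
c_q(k)=(-1)^k q^{k^2}[4k+1]\frac{(aq;q^2)_k(q/a;q^2)_k(q;q^2)_k}{(aq^2;q^2)_k(q^2/a;q^2)_k(q^2;q^2)_k}
$$
for the $k$-th term. The starting point, analogous to the identity~\eqref{q4t}, is a closed form for the full sum $\sum_{k=0}^\infty c_q(k)$. This is precisely the very-well-poised ${}_6\phi_5$ summation taken in base $q^2$ with very-well-poised parameter $q$, the two remaining numerator parameters $aq$ and $q/a$, and the last parameter sent to infinity (which manufactures the factor $(-1)^kq^{k^2}$); the outcome is
$$
\sum_{k=0}^\infty c_q(k)=\frac{(q;q^2)_\infty(q^3;q^2)_\infty}{(aq^2;q^2)_\infty(q^2/a;q^2)_\infty}.
$$
I would isolate this as a lemma.

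Next I would mirror the two specializations of the proof of Theorem~\ref{thm:8k+1-a}. The first is the analogue of Lemma~\ref{lem-new}: substitute $a=q^n$ (the case $a=q^{-n}$ following from the $a\leftrightarrow1/a$ symmetry of $c_q(k)$). The factor $(q/a;q^2)_k=(q^{1-n};q^2)_k$ then forces termination at $k=(n-1)/2$, so the left-hand side becomes exactly the truncated sum of~\eqref{eq:q-Guo2018-2}, while the product on the right collapses to a finite ratio of $q$-Pochhammer symbols. Simplifying that ratio — turning the negative powers of $q$ into the sign $(-1)^{(n-1)/2}$ and gathering the remaining powers into $q^{(n-1)^2/4}$ — should yield exactly $q^{(n-1)^2/4}(-1)^{(n-1)/2}[n]$ (for $n=5$, for instance, the ratio is $q^4[5]$, matching the claim). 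This evaluation shows that the difference of the two sides of~\eqref{eq:q-Guo2018-2} vanishes at $a=q^{\pm n}$, hence is divisible by $1-aq^n$ and by $a-q^n$; moreover, for $m=n-1$ the terms with $(n-1)/2<k\le n-1$ vanish at $a=q^{\pm n}$, so both truncations give the same value.

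The second ingredient is divisibility by $[n]$ for generic $a$, the analogue of~\eqref{any-a}. Fix a primitive $d$-th root of unity $\zeta\ne1$ with $d\mid n$ (necessarily odd) and pass to the radial limit $q\to\zeta$, writing the sum as $\sum_{\ell\ge0}c_q(\ell d)\sum_{k=0}^{d-1}c_q(\ell d+k)/c_q(\ell d)$. By Lemma~\ref{prop:root} and the reductions~\eqref{eq:zeta-sim}, the parameter-$a$ Pochhammer symbols cancel in the limit and
$$
\lim_{q\to\zeta}c_q(\ell d)=\frac{(-1)^\ell}{4^\ell}\binom{2\ell}{\ell},
$$
whose sum over $\ell$ is the finite value $1/\sqrt2$. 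Since the numerator factor $(q;q^2)_\infty$ of the product above acquires a zero at $q=\zeta$ while its denominator stays nonzero for generic $a$, the right-hand side tends to $0$; exactly as in the proof of Theorem~\ref{th:baby1}, this forces $\sum_{k=0}^{d-1}c_\zeta(k)=0$. Here the factor $(q;q^2)_k$ in the numerator makes $c_\zeta(k)=0$ cleanly for $(d-1)/2<k\le d-1$ (the denominator $(q^2;q^2)_k$ survives up to $k=d-1$), so the half-range sum coincides with the full one and also vanishes. Running this over every divisor $d>1$ of $n$ gives both truncations $\equiv0\pmod{[n]}$, consistent with $q^{(n-1)^2/4}(-1)^{(n-1)/2}[n]\equiv0\pmod{[n]}$. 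Finally, as $[n]$, $1-aq^n$ and $a-q^n$ are pairwise coprime, combining the three congruences yields~\eqref{eq:q-Guo2018-2}; letting $a\to1$ afterwards recovers~\eqref{eq:q-Guo2018-1}.

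The main obstacle will be the terminating evaluation at $a=q^{\pm n}$: fixing the very-well-poised summation in exactly the normalization that reproduces $(-1)^kq^{k^2}[4k+1]$, and then carrying out the $q$-Pochhammer bookkeeping so that the product simplifies to $q^{(n-1)^2/4}(-1)^{(n-1)/2}[n]$ with the correct sign and power of $q$ for \emph{all} odd $n$, not merely in small cases. By contrast, the root-of-unity divisibility step is cleaner than in Theorem~\ref{thm:8k+1-a}, since the coefficient sum $\sum_\ell c_\zeta(\ell d)$ converges, placing us in the situation of Theorem~\ref{th:baby1} rather than that of Theorem~\ref{th:baby2}, and no analogue of the auxiliary vanishing lemma is needed for the half-range.
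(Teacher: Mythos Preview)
Your proposal is correct and follows essentially the same approach as the paper's own (much terser) sketch: derive the closed form from the very-well-poised ${}_6\phi_5$ summation, specialize $a=q^{\pm n}$ to handle the factors $1-aq^n$ and $a-q^n$, and analyse the radial limits at roots of unity to obtain divisibility by~$[n]$. Your observation that the coefficient sum $\sum_\ell(-1)^\ell\binom{2\ell}{\ell}/4^\ell=1/\sqrt2$ converges, placing this case in the regime of Theorem~\ref{th:baby1} rather than Theorem~\ref{th:baby2}, is exactly right and is a detail the paper leaves implicit.
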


As we have seen in the proof of Theorem \ref{thm:8k+1}, the modulus $[n](1-aq^n)(a-q^n)$ transforms to contain the factor $\Phi_n(q)^3$ as $a\to1$ and from the (sketch of) proof below the congruence \eqref{eq:q-Guo2018-2} is true for $a=1$ modulo~$[n]$.
Therefore, the congruence \eqref{eq:q-Guo2018-2} reduces to \eqref{eq:q-Guo2018-1} when $m=(n-1)/2$
but it also confirms \cite[Conjecture 5.1]{Guo2018} when $m=n-1$, as $a\to1$.

\begin{proof}[Sketch of proof]
The terminating case of the sum of the very-well-poised ${}_6\phi_5$ series,
\begin{multline}
\sum_{k=0}^\infty\frac{(1-aq^{2k})(a;q)_k(b;q)_k(c;q)_k(d;q)_k}
{(1-a)(q;q)_k(aq/b;q)_k(aq/c;q)_k(aq/d;q)_k}\biggl(\frac{aq}{bcd}\biggr)^k
\\
=\frac{(aq;q)_\infty(aq/bc;q)_\infty(aq/bd;q)_\infty(aq/cd;q)_\infty}
{(aq/b;q)_\infty(aq/c;q)_\infty(aq/d;q)_\infty(aq/bcd;q)_\infty},
\label{Eq:6phi5}
\end{multline}
reads
\begin{equation}
\sum_{k=0}^N\frac{(1-aq^{2k})(a;q)_k(b;q)_k(c;q)_k(q^{-N};q)_k}
{(1-a)(q;q)_k(aq/b;q)_k(aq/c;q)_k(aq^{N+1};q)_k}\biggl(\frac{aq^{N+1}}{bc}\biggr)^k
=\frac{(aq;q)_N (aq/bc;q)_N}{(aq/b;q)_N(aq/c;q)_N}
\label{eq:6phi5}
\end{equation}
(see \cite[Appendix, eqs.~(II.20) and (II.21)]{GR04}).
Letting $N\to\infty$, $q\to q^2$, $a=q$ in~\eqref{eq:6phi5}, then $b=aq$ and $c=q/a$ we obtain
$$
\sum_{k=0}^{\infty}(-1)^k q^{k^2}[4k+1]\frac{(aq;q^2)_k(q/a;q^2)_k(q;q^2)_k}{(aq^2;q^2)_k(q^2/a;q^2)_k(q^2;q^2)_k}
=\frac{(q;q^2)_\infty (q^3;q^2)_\infty}{(aq^2;q^2)_\infty (q^2/a;q^2)_\infty}.
$$
Considering the limits of both sides as $q\to\zeta$, a $d$-th root of unity for each $d\mid n$, we see that \eqref{eq:q-Guo2018-2} holds modulo $[n]$, while letting $a=q^n$ we conclude that \eqref{eq:q-Guo2018-2} holds modulo $(1-aq^n)(a-q^n)$.
\end{proof}

In \cite{GuoWang}, the first author and Wang obtained a $q$-analogue of \cite[Theorem 1.1 with $r=1$]{Long}: for odd $n$,
\begin{equation*}
\sum_{k=0}^{(n-1)/2}[4k+1]\frac{(q;q^2)_k^4}{(q^2;q^2)_k^4}
\equiv q^{(1-n)/2}[n]+\frac{(n^2-1)(1-q)^2}{24}\,q^{(1-n)/2}[n]^3 \pmod{[n]\Phi_n(q)^3},
\end{equation*}
which modulo $[n]\Phi_n(q)^2$ corresponds to the following $q$-analogue of the (C.2) supercongruence of Van Hamme \cite{Hamme}:
\begin{equation}
\sum_{k=0}^{(n-1)/2}[4k+1]\frac{(q;q^2)_k^4}{(q^2;q^2)_k^4}
\equiv q^{(1-n)/2}[n] \pmod{[n]\Phi_n(q)^2}.
\label{eq:q-Long-r}
\end{equation}

We have the following two-parameter common generalization of
\eqref{eq:q-Guo2018-2} (corresponding to $c\to 0$) and \eqref{eq:q-Long-r} (corresponding to $c=1$).
\begin{theorem}
\label{th:4.2}
Let $n$ be a positive odd integer. Then, modulo $[n](1-aq^n)(a-q^n)$,
\begin{equation}
\sum_{k=0}^{m}[4k+1]\frac{(aq;q^2)_k (q/a;q^2)_k (q/c;q^2)_k (q;q^2)_k}
{(aq^2;q^2)_k(q^2/a;q^2)_k (cq^2;q^2)_k (q^2;q^2)_k}\,c^k
\equiv\frac{(c/q)^{(n-1)/2} (q^2/c;q^2)_{(n-1)/2}}{(cq^2;q^2)_{(n-1)/2}}\,[n].
\label{eq:q-Long-gen}
\end{equation}
\end{theorem}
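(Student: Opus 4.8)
The plan is to run the creative microscoping of Theorems~\ref{thm:8k+1-a} and~\ref{th:4.1} while keeping the parameter $c$, replacing the cubic transformation by the very-well-poised ${}_6\phi_5$ summation~\eqref{Eq:6phi5} and its terminating form~\eqref{eq:6phi5}. In~\eqref{Eq:6phi5} I would first replace $q$ by $q^2$ and then take its four upper parameters to be $q,\,aq,\,q/a,\,q/c$, where $a$ and $c$ are the parameters of~\eqref{eq:q-Long-gen}. A direct inspection of the resulting four $q$-Pochhammer ratios, together with the fact that the argument of the series collapses to $c^k$, shows that the left-hand side becomes precisely the infinite analogue of the sum in~\eqref{eq:q-Long-gen}; this is the exact counterpart of the substitution $b=aq$, $c=q/a$ in the proof of Theorem~\ref{th:4.1}, the only difference being that the fourth parameter is kept equal to $q/c$ rather than sent to infinity. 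Accordingly, the limit $c\to0$ returns Theorem~\ref{th:4.1} and $c=1$ returns~\eqref{eq:q-Long-r}.

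The congruence modulo $[n]$ would follow the pattern of Theorem~\ref{th:baby1}. For a primitive $d$-th root of unity $\zeta$ with $d\mid n$ and $d>1$, I would write the $q\to\zeta$ limit of the specialised~\eqref{Eq:6phi5} as $\sum_{\ell\ge0}c_\zeta(\ell d)\sum_{k=0}^{d-1}c_\zeta(k)$ and evaluate the block leaders by Lemma~\ref{prop:root} and the reductions~\eqref{eq:zeta-sim}. The $q$-Pochhammer factors in $a$ cancel against their denominator partners at $\zeta$; the combination $(q/c;q^2)_{\ell d}/(cq^2;q^2)_{\ell d}\cdot c^{\ell d}$ tends to $(-1)^\ell$; and, writing $(q;q^2)_{\ell d}/(q^2;q^2)_{\ell d}={\qbin{2\ell d}{\ell d}}_q/(-q;q)_{\ell d}^2$, this tends to $\binom{2\ell}{\ell}/4^\ell$. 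Hence $c_\zeta(\ell d)=(-1)^\ell\binom{2\ell}{\ell}/4^\ell$ and $\sum_{\ell\ge0}c_\zeta(\ell d)=1/\sqrt2$ converges, exactly as for Theorem~\ref{th:4.1}. On the other hand the specialised right-hand side of~\eqref{Eq:6phi5} carries the factor $(q;q^2)_\infty(q^3;q^2)_\infty$, which vanishes to second order as $q\to\zeta$, forcing $\sum_{k=0}^{d-1}c_\zeta(k)=0$. Since the numerator factor $(q;q^2)_k$ makes $c_\zeta(k)=0$ for $(d-1)/2<k\le d-1$, the truncation at $(d-1)/2$ vanishes as well, and the block bookkeeping of Theorem~\ref{th:baby1} upgrades both facts to $\sum_{k=0}^{m}c_\zeta(k)=0$ for $m=n-1$ and $m=(n-1)/2$. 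Ranging over all $d\mid n$ with $d>1$ then yields divisibility by $[n]$, which is the assertion modulo $[n]$, because the right-hand side of~\eqref{eq:q-Long-gen} is a multiple of $[n]$.

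For the factor $(1-aq^n)(a-q^n)$ I would specialise $a=q^{-n}$, and symmetrically $a=q^n$. Then $(aq;q^2)_k=(q^{1-n};q^2)_k$ terminates the sum at $k=(n-1)/2$, turning the left-hand side of~\eqref{eq:q-Long-gen} into a terminating very-well-poised ${}_6\phi_5$; concretely, \eqref{eq:6phi5} applies with base $q^2$ and upper parameters $q,\,q^{1+n},\,q/c,\,q^{1-n}$, the last of which plays the role of $q^{-N}$ with $N=(n-1)/2$. Its closed form equals $\dfrac{(q^3;q^2)_N(cq^{1-n};q^2)_N}{(q^{2-n};q^2)_N(cq^2;q^2)_N}$, and the one genuine computation of the argument is to identify this with the right-hand side of~\eqref{eq:q-Long-gen}. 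I would do so by the elementary reflections $(q^{2-n};q^2)_N=(-1)^Nq^{-N^2}(q;q^2)_N$ and $(cq^{1-n};q^2)_N=(-1)^Nc^Nq^{-N(N+1)}(q^2/c;q^2)_N$, the evaluation $(q^3;q^2)_N/(q;q^2)_N=[n]$, and a collection of the remaining power of $q$, producing the prefactor $(c/q)^{(n-1)/2}(q^2/c;q^2)_{(n-1)/2}/(cq^2;q^2)_{(n-1)/2}$. Finally, as in the proof of Theorem~\ref{thm:8k+1}, the moduli $[n]$, $1-aq^n$ and $a-q^n$ are pairwise coprime and the right-hand side of~\eqref{eq:q-Long-gen} is divisible by $[n]$, so the three partial results assemble into the congruence modulo $[n](1-aq^n)(a-q^n)$. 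The only real obstacle is thus the $c$-dependent $q$-Pochhammer simplification just described; the analysis modulo $[n]$ is essentially that of Theorem~\ref{th:4.1}, since the additional parameter $c$ leaves the block leaders unchanged.
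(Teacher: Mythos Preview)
Your proposal is correct and follows the same two-step template as the paper's sketch: the very-well-poised ${}_6\phi_5$ summation~\eqref{Eq:6phi5} (with $q\mapsto q^2$, $a=q$, and upper parameters $aq,\,q/a,\,q/c$) feeds the radial-limit argument at roots of unity for the factor $[n]$, and its terminating form~\eqref{eq:6phi5} at $a=q^{\pm n}$ handles $(1-aq^n)(a-q^n)$, with the three pairwise coprime moduli assembled at the end. Your block-leader computation $c_\zeta(\ell d)=(-1)^\ell\binom{2\ell}{\ell}/4^\ell$ is correct and yields the convergent value $1/\sqrt2$, which together with the vanishing of the product side (via $(q;q^2)_\infty(q^3;q^2)_\infty$) forces $\sum_{k=0}^{d-1}c_\zeta(k)=0$; this is exactly the mechanism of Theorem~\ref{th:baby1} and is in fact a slightly cleaner variant of the paper's own sketch, which records the leader as $\binom{2\ell}{\ell}/4^\ell$ (dropping the sign) and implicitly invokes the divergent-series/bounded-RHS argument of Theorem~\ref{th:baby2} instead. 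Your reflection identities for $(q^{2-n};q^2)_N$ and $(cq^{1-n};q^2)_N$ and the reduction $(q^3;q^2)_N/(q;q^2)_N=[n]$ are the standard simplification the paper suppresses, and they indeed deliver the right-hand side of~\eqref{eq:q-Long-gen}.
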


\begin{proof}[Sketch of proof]
Taking $q\to q^2$, $a=q$ in~\eqref{Eq:6phi5}, then $c=aq$ and $d=q/a$ we obtain
\begin{multline*}
\sum_{k=0}^\infty[4k+1]\frac{(aq;q^2)_k (q/a;q^2)_k (b;q^2)_k (q;q^2)_k}
{(aq^2;q^2)_k (q^2/a;q^2)_k (q^3/b;q^2)_k (q^2;q^2)_k}\biggl(\frac{q}{b}\biggr)^k
\\
=\frac{(q^3;q^2)_\infty(q;q^2)_\infty(q^2/ab;q^2)_\infty(aq^2/b;q^2)_\infty}
{(aq^2;q^2)_\infty(q^2/a;q^2)_\infty(q^3/b;q^2)_\infty(q/b;q^2)_\infty}.
\end{multline*}
For a root of unity $\zeta$ of odd degree $d\mid n$, the limit of the right-hand side is 0 as $q\to\zeta$,
because of the presence of the factor $(q;q^2)_\infty$. Letting $q$ tend to $\zeta$ on the left-hand side results in
\begin{align*}
&
\sum_{\ell=0}^\infty \frac{(b;\zeta^2)_{\ell d}}{b^{\ell d}(\zeta^3/b;\zeta^2)_{\ell d}}\binom{2\ell}{\ell}\frac1{4^\ell}
\sum_{k=0}^{d-1}[4k+1]_\zeta\frac{(a\zeta;\zeta^2)_k (\zeta/a;\zeta^2)_k (b;\zeta^2)_k (\zeta;\zeta^2)_k}
{(a\zeta^2;\zeta^2)_k (\zeta^2/a;\zeta^2)_k (\zeta^3/b;\zeta^2)_k (\zeta^2;\zeta^2)_k}\biggl(\frac{\zeta}{b}\biggr)^k
\\ &\quad
=\sum_{\ell=0}^\infty \binom{2\ell}{\ell}\frac1{4^\ell}
\sum_{k=0}^{(d-1)/2}[4k+1]_\zeta\frac{(a\zeta;\zeta^2)_k (\zeta/a;\zeta^2)_k (b;\zeta^2)_k (\zeta;\zeta^2)_k}
{(a\zeta^2;\zeta^2)_k (\zeta^2/a;\zeta^2)_k (\zeta^3/b;\zeta^2)_k (\zeta^2;\zeta^2)_k}\biggl(\frac{\zeta}{b}\biggr)^k
\end{align*}
implying
$$
\sum_{k=0}^{m}[4k+1]\frac{(aq;q^2)_k (q/a;q^2)_k (b;q^2)_k (q;q^2)_k}
{(aq^2;q^2)_k (q^2/a;q^2)_k (q^3/b;q^2)_k (q^2;q^2)_k}\biggl(\frac{q}{b}\biggr)^k
\equiv0\pmod{[n]}
$$
for any $b\ne0$, in particular, for $b=q/c$.

Finally, the congruence \eqref{eq:q-Long-gen} modulo $1-aq^n$ and $a-q^n$ follows from the summation
\begin{equation*}
\sum_{k=0}^m[4k+1]\frac{(q^{1-n};q^2)_k  (q^{1+n};q^2)_k (q;q^2)_k (q/c;q^2)_k}
{(q^{2-n};q^2)_k (q^{2+n};q^2)_k (q^2;q^2)_k (cq^2;q^2)_k}\,c^k
=\frac{(c/q)^{(n-1)/2} (q^2/c;q^2)_{(n-1)/2}}{(cq^2;q^2)_{(n-1)/2}}\,[n],
\end{equation*}
which is the specialization $q\to q^2$, $a=q$, $c=q^{1+n}$, $N=(n-1)/2$ and $b=q/c$ of~\eqref{eq:6phi5}.
\end{proof}

\subsection{Another two congruences from Van Hamme's list}
\label{s4.2}
The following $q$-super\-congruence conjectured in \cite[eqs. (1.4) and (1.5)]{Guo1} is a partial $q$-analogue of the (J.2) supercongruence of Van Hamme \cite{Hamme}:
\begin{equation}
\sum_{k=0}^{m}q^{k^2}[6k+1]\frac{(q;q^2)_k^2 (q^2;q^4)_k }{(q^4;q^4)_k^3}
\equiv (-q)^{(1-n)/2}[n] \pmod{[n]\Phi_n(q)^2}.
\label{eq:guo1-1}
\end{equation}
It is established modulo $[n]\Phi_n(q)$ in \cite[Theorem 1.3]{Guo1}.
Here we confirm \eqref{eq:guo1-1} by showing the following more general form, which is also a generalization of Theorem~\ref{th:baby2}.

\begin{theorem}
\label{thm:q-Hamme-J2}
Let $n$ be a positive odd integer. Then, modulo $[n](1-aq^n)(a-q^n)$,
\begin{equation*}
\sum_{k=0}^{m}q^{k^2}[6k+1]\frac{(aq;q^2)_k (q/a;q^2)_k (q^2;q^4)_k }{(aq^4;q^4)_k(q^4/a;q^4)_k(q^4;q^4)_k} \equiv (-q)^{(1-n)/2}[n].
\end{equation*}
\end{theorem}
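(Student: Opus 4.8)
The plan is to follow the creative-microscoping scheme of Theorems~\ref{thm:8k+1-a}, \ref{th:4.1} and~\ref{th:4.2}. Since the three polynomials $[n]$, $1-aq^n$ and $a-q^n$ are pairwise coprime, it suffices to prove the asserted congruence separately modulo each of them and then combine. The right-hand side $(-q)^{(1-n)/2}[n]$ is independent of $a$ and already divisible by $[n]$, so the modulus $[n]$ will be treated by a radial root-of-unity asymptotic, while the moduli $1-aq^n$ and $a-q^n$ will be treated by specialising $a=q^{-n}$ and $a=q^{n}$, where the sum terminates.

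First I would secure the $a$-parametrised infinite identity behind the statement, namely the one-parameter refinement of~\eqref{eq:inf-1} whose $k$-th term is
$$
c_q(k)=q^{k^2}[6k+1]\frac{(aq;q^2)_k (q/a;q^2)_k (q^2;q^4)_k}{(aq^4;q^4)_k(q^4/a;q^4)_k(q^4;q^4)_k},
$$
which collapses to the summand of~\eqref{eq:inf-1} at $a=-1$ and to that of~\eqref{eq:guo1-1} at $a=1$. Such an identity is produced, as in~\cite{GZ18}, by reinstating the free parameter $a$ in the quadratic-transformation derivation of~\eqref{eq:inf-1} (which rests on~\cite{Ra93}). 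The property I need of its closed form is that, at a primitive $d$-th root of unity $\zeta\ne1$ with $d$ odd, the right-hand side stays bounded as $q\to\zeta$, paralleling the estimate for~\eqref{eq:inf-3} in the proof of Theorem~\ref{th:baby2}; note that only the oddness of $d$, not coprimality with~$6$, is used, since all bases occurring are $q^2$ and~$q^4$.

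For the modulus $[n]$, fix such a $\zeta$ with $d\mid n$ and write the series by residues modulo $d$ as $\sum_{\ell\ge0}c_q(\ell d)\sum_{k=0}^{d-1}c_q(\ell d+k)/c_q(\ell d)$. By Lemma~\ref{prop:root} together with the reductions~\eqref{eq:zeta-sim}, the $a$-dependent factors telescope to~$1$ along the diagonal and $c_\zeta(\ell d)=\binom{2\ell}{\ell}/4^\ell$, exactly the value found for Theorem~\ref{th:baby2}; hence the outer series diverges, and since the right-hand side is bounded the inner sum $\sum_{k=0}^{d-1}c_\zeta(k)$ must vanish. Moreover the numerator factor $(q^2;q^4)_k$ vanishes at $\zeta$ for $(d-1)/2<k\le d-1$ — its first root there occurring at $k=(d+1)/2$ — while none of the denominator factors is zero for $k\le d-1$, so $c_\zeta(k)=0$ on that range and the half-range sum equals the full inner sum, again vanishing. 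Summing over residues precisely as in the proof of Theorem~\ref{th:baby2} gives $\sum_{k=0}^{m}c_\zeta(k)=0$ for both $m=n-1$ and $m=(n-1)/2$ and every divisor $d>1$ of $n$, whence the truncations are divisible by $\prod_{d\mid n,\,d>1}\Phi_d(q)=[n]$.

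For the moduli $1-aq^n$ and $a-q^n$, I would set $a=q^{-n}$ and $a=q^{n}$: then $(aq;q^2)_k$, respectively $(q/a;q^2)_k$, becomes $(q^{1-n};q^2)_k$ and vanishes for $k>(n-1)/2$, so the sum terminates at $k=(n-1)/2$ and the values for $m=n-1$ and $m=(n-1)/2$ coincide. The closed-form evaluation required is the analogue of Lemma~\ref{lem-new}: the terminating specialisation of the master identity at $a=q^{\pm n}$ should equal $(-q)^{(1-n)/2}[n]$. Establishing this evaluation — producing the master identity with an explicit right-hand side and checking that its value at $a=q^{\pm n}$ simplifies to exactly $(-q)^{(1-n)/2}[n]$, with the correct power of $q$ and the sign $(-1)^{(n-1)/2}$ — is the step I expect to be the main obstacle. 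Once it is in hand, the congruences modulo $1-aq^n$ and $a-q^n$ follow, and combining them with the divisibility by $[n]$ over the pairwise coprime polynomials completes the proof, exactly as at the end of the proof of Theorem~\ref{thm:8k+1-a}.
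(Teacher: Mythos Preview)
Your proposal is correct and follows essentially the same route as the paper: it too splits the congruence along the coprime factors $[n]$, $1-aq^n$, $a-q^n$, uses the radial-limit argument of Theorem~\ref{th:baby2} (diverging $\sum_\ell\binom{2\ell}{\ell}/4^\ell$ against a bounded closed form) for the $[n]$ part, and terminates the sum at $a=q^{\pm n}$ for the other two factors. The one place where the paper is more concrete is the ``main obstacle'' you flag: the master identity is obtained explicitly by taking $q\to q^2$, $a=q$, $d=aq$, $b=q^2$ in Rahman's quadratic summation~\eqref{quadratic}, giving
\[
\sum_{k=0}^{\infty}q^{k^2}[6k+1]\frac{(aq;q^2)_k (q/a;q^2)_k (q^2;q^4)_k }{(aq^4;q^4)_k(q^4/a;q^4)_k(q^4;q^4)_k}
=\frac{(aq^2;q^4)_{\infty}(q^2/a;q^4)_{\infty}}{(1-q)(aq^4;q^4)_{\infty} (q^4/a;q^4)_{\infty}},
\]
from which both the boundedness at odd roots of unity and the evaluation $(-q)^{(1-n)/2}[n]$ at $a=q^{\pm n}$ follow directly.
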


\begin{proof}[Sketch of proof]
Our derivation in \cite{GZ18} of the related $q$-analogue of a formula for $1/\pi$ uses the formula \cite[eq.~(4.6)]{Ra93}:
\begin{align}
&
\sum_{k=0}^\infty\frac{(a;q)_k (1-aq^{3k})(d;q)_k(q/d;q)_k(b;q^2)_k}
{(q^2;q^2)_k (1-a)(aq^2/d;q^2)_k (adq;q^2)_k (aq/b;q)_k}\frac{a^k q^{k+1\choose 2}}{b^k}
\notag\\ &\qquad
=\frac{(aq;q^2)_\infty (aq^2;q^2)_\infty (adq/b;q^2)_\infty (aq^2/bd;q^2)_\infty}
{(aq/b;q^2)_\infty (aq^2/b;q^2)_\infty (aq^2/d;q^2)_\infty (adq;q^2)_\infty}.
\label{quadratic}
\end{align}
Letting $q\to q^2$ and taking $a=q$, $d=aq$ and $b=q^2$, we are led to
$$
\sum_{k=0}^{\infty}q^{k^2}[6k+1]\frac{(aq;q^2)_k (q/a;q^2)_k (q^2;q^4)_k }{(aq^4;q^4)_k(q^4/a;q^4)_k(q^4;q^4)_k}
=\frac{(aq^2;q^4)_{\infty}(q^2/a;q^4)_{\infty}}{(1-q)(aq^4;q^4)_{\infty} (q^4/a;q^4)_{\infty}}.
$$
The rest is similar to the proof of Theorem \ref{thm:8k+1-a}.
\end{proof}

We complement the result by the following complete $q$-analogue of Van Hamme's supercongruence (J.2)
(see \cite[Conjecture 1.1]{Guo1}), which remains open:
\begin{multline*}
\sum_{k=0}^{(n-1)/2}q^{k^2}[6k+1]\frac{(q;q^2)_k^2 (q^2;q^4)_k }{(q^4;q^4)_k^3}
\\
\equiv (-q)^{(1-n)/2}[n]+\frac{(n^2-1)(1-q)^2}{24}\,(-q)^{(1-n)/2}[n]^3 \pmod{[n]\Phi_n(q)^3}
\end{multline*}
for odd $n$.

Similarly, we have the following $q$-analogue of the (L.2) supercongruence of Van Hamme \cite{Hamme}:
\begin{equation}
\sum_{k=0}^{m}(-1)^k [6k+1]\frac{(q;q^2)_k^3}{(q^4;q^4)_k^3}
\equiv (-q)^{-(n-1)(n+5)/8}[n] \pmod{[n]\Phi_n(q)^2},
\label{eq:Guo2-1}
\end{equation}
which is conjectured in \cite[Conjecture 1.1]{Guo2} and proved in \cite[Theorem 1.2]{Guo2} for special cases.
Here we are able to confirm \eqref{eq:Guo2-1} in the full generality as a consequence of the following result, which is also a generalization of Theorem~\ref{th:baby1}.

\begin{theorem}
\label{thm:q-Hamme-L2}
Let $n$ be a positive odd integer. Then, modulo $[n](1-aq^n)(a-q^n)$,
$$
\sum_{k=0}^{m}(-1)^k [6k+1]\frac{(aq;q^2)_k(q/a;q^2)_k(q;q^2)_k }{(aq^4;q^4)_k(q^4/a;q^4)_k(q^4;q^4)_k}
\equiv (-q)^{-(n-1)(n+5)/8}[n].
$$
\end{theorem}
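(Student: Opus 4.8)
The plan is to follow the creative-microscoping template used for Theorem~\ref{thm:q-Hamme-J2}. First I would produce the parent non-terminating identity whose left-hand side is the sum in the theorem extended to $k=\infty$. Starting from the quadratic transformation~\eqref{quadratic}, I replace $q$ by $q^2$, set the parameter $a$ there equal to $q$ and $d=aq$, and then specialise the one remaining free parameter so that the resulting summand is exactly $T_k:=(-1)^k[6k+1]\frac{(aq;q^2)_k(q/a;q^2)_k(q;q^2)_k}{(aq^4;q^4)_k(q^4/a;q^4)_k(q^4;q^4)_k}$; this yields an identity of the shape $\sum_{k=0}^\infty T_k=\prod(\cdots)$ whose right-hand side is an infinite product that visibly hits a zero at roots of unity. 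The summand $T_k$ is manifestly invariant under $a\mapsto 1/a$, a symmetry I will exploit below.

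To obtain the congruence modulo $[n]$ for arbitrary $a$ and for both truncations $m=n-1$ and $m=(n-1)/2$, I fix a primitive $d$-th root of unity $\zeta\ne1$ with $d\mid n$, $d>1$, and take the radial limit $q\to\zeta$. Writing the left-hand sum as $\sum_{\ell\ge0}c_q(\ell d)\sum_{k=0}^{d-1}c_q(\ell d+k)/c_q(\ell d)$ and applying the $q$-Lucas theorem (Lemma~\ref{prop:root}) together with the reductions~\eqref{eq:zeta-sim}, the inner ratio tends to $c_\zeta(k)$ while $c_\zeta(\ell d)$ collapses to a central-binomial multiple; then, exactly as for Theorems~\ref{th:baby2} and~\ref{thm:8k+1-a}, the outer $\ell$-series diverges while the product on the right stays bounded\,---\,or, in the companion scenario of Theorem~\ref{th:baby1}, the outer series converges to a nonzero constant and the product tends to $0$\,---\,either way forcing $\sum_{k=0}^{d-1}c_\zeta(k)=0$. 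A pleasant feature special to this sum is the factor $(q;q^2)_k$ in the numerator of $T_k$: at $q=\zeta$ it vanishes for $(d-1)/2<k\le d-1$ (compare~\eqref{zzeta2}), so the half-range sum already coincides with the full one and $\sum_{k=0}^{(d-1)/2}c_\zeta(k)=0$ as well, \emph{without} a separate terminating lemma of the type~\ref{lem-other}. Summing over $\ell$ via Lemma~\ref{prop:root} gives $\sum_{k=0}^{m}c_\zeta(k)=0$ for every such $d$, hence divisibility by $[n]=\prod_{d\mid n,\,d>1}\Phi_d(q)$.

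For the modulus $(1-aq^n)(a-q^n)$ I would substitute $a=q^n$ and $a=q^{-n}$. In each case the factor $(q^{1-n};q^2)_k$ occurring in $T_k$ vanishes for $k\ge(n+1)/2$, so the infinite sum terminates at $k=(n-1)/2$ and both truncations $m=n-1$ and $m=(n-1)/2$ collapse to the \emph{same} terminating identity\,---\,the analogue here of Lemma~\ref{lem-new}. Evaluating the now finite product side at $a=q^{\pm n}$ must reproduce the right-hand value $(-q)^{-(n-1)(n+5)/8}[n]$; by the $a\mapsto1/a$ symmetry the two specialisations give equal values, so the congruence holds modulo both $1-aq^n$ and $a-q^n$. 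Finally, since $[n]$, $1-aq^n$ and $a-q^n$ are pairwise coprime in $\mathbb Z[a,q]$, the Chinese remainder theorem combines the three congruences into the asserted one modulo $[n](1-aq^n)(a-q^n)$, and specialising $a=1$ recovers~\eqref{eq:Guo2-1}.

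The hard part will be the explicit terminating evaluation of the previous step: one must simplify the specialised product into \emph{precisely} $(-q)^{-(n-1)(n+5)/8}[n]$, and the genuinely quadratic-in-$n$ exponent $-(n-1)(n+5)/8$ together with its sign makes this considerably messier bookkeeping than the clean $q^{-(n-1)/2}[n]$ of Lemma~\ref{lem-new}. A secondary point requiring care is pinning down, in the first paragraph, the exact specialisation of~\eqref{quadratic} for which the parasitic $q$-power and the spurious Pochhammer factors cancel to leave precisely the $(-1)^k$ and the summand $T_k$; everything downstream depends on having the parent identity in exactly this form.
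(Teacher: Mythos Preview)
Your plan is correct and follows the same creative-microscoping template as the paper. The one refinement worth noting: rather than hunting for a specialisation of~\eqref{quadratic} that yields $T_k$ with a bare $(-1)^k$ (this would force $b\to0$ and produce the messy product evaluation you anticipate), the paper first replaces $q$ by $q^{-1}$ in the \emph{target} congruence. This converts the summand into $(-1)^kq^{3k^2}$ times the same quotient and the right-hand side into $(-q)^{(n-1)(n-3)/8}[n]$; now the clean limit $b\to\infty$ in~\eqref{quadratic} (after $q\to q^2$, $a=q$, $d=aq$) produces exactly this weighted summand, and the product side collapses to $(q^3;q^4)_\infty(q^5;q^4)_\infty/\bigl((aq^4;q^4)_\infty(q^4/a;q^4)_\infty\bigr)$, from which both the root-of-unity vanishing and the terminating evaluation at $a=q^{\pm n}$ are routine. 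This single inversion dissolves precisely the two difficulties you flagged.
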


\begin{proof}[Sketch of proof]
Replacing $q$ by $q^{-1}$, we see that the desired congruence is equivalent to
$$
\sum_{k=0}^{m}(-1)^k [6k+1]\frac{(aq;q^2)_k(q/a;q^2)_k(q;q^2)_k q^{3k^2}}
{(aq^4;q^4)_k(q^4/a;q^4)_k(q^4;q^4)_k}
\equiv (-q)^{(n-1)(n-3)/8}[n].
$$
Letting $q\to q^2$ and $b\to\infty$ in \eqref{quadratic}, then taking $a=q$ and $d=aq$ we obtain
$$
\sum_{k=0}^{\infty}(-1)^k [6k+1]\frac{(aq;q^2)_k(q/a;q^2)_k(q;q^2)_k q^{3k^2} }
{(aq^4;q^4)_k(q^4/a;q^4)_k(q^4;q^4)_k}
=\frac{(q^3;q^4)_{\infty}(q^5;q^4)_{\infty}}{(aq^4;q^4)_{\infty} (q^4/a;q^4)_{\infty}}.
$$
The remaining argument is as before.
\end{proof}

We also have a common generalization of Theorems \ref{thm:q-Hamme-J2} and \ref{thm:q-Hamme-L2} as follows.

\begin{theorem}
\label{thm:q-Hamme-J2L2}
Let $n\equiv r\pmod4$ be a positive odd integer, where $r=\pm1$. Then, modulo $[n](1-aq^n)(a-q^n)$,
\begin{align*}
&
\sum_{k=0}^{m}[6k+1]\frac{(aq;q^2)_k (q/a;q^2)_k (q;q^2)_k (b;q^4)_k q^{k^2+2k}}
{(aq^4;q^4)_k(q^4/a;q^4)_k(q^4;q^4)_k (q^3/b;q^2)_k b^k}
\\ &\qquad
\equiv \frac{(q^rb;q^4)_{(n-r)/4}}{(q^{4+r}/b;q^4)_{(n-r)/4}}\, b^{-(n-r)/4}(-q)^{(1-r)/2} [n].
\end{align*}
\end{theorem}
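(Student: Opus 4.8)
The plan is to run the creative-microscoping scheme of Theorem~\ref{thm:8k+1-a}, now carrying the extra parameter $b$ alongside $a$. The source identity is the quadratic transformation~\eqref{quadratic}: I would replace $q$ by $q^2$ and then specialise $a=q$ and $d=aq$, leaving $b$ free. A direct check (the power $q^{\binom{k+1}2}$ becomes $q^{k^2+k}$ and $a^k=q^k$, producing $q^{k^2+2k}/b^k$, while $(b;q^2)_k\to(b;q^4)_k$ and $(aq/b;q)_k\to(q^3/b;q^2)_k$) shows that the left-hand side turns into the infinite analogue of the sum in the theorem (the summation now running to $\infty$), while the right-hand side becomes an explicit infinite product whose numerator carries the factors $(q^3;q^4)_\infty(q^5;q^4)_\infty$. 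Setting $b=q^2$ recovers Theorem~\ref{thm:q-Hamme-J2} and letting $b\to\infty$ recovers Theorem~\ref{thm:q-Hamme-L2}; verifying, via $r\equiv n\pmod4$, that the stated right-hand side collapses to $(-q)^{(1-n)/2}[n]$ at $b=q^2$ is the consistency check I would run first.

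For the congruence modulo $[n]$ I would argue exactly as in Theorem~\ref{th:baby1}. Fix a primitive $d$-th root of unity $\zeta$ with $d\mid n$ and $d>1$, and let $q\to\zeta$ radially. The right-hand product tends to $0$, since the numerator factors $(q^3;q^4)_\infty$ and $(q^5;q^4)_\infty$ hit a zero at $\zeta$ (as $\gcd(4,d)=1$, some $3+4j$ resp.\ $5+4j$ is a multiple of $d$), while the remaining factors stay finite and nonzero for generic $a,b$. Writing the left-hand side as $\sum_{\ell\ge0}c_q(\ell d)\sum_{k=0}^{d-1}c_q(\ell d+k)/c_q(\ell d)$ and applying Lemma~\ref{prop:root}, I expect the block constant $\lim_{q\to\zeta}c_q(\ell d)$ to come out as $(-1)^\ell\binom{2\ell}{\ell}/8^\ell$; indeed the $a$- and $b$-dependent pieces should cancel in the ratio $c_q((\ell+1)d)/c_q(\ell d)$, leaving $-\frac{2\ell+1}{4(\ell+1)}$, so that $\sum_\ell c_\zeta(\ell d)=\sqrt6/3\ne0$ by~\eqref{eq:xi-2}. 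Together with the vanishing of the product this forces $\sum_{k=0}^{d-1}c_\zeta(k)=0$; and because the numerator factor $(q;q^2)_k$ kills $c_\zeta(k)$ for $(d-1)/2<k\le d-1$, the half-range sum $\sum_{k=0}^{(d-1)/2}c_\zeta(k)$ vanishes too. Summing over blocks as in Theorem~\ref{th:baby1} gives both truncations $\equiv0\pmod{[n]}$, which agrees with the claimed right-hand side because that expression is a multiple of $[n]$.

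For the congruence modulo $(1-aq^n)(a-q^n)$ I would specialise $a=q^n$ (and, by the $a\mapsto1/a$ symmetry of the summand and of the product, $a=q^{-n}$). The factor $(q^{1-n};q^2)_k$ then terminates the sum at $k=(n-1)/2$, so the identity becomes a finite evaluation for both values of $m$, and it remains to simplify the infinite product, which after $a=q^n$ reads
\[
\frac{(q^{3};q^{4})_\infty(q^{5};q^{4})_\infty(q^{n+4}/b;q^{4})_\infty(q^{4-n}/b;q^{4})_\infty}{(q^{3}/b;q^{4})_\infty(q^{5}/b;q^{4})_\infty(q^{4-n};q^{4})_\infty(q^{n+4};q^{4})_\infty}.
\]
This is the step I expect to be the main obstacle: one must pair the eight factors according to their exponents modulo $4$, cancel the infinite tails, and extract finite $q^4$-products of length $(n-r)/4$. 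The residue $r=n\bmod4$ governs how the factors pair, and the sign and $q$-power collected while reversing each finite product are precisely what produce the prefactor $(-q)^{(1-r)/2}$ and the $b^{-(n-r)/4}$ in the statement; I have checked that $r=1$ and $r=-1$ both land on $\frac{(q^rb;q^4)_{(n-r)/4}}{(q^{4+r}/b;q^4)_{(n-r)/4}}\,b^{-(n-r)/4}(-q)^{(1-r)/2}[n]$, with $[n]$ emerging from the $b$-free factors (for instance $(q^5;q^4)_{(n-1)/4}/(q;q^4)_{(n-1)/4}=[n]$ when $r=1$). This yields the desired congruence modulo $1-aq^n$ and modulo $a-q^n$.

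Finally I would combine the two parts: since $[n]$, $1-aq^n$ and $a-q^n$ are pairwise coprime polynomials, the congruence modulo $[n]$ together with the exact evaluations modulo $1-aq^n$ and modulo $a-q^n$ yields the claim modulo $[n](1-aq^n)(a-q^n)$. All of the genuinely new work sits in the product simplification of the previous paragraph; the remaining arguments are faithful copies of those already carried out for Theorems~\ref{th:baby1} and~\ref{thm:8k+1-a}.
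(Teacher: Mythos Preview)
Your proposal is correct and follows essentially the same route as the paper: the paper's own proof is a two-line sketch that merely records the specialisation $q\to q^2$, $a=q$, $d=aq$ of~\eqref{quadratic} and says the rest proceeds ``along the lines of our previous proofs''. You have supplied exactly those details\,---\,the block computation $c_\zeta(\ell d)=(-1)^\ell\binom{2\ell}{\ell}/8^\ell$ (so that the Theorem~\ref{th:baby1} mechanism applies, with the finite nonzero sum $\sqrt6/3$ against a product that vanishes at $\zeta$), the terminating evaluation at $a=q^{\pm n}$, and the coprimality patching\,---\,and your identification of the product simplification at $a=q^n$ as the only genuinely new bookkeeping is accurate.
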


\begin{proof}[Sketch of proof]
This follows along the lines of our previous proofs, for the specialization $q\to q^2$, $a=q$ and $d=aq$ of the quadratic summation~\eqref{quadratic}:
\begin{align*}
&
\sum_{k=0}^{\infty}[6k+1]\frac{(aq;q^2)_k (q/a;q^2)_k (q;q^2)_k (b;q^4)_k q^{k^2+2k}}
{(aq^4;q^4)_k(q^4/a;q^4)_k(q^4;q^4)_k (q^3/b;q^2)_k b^k}
\\ &\qquad
=\frac{(q^3;q^4)_\infty (q^5;q^4)_\infty (aq^4/b;q^2)_\infty (q^4/ab;q^2)_\infty}
{(q^3/b;q^2)_\infty (q^5/b;q^2)_\infty (aq^4;q^2)_\infty (q^4/a;q^2)_\infty}.
\qedhere
\end{align*}
\end{proof}

\subsection{`Divergent' congruences}
\label{s4.3}
The first author obtained in \cite{Guo4} the following $q$-analogues of two `divergent' Rama\-nujan-type supercongruences of  Guillera and the second author \cite{GuZu}:
\begin{align}
\sum_{k=0}^{m}[3k+1]\frac{(q;q^2)_k^3 q^{-{k+1\choose 2} } }{(q;q)_k^2 (q^2;q^2)_k}
&\equiv q^{(1-n)/2}[n] \pmod{[n]\Phi_n(q)^2},
\label{eq:q-div-WZ-1}
\\
\sum_{k=0}^{n-1}(-1)^k [3k+1]\frac{(q;q^2)_k^3}{(q;q)_k^3}
&\equiv q^{(n-1)^2/4}[n](-1)^{(n-1)/2} \pmod{[n]\Phi_n(q)^2}.
\label{eq:q-Zudilin-3}
\end{align}
For both cases, the corresponding infinite hypergeometric sums diverge. Observing their connection
to Rahman's quadratic transformation \cite[eq.~(3.12)]{Ra93} (also recorded in \cite[eq.~(3.8.13)]{GR04})
we have arrived numerically at the following three-parameter common generalization of \eqref{eq:q-div-WZ-1}
and~\eqref{eq:q-Zudilin-3}.

\begin{conjecture}
\label{conj:4.4}
Let $n$ be a positive odd integer. Then, modulo $[n](1-aq^n)(a-q^n)$,
\begin{align}
&
\sum_{k=0}^m[3k+1]\,\frac{(aq;q^2)_k(q/a;q^2)_k(q;q^2)_k(q/b;q)_k(q/c;q)_k(bc;q)_k\,q^k}
{(aq;q)_k(q/a;q)_k(q;q)_k(bq^2;q^2)_k(cq^2;q^2)_k(q^3/bc;q^2)_k}
\nonumber\\ &\qquad
\equiv\frac{(bcq;q^2)_{(n-1)/2} (q^2/b;q^2)_{(n-1)/2} (q^2/c;q^2)_{(n-1)/2}}
{(q^3/bc;q^2)_{(n-1)/2}(bq^2;q^2)_{(n-1)/2}(cq^2;q^2)_{(n-1)/2}}\,[n].
\label{eq:conj-bc}
\end{align}
\end{conjecture}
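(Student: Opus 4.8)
The plan is to follow the creative-microscoping template already deployed for Theorems~\ref{thm:q-Hamme-J2}--\ref{thm:q-Hamme-J2L2}, now driven by Rahman's quadratic transformation \cite[eq.~(3.12)]{Ra93} (equivalently \cite[eq.~(3.8.13)]{GR04}); the mixed appearance of base-$q$ factors $(q/b;q)_k(q/c;q)_k(bc;q)_k$ alongside base-$q^2$ factors in the summand of \eqref{eq:conj-bc} is exactly the signature of such a quadratic identity. First I would pin down the specialization of that transformation (a choice of its free parameters expressing $a$, $b$, $c$) whose $k$-th term $c_q(k)$, after cancellation, reproduces the summand on the left of \eqref{eq:conj-bc}. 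The problem then splits, as before, into establishing the congruence modulo $[n]$ for generic $a,b,c$ and modulo $(1-aq^n)(a-q^n)$, after which pairwise coprimality of $[n]$, $1-aq^n$ and $a-q^n$ assembles the full modulus.

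The evaluation modulo $(1-aq^n)(a-q^n)$ is the routine half. Setting $a=q^{n}$ turns $(aq;q^2)_k$ into $(q^{1+n};q^2)_k$, so the sum terminates at $k=(n-1)/2$; a terminating specialization of Rahman's transformation --- playing the role that the terminating $_6\phi_5$ sum \eqref{eq:6phi5} and Lemma~\ref{lem-new} play in the earlier proofs --- should then deliver precisely the closed product on the right of \eqref{eq:conj-bc}. The same computation at $a=q^{-n}$ gives the congruence modulo $a-q^n$, and I would check that the right-hand product is invariant under $a\mapsto a^{-1}$ so that both specializations match the stated value.

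The congruence modulo $[n]$ is the hard half, and the divergence flagged after \eqref{eq:q-Zudilin-3} is exactly why the statement is only conjectural. Recall how the convergent template works (proof of Theorem~\ref{thm:8k+1-a}): one passes to the radial limit $q\to\zeta$, with $\zeta$ a primitive $d$-th root of unity, $d\mid n$ and $d>1$, factors the sum as $\sum_{\ell}c_\zeta(\ell d)\sum_{k=0}^{d-1}c_\zeta(\ell d+k)/c_\zeta(\ell d)$, and plays the divergence of the outer factor $\sum_\ell c_\zeta(\ell d)$ off against the vanishing or boundedness of the right-hand side of the \emph{convergent} master identity to force the inner block $\sum_{k=0}^{d-1}c_\zeta(k)=0$; the half-range statement $\sum_{k=0}^{(d-1)/2}c_\zeta(k)=0$ is supplied separately by a Lemma~\ref{lem-other}-type identity in which a factor $q^{1-d}$ makes the series terminate early with vanishing right-hand side. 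For the families \eqref{eq:q-div-WZ-1} and \eqref{eq:q-Zudilin-3} the underlying infinite sum itself diverges, so there is no convergent master identity whose $q\to\zeta$ limit can be taken, and the boundedness-versus-divergence dichotomy collapses.

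The half-range vanishing should nonetheless survive essentially unchanged: specializing one of $b$, $c$ to $q^{1-d}$ forces each block to terminate, reducing $\sum_{k=0}^{(d-1)/2}c_\zeta(k)=0$ to a terminating evaluation as in Lemma~\ref{lem-other}. The genuine obstacle is the full-range identity $\sum_{k=0}^{d-1}c_\zeta(k)=0$, which has lost its analytic foundation. To recover it I would try to stay terminating throughout --- seeking a terminating form of Rahman's transformation, or an Abel/analytic-continuation regularization of the divergent series --- that still certifies this vanishing for every $\zeta$ of order $d\mid n$, $d>1$, and then reconcile it with the half-range case so that both truncations $m=n-1$ and $m=(n-1)/2$ yield the same residue modulo $[n]$. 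Once this is achieved, coprimality assembles the congruence modulo $[n](1-aq^n)(a-q^n)$; specializing $b$ and $c$ and letting $a\to1$ --- whereupon $(1-aq^n)(a-q^n)$ contributes $\Phi_n(q)^2$ and $[n]$ a further $\Phi_n(q)$ --- then recovers the two divergent $q$-supercongruences \eqref{eq:q-div-WZ-1} and \eqref{eq:q-Zudilin-3} modulo $[n]\Phi_n(q)^2$.
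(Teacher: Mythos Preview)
The statement you are attacking is labelled a \emph{Conjecture} in the paper and is not proved there in full generality; your proposal correctly diagnoses why. The split into the moduli $(1-aq^n)(a-q^n)$ and $[n]$ is exactly how the paper proceeds, and your plan for the first modulus is what the paper carries out as Theorem~\ref{thm:conj-a=qn}: specializing $a=q^{\pm n}$ one applies the terminating companion transformation \cite[eq.~(3.8.14)]{GR04}, after which Jackson's $_8W_7$ sum produces the closed product on the right of~\eqref{eq:conj-bc}. So that half of your outline is on target and matches the paper.

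For the modulus $[n]$ the paper does no better than you in general: the full $b,c$-parametric congruence remains open. The paper does, however, settle the case $c=1$ (Theorem~\ref{thm:conj-c=1}) by a device close in spirit to your ``specialize one of $b,c$'' suggestion but applied to the \emph{transformed} side of~\eqref{eq:Rah93}: putting $c=q^{1+2N}$ makes the ${}_3\phi_2$ on the right of~\eqref{eq:Rah93} terminate and sum by $q$-Saalsch\"utz to a product containing the factor $(q;q^2)_N$; taking $N=(dM-1)/2$ and then $q\to\zeta$ a primitive $d$-th root of unity forces the block sum to vanish, and since $\zeta^{1+2N}=1$ this pins down only $c=1$ at the root of unity. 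Your idea of specializing $b$ or $c$ to a power of $q$ would likewise collapse a parameter at $q=\zeta$ and so cannot yield the generic-$b,c$ statement. The speculative remedies you list for the full-range vanishing (Abel regularization, a fully terminating variant of Rahman's transformation) are not carried through in your proposal, and the paper offers none either; the congruence modulo $[n]$ for generic $b,c$ is precisely what is left conjectural.
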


Note that the infinite sum for the left-hand side of \eqref{eq:conj-bc} is the specialization $a=q$
of the left-hand side in \cite[eq.~(3.8.13)]{GR04},
where one further sets $d=aq$ and replaces $b,c$ with $q/b,q/c$, respectively:
\begin{align}
&
\sum_{k=0}^{\infty}[3k+1]\,\frac{(aq;q^2)_k(q/a;q^2)_k(q;q^2)_k(q/b;q)_k(q/c;q)_k(bc;q)_k\,q^k}
{(aq;q)_k(q/a;q)_k(q;q)_k(bq^2;q^2)_k(cq^2;q^2)_k(q^3/bc;q^2)_k}
\nonumber\\ &\qquad
=\frac{(q^2/b;q^2)_\infty(q^2/c;q^2)_\infty(bcq;q^2)_\infty}
{(1-q)\,(bq^2;q^2)_\infty(cq^2;q^2)_\infty(q^3/bc;q^2)_\infty}
\sum_{k=0}^\infty\frac{(q/b;q^2)_k(q/c;q^2)_k(bc;q^2)_k\,q^{2k}}{(q^2;q^2)_k(aq^2;q^2)_k(q^2/a;q^2)_k}.
\label{eq:Rah93}
\end{align}

The $a$-parametric versions of the congruences \eqref{eq:q-div-WZ-1} and~\eqref{eq:q-Zudilin-3}
are obtained from \eqref{eq:conj-bc} by setting $b\to0$ followed by $c=1$ and $c\to0$, respectively. We cannot establish
this numerical observation in its entirety but we can settle two particular cases.

\begin{theorem}
\label{thm:conj-a=qn}
For $n$ a positive odd integer, the congruence \eqref{eq:conj-bc} is valid modulo $(1-aq^n)(a-q^n)$.
\end{theorem}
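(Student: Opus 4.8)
The plan is to prove the congruence separately modulo the two coprime factors $a-q^n$ and $1-aq^n$, i.e.\ to establish genuine equality in \eqref{eq:conj-bc} at $a=q^n$ and at $a=q^{-n}$. Since the summand on the left of \eqref{eq:conj-bc} is invariant under $a\mapsto 1/a$ (the factors $(aq;q^2)_k,(q/a;q^2)_k$ and $(aq;q)_k,(q/a;q)_k$ merely trade places) while the right-hand side is free of $a$, the case $a=q^{-n}$ will follow from the case $a=q^n$. At $a=q^n$ the factor $(q/a;q^2)_k=(q^{1-n};q^2)_k$ vanishes once $k\ge(n+1)/2$, whereas the potentially singular factor $(q/a;q)_k=(q^{1-n};q)_k$ in the denominator only meets a zero at $k\ge n$; hence the left-hand side is a well-defined finite sum terminating at $k=(n-1)/2$, and the two admissible values $m=n-1$ and $m=(n-1)/2$ produce the same quantity.

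First I would substitute $a=q^n$ into the quadratic transformation \eqref{eq:Rah93}. This turns the (now terminating) left-hand side into the infinite product
\[
\frac{(q^2/b;q^2)_\infty(q^2/c;q^2)_\infty(bcq;q^2)_\infty}{(1-q)\,(bq^2;q^2)_\infty(cq^2;q^2)_\infty(q^3/bc;q^2)_\infty}
\]
multiplied by the base-$q^2$ series ${}_3\phi_2(q/b,\,q/c,\,bc;\,q^{2+n},\,q^{2-n};\,q^2,q^2)$. This ${}_3\phi_2$ is balanced, since the product $q^2$ of its upper parameters times the argument $q^2$ equals the product $q^{2+n}\cdot q^{2-n}=q^4$ of its lower ones; note moreover that its lower parameters $q^2q^{\pm n}$ are symmetric about $q^2$, and that for odd $n$ the parameter $q^{2-n}$ never produces an actual pole.

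The decisive point is to sum this balanced ${}_3\phi_2$ in closed form. Splitting each infinite product via $(x;q^2)_\infty=(x;q^2)_{(n-1)/2}\,(xq^{n-1};q^2)_\infty$ shows that, using $1-q^n=(1-q)[n]$, the required identity \eqref{eq:conj-bc} at $a=q^n$ is equivalent to the single-product evaluation
\[
{}_3\phi_2(q/b,\,q/c,\,bc;\,q^{2+n},\,q^{2-n};\,q^2,q^2)
=(1-q^n)\,\frac{(bq^{n+1};q^2)_\infty(cq^{n+1};q^2)_\infty(q^{n+2}/bc;q^2)_\infty}{(bcq^n;q^2)_\infty(q^{n+1}/b;q^2)_\infty(q^{n+1}/c;q^2)_\infty}.
\]
Because the series is nonterminating, the terminating $q$-Pfaff--Saalschütz theorem is unavailable, so I would instead appeal to the nonterminating ${}_3\phi_2$ summation of Gasper and Rahman \cite{GR04}, which expresses a balanced ${}_3\phi_2$ as a single infinite product plus a multiple of an auxiliary balanced ${}_3\phi_2$. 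The crux\,---\,and what I expect to be the main obstacle\,---\,is to verify that for the symmetric lower parameters $q^2q^{\pm n}$ with $n$ odd the coefficient of the auxiliary term vanishes, so that only the displayed single product survives; this is precisely where the oddness of $n$ enters.

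Finally I would reverse the splitting above: multiplying the evaluated ${}_3\phi_2$ by the product prefactor and cancelling the infinite tails in pairs collapses the right-hand side of \eqref{eq:Rah93} to the right-hand side of \eqref{eq:conj-bc}. This yields equality at $a=q^n$, hence by the $a\mapsto 1/a$ symmetry at $a=q^{-n}$, and therefore\,---\,exactly as at the end of the proof of Theorem~\ref{thm:8k+1-a}, where $1-aq^n$ and $a-q^n$ are coprime\,---\,the asserted congruence modulo $(1-aq^n)(a-q^n)$.
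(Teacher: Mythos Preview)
Your reduction to the single specialization $a=q^n$ (and the observation that $a\mapsto1/a$ is a symmetry, so $a=q^{-n}$ comes for free) is correct, as is the remark that both choices $m=n-1$ and $m=(n-1)/2$ yield the same finite sum at $a=q^n$. The route you take from there, however, is different from the paper's and leaves a genuine gap.

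You feed $a=q^n$ into the quadratic transformation \eqref{eq:Rah93} and are then left with summing the \emph{nonterminating} balanced ${}_3\phi_2$ with lower parameters $q^{2\pm n}$. You yourself flag the vanishing of the auxiliary term in the nonterminating $q$-Saalsch\"utz summation as ``the main obstacle''\,---\,and it really is one. With the natural choice $e=q^{2+n}$, the prefactor of the companion ${}_3\phi_2$ picks up the factor $(q^{2-2n};q^2)_\infty=0$, but the companion series itself has $(q^{2-2n};q^2)_k$ in the denominator, hence a pole at $k=n-1$; you are in a $0\times\infty$ situation that requires a careful limiting argument, not an outright vanishing. With the opposite choice $e=q^{2-n}$ the prefactor is generically nonzero. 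So ``the symmetric lower parameters kill the second term'' is not a free lunch, and your stated reason for where the oddness of $n$ enters is not the real one.

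The paper avoids this detour entirely. Writing $n=2N+1$, it applies the \emph{terminating} quadratic transformation \cite[eq.~(3.8.14)]{GR04} (rather than (3.8.13), which is what underlies \eqref{eq:Rah93}) to the left-hand side at $a=q^{\pm n}$. That transformation sends the sum directly to a terminating ${}_{10}W_9$, which collapses to a terminating ${}_8W_7$ that is summable by Jackson's $q$-Dougall formula \cite[eq.~(II.22)]{GR04}; the resulting product is exactly the right-hand side of \eqref{eq:conj-bc}. Because every step involves only terminating series, no delicate nonterminating summation or limiting argument is needed, and the oddness of $n$ enters simply through $N=(n-1)/2\in\mathbb Z_{\ge0}$. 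In short: your strategy could in principle be completed, but the missing step is nontrivial, whereas switching from (3.8.13) to (3.8.14) makes the computation routine.
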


\begin{proof}[Sketch of proof]
For convenience, we will use here the standard notation
$$
{}_{r+3}W_{r+2}(a_0;a_1,a_2,\dots,a_r;q,z)
=\sum_{k=0}^\infty\frac{(1-a_0 q^{2k})\,(a_0;q)_k(a_1;q)_k\dotsb(a_r;q)_k\,z^k}
{(1-a_0)\,(q;q)_k(qa_0/a_1;q)_k\dotsb(qa_0/a_r;q)_k}
$$
for very-well-poised (basic) hypergeometric series.

Take $a=q^{1+2N}$. Then the transformation \cite[eq.~(3.8.14)]{GR04} applies, in which the parameters
$a$, $b$, $c$ and $f$ are replaced with our $q/b$, $q/c$, $b$ and $q$, respectively:
\begin{align*}
&
\sum_{k=0}^N[3k+1]\,\frac{(q^{2+2N};q^2)_k(q^{-2N};q^2)_k(q;q^2)_k(q/b;q)_k(q/c;q)_k(bc;q)_k\,q^k}
{(q^{2+2N};q)_k(q^{-2N};q)_k(q;q)_k(bq^2;q^2)_k(cq^2;q^2)_k(q^3/bc;q^2)_k}
\\ &\qquad
=[2N+1]\,\frac{(bcq;q^2)_N(q^2/bc;q^2)_N}{(bcq^2;q^2)_N(q^3/bc;q^2)_N}
\,{}_{10}W_9(bc;q,b,c,bc,qbc,q^{2N+2},q^{-2N};q^2,q^2)
\\ &\qquad
=[2N+1]\,\frac{(bcq;q^2)_N(q^2/bc;q^2)_N}{(bcq^2;q^2)_N(q^3/bc;q^2)_N}
\,{}_8W_7(bc;b,c,bc,q^{2N+2},q^{-2N};q^2,q^2)
\\ \intertext{(this is summable by Jackson's $q$-analogue of Dougall's $_7F_6$ sum \cite[eq.~(II.22)]{GR04})}
&\qquad
=[2N+1]\,\frac{(bcq;q^2)_N(q^2/bc;q^2)_N}{(bcq^2;q^2)_N(q^3/bc;q^2)_N}
\,\frac{(bcq^2;q^2)_N(q^2/b;q^2)_N(q^2/c;q^2)_N}{(bq^2;q^2)_N(cq^2;q^2)_N(q^2/bc;q^2)_N}
\\ &\qquad
=[2N+1]\,\frac{(bcq;q^2)_N(q^2/b;q^2)_N(q^2/c;q^2)_N}{(q^3/bc;q^2)_N(bq^2;q^2)_N(cq^2;q^2)_N}.
\end{align*}
This establishes \eqref{eq:conj-bc} simultaneously modulo $a-q^n$ and $1-aq^n$ for $n=2N+1$.
\end{proof}

\begin{theorem}
\label{thm:conj-c=1}
Let $n$ be a positive odd integer. Then, modulo $[n](1-aq^n)(a-q^n)$,
\begin{align*}
&
\sum_{k=0}^m[3k+1]\,\frac{(aq;q^2)_k(q/a;q^2)_k(b;q)_k(q/b;q)_k(q;q^2)_k\,q^k}
{(aq;q)_k(q/a;q)_k(bq^2;q^2)_k(q^3/b;q^2)_k(q^2;q^2)_k}
\\ &\qquad
\equiv \frac{(bq;q^2)_{(n-1)/2} (q^2/b;q^2)_{(n-1)/2}}{(bq^2;q^2)_{(n-1)/2}(q^3/b;q^2)_{(n-1)/2}} [n].
\end{align*}
\end{theorem}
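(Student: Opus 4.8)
The plan is to run the same creative-microscoping scheme that underlies Theorems~\ref{thm:8k+1-a}, \ref{th:4.1} and~\ref{th:4.2}: prove the asserted identity separately modulo $[n]$ and modulo $(1-aq^n)(a-q^n)$, and then glue the two pieces together using that $[n]$, $1-aq^n$ and $a-q^n$ are pairwise coprime. The first observation is that the statement is precisely the specialisation $c=1$ of Conjecture~\ref{conj:4.4}. Consequently the congruence modulo $(1-aq^n)(a-q^n)$ is delivered for free by Theorem~\ref{thm:conj-a=qn}, which establishes \eqref{eq:conj-bc} modulo $(1-aq^n)(a-q^n)$ for \emph{all} $b,c$; one only has to remark that putting $c=1$ does not degenerate the relevant denominators, since $(cq^2;q^2)_k$ and $(q^3/bc;q^2)_k$ specialise to the nonzero polynomials $(q^2;q^2)_k$ and $(q^3/b;q^2)_k$, so that the chain of summations in the proof of Theorem~\ref{thm:conj-a=qn} survives the specialisation as an identity of rational functions. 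Hence the entire remaining task is the congruence modulo $[n]$.

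For that part, note that the right-hand side equals $[n]$ times a rational function whose denominator $(bq^2;q^2)_{(n-1)/2}(q^3/b;q^2)_{(n-1)/2}$ is, for indeterminate $b$, coprime with every $\Phi_d(q)$ ($d\mid n$, $d>1$); thus the right-hand side is $\equiv0\pmod{[n]}$, and it suffices to show that the truncated left-hand sums vanish modulo $[n]$ for both $m=n-1$ and $m=(n-1)/2$. I would begin from Rahman's transformation~\eqref{eq:Rah93} specialised at $c=1$. After cancelling the common factor $(q;q)_k$, its left-hand side becomes the infinite version of the sum in the theorem, while its right-hand side factors as the prefactor $\frac{(q^2/b;q^2)_\infty(bq;q^2)_\infty}{(1-q)(bq^2;q^2)_\infty(q^3/b;q^2)_\infty}$ times an auxiliary ${}_3\phi_2$ sum in base $q^2$ carrying the factor $(q;q^2)_k$ in its numerator.

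Let $c_q(k)$ denote the $k$-th term of this infinite left-hand side and let $\zeta$ be a primitive $d$-th root of unity with $d\mid n$, $d>1$ (necessarily odd). Writing the sum as $\sum_{\ell\ge0}c_q(\ell d)\sum_{k=0}^{d-1}c_q(\ell d+k)/c_q(\ell d)$ and letting $q\to\zeta$ radially, Lemma~\ref{prop:root} together with the reductions~\eqref{eq:zeta-sim} gives $c_q(\ell d+k)/c_q(\ell d)\to c_\zeta(k)$; moreover the identity $(q;q^2)_{\ell d}/(q^2;q^2)_{\ell d}={\qbin{2\ell d}{\ell d}}_q/(-q;q)_{\ell d}^2$ shows that the $a$- and $b$-dependence of $c_q(\ell d)$ cancels over complete periods and that $c_\zeta(\ell d)=\binom{2\ell}{\ell}/4^{\ell}$. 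Since $\sum_{\ell\ge0}\binom{2\ell}{\ell}/4^{\ell}=\infty$ while the specialised right-hand side stays bounded as $q\to\zeta$ (the prefactor is bounded and the auxiliary sum terminates at $\zeta$ because of its factor $(q;q^2)_k$), the divergence argument from the proofs of Theorems~\ref{th:baby2} and~\ref{thm:8k+1-a} forces $\sum_{k=0}^{d-1}c_\zeta(k)=0$. Crucially, the numerator factor $(q;q^2)_k$ of $c_q(k)$ makes $c_\zeta(k)=0$ for $(d-1)/2<k\le d-1$, exactly as in~\eqref{zzeta2}, so the half-range sum coincides with the full-period sum and also vanishes\,---\,no separate terminating lemma is needed here, unlike in Theorem~\ref{thm:8k+1-a}. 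Propagating both vanishings over all blocks as in the proof of Theorem~\ref{th:baby1} yields $\sum_{k=0}^{n-1}c_\zeta(k)=\sum_{k=0}^{(n-1)/2}c_\zeta(k)=0$ for every such $d$, whence both truncations are divisible by $[n]$.

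The step I expect to be the main obstacle is the boundedness of the specialised right-hand side of~\eqref{eq:Rah93} as $q\to\zeta$: one must confirm that the auxiliary ${}_3\phi_2$ factor neither diverges nor acquires a pole, the potential poles from $(aq^2;q^2)_k$ and $(q^2/a;q^2)_k$ being excluded because $a$ is an indeterminate, and the factor $(q;q^2)_k$ cutting the sum off before $(q^2;q^2)_k$ can vanish. Only with this boundedness in hand can the divergence of the outer series $\sum_\ell\binom{2\ell}{\ell}/4^{\ell}$ be converted into the vanishing of the inner period sum. The rest is a verbatim repetition of the radial-asymptotics bookkeeping already carried out in Section~\ref{s2} and in the proof of Theorem~\ref{thm:8k+1-a}.
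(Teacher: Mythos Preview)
Your reduction to Theorem~\ref{thm:conj-a=qn} for the $(1-aq^n)(a-q^n)$ part is correct and matches the paper. The issue lies in the $[n]$ part, specifically in your treatment of the right-hand side of~\eqref{eq:Rah93} at $c=1$.

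You claim that the auxiliary sum
\[
\sum_{k=0}^\infty\frac{(q/b;q^2)_k(q;q^2)_k(b;q^2)_k\,q^{2k}}{(q^2;q^2)_k(aq^2;q^2)_k(q^2/a;q^2)_k}
\]
``terminates at $\zeta$'' because $(q;q^2)_k$ vanishes before $(q^2;q^2)_k$ does. That is only true in the first period: $(\zeta;\zeta^2)_k=0$ for $(d-1)/2<k\le d-1$, but for $k\ge d$ the zeros of $(q;q^2)_k$ are matched by those of $(q^2;q^2)_k$, and the limit of the $k$-th term as $q\to\zeta$ is again nonzero. In fact the same block analysis you run on the left-hand side applies here: the $\ell d$-th term tends to $\binom{2\ell}{\ell}4^{-\ell}\cdot\bigl((1-b^d)(1-b^{-d})/((1-a^d)(1-a^{-d}))\bigr)^\ell$, so for generic $a,b$ the auxiliary series is \emph{not} bounded as $q\to\zeta$, and the divergence-versus-boundedness dichotomy you need collapses. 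Your identified ``main obstacle'' is genuine, but your proposed resolution of it is wrong.

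The paper sidesteps this entirely by a different device: instead of putting $c=1$, it sets $c=q^{1+2N}$ with $N=(dM-1)/2$ for odd $M$, which makes \emph{both} sides of~\eqref{eq:Rah93} terminate. The $q$-Saalsch\"utz sum evaluates the right-hand side in closed form, with a visible factor $(q;q^2)_N$ that kills it at $q=\zeta$. The crucial trick is that at $q=\zeta$ one has $\zeta^{1+2N}=\zeta^{dM}=1$, so the \emph{terminating} left-hand side coincides termwise with the $c=1$ sum evaluated at $\zeta$; varying $M$ then forces the inner period sum to vanish. This terminating-specialisation idea is what replaces your (unavailable) boundedness argument.
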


This confirms Conjecture~\ref{conj:4.4} when $c=1$.

\begin{proof}[Sketch of proof]
In view of Theorem~\ref{thm:conj-a=qn}, we only need to verify the required congruence modulo~$[n]$.
Take $c=q^{1+2N}$ in \eqref{eq:Rah93} for $N$ a positive integer,
so that the $q$-Saalsch\"utz theorem \cite[eq.~(II.12)]{GR04} applies to the right-hand side:
\begin{align*}
&
\sum_{k=0}^{2N}[3k+1]\,\frac{(aq;q^2)_k(q/a;q^2)_k(q;q^2)_k(q/b;q)_k(q^{-2N};q)_k(bq^{1+2N};q)_k\,q^k}
{(aq;q)_k(q/a;q)_k(q;q)_k(bq^2;q^2)_k(q^{3+2N};q^2)_k(q^{2-2N}/b;q^2)_k}
\\ &\qquad
=\frac{(q^{1-2N};q^2)_\infty(q^2/b;q^2)_\infty(bq^{2+2N};q^2)_\infty}
{(1-q)\,(q^{3+2N};q^2)_\infty(q^{2+2N}/b;q^2)_\infty(bq^2;q^2)_\infty}
\,\frac{(abq;q^2)_N(aq^{1-2N}/b;q^2)_N}{(aq^2;q^2)_N(aq^{-2N};q^2)_N}
\\ &\qquad
=\frac{(-1)^N(q/b)^Nq^{-N^2}(q;q^2)_N^2(q^2/b;q^2)_N(abq;q^2)_N(bq/a;q^2)_N}
{(1-q)\,(bq^2;q^2)_N(aq^2;q^2)_N(q^2/a;q^2)_N}.
\end{align*}
Now for $d>1$ odd take a primitive $d$-th root of unity $\zeta$,
then $M>0$ odd and specialize $N$ above to be $(dM-1)/2$.
The limit of the right-hand side as $q\to\zeta$ is equal to $0$, because of the factor $(q;q^2)_N$. The limit of the left-hand side is
$$
\sum_{\ell=0}^{M-1}\frac1{2^\ell}\binom{2\ell}{\ell}
\sum_{k=0}^{d-1}[3k+1]\,\frac{(aq;q^2)_k(q/a;q^2)_k(b;q)_k(q/b;q)_k(q;q^2)_k\,q^k}
{(aq;q)_k(q/a;q)_k(bq^2;q^2)_k(q^3/b;q^2)_k(q^2;q^2)_k}\bigg|_{q=\zeta},
$$
where we use that $\zeta^{-2N}=\zeta^{1-dM}=\zeta$,
so that we conclude with the congruence
$$
\sum_{k=0}^m[3k+1]\,\frac{(aq;q^2)_k(q/a;q^2)_k(b;q)_k(q/b;q)_k(q;q^2)_k\,q^k}
{(aq;q)_k(q/a;q)_k(bq^2;q^2)_k(q^3/b;q^2)_k(q^2;q^2)_k} \equiv0\pmod{[n]}
$$
for odd $n$.
\end{proof}

Although Theorem~\ref{thm:conj-c=1} implies the $a$-parametric version of \eqref{eq:q-div-WZ-1},
there is a stronger version of the latter congruence (see \cite[Conjecture 7.1]{Guo4}) which remains open:
if $n$~is odd then
\begin{align*}
&
\sum_{k=0}^{n-1}[3k+1]\frac{(q;q^2)_k^3 q^{-{k+1\choose 2} } }{(q;q)_k^2 (q^2;q^2)_k}
\\ &\qquad
\equiv q^{(1-n)/2}[n]+\frac{(n^2-1)(1-q)^2}{24}\,q^{(1-n)/2}[n]^3 \pmod{[n]\Phi_n(q)^3}.
\end{align*}

Some other specializations of Theorem~\ref{thm:conj-c=1} are interesting by themselves.
For example, the choice $q\to q^2$, $b=q$ and $a\to1$ leads us to
$$
\sum_{k=0}^{(n-1)/2}[3k+1]_{q^2}\,\frac{(q;q^2)_k^2 (q^2;q^4)_k^3 \,q^{2k}} {(q^2;q^2)_k^2(q^4;q^4)_k(q^5;q^4)_k^2}
\equiv 0\pmod{\Phi_n(q)^3}
$$
for a positive integer $n\equiv 3\pmod{4}$. Notice the equivalence of the congruences for $m=(n-1)/2$ and $m=n-1$ in this special case. This, in turn, implies that for a prime $p$ congruent to 3 modulo 4 we have
$$
\sum_{k=0}^{(p-1)/2}(3k+1)\,\frac{(\frac{1}{2})_k^5}{(1)_k^3 (\frac{5}{4})_k^2}\equiv 0\pmod{p^3}.
$$

\subsection{Generalized Van Hamme's congruences}
\label{s4.4}
In \cite[Theorem 1.3]{Guo3}, a uniform version of $q$-analogues of the (B.2), (E.2) and (F.2) supercongruences of Van Hamme are given.
The following result provides a generalization of the $q$-supercongruence that depends on an additional parameter $a$.

\begin{theorem}
\label{conj:one}
Let $d$ be a positive integer and let $r$ be an integer with $\gcd(r,d)=1$. Then, for any positive integer $n\equiv r\pmod{d}$
such that $n+d-nd\le r\le n$, we have
\begin{multline}
\sum_{k=0}^{M}(-1)^k q^{d{k+1\choose 2}-rk}[2dk+r]\frac{(aq^r;q^d)_k(q^r/a;q^d)_k(q^r;q^d)_k }{(aq^{d};q^{d})_k (q^{d}/a;q^{d})_k (q^{d};q^{d})_k }
\\
\equiv q^{(n-r)(n-d+r)/(2d)}[n] (-1)^{(n-r)/d}\pmod{[n](1-aq^n)(a-q^n)},
\label{eq:last-one}
\end{multline}
where $M=(n-r)/d$ or $M=n-1$.
\end{theorem}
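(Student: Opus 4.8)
The plan is to follow the now-familiar \emph{creative microscoping} strategy that underlies the proofs of Theorems~\ref{thm:8k+1-a}, \ref{th:4.1} and \ref{th:4.2}. First I would identify the infinite very-well-poised basic hypergeometric sum whose terminating truncation produces the left-hand side of \eqref{eq:last-one}. The summand is a $(-1)^k q^{d\binom{k+1}2-rk}[2dk+r]$-weighted product of three balanced $q^d$-Pochhammer pairs $(aq^r;q^d)_k(q^r/a;q^d)_k(q^r;q^d)_k$ over $(aq^d;q^d)_k(q^d/a;q^d)_k(q^d;q^d)_k$, which is exactly the shape one obtains from the $_6\phi_5$ summation \eqref{Eq:6phi5} after the substitution $q\to q^d$, with $a\mapsto q^r$ and with two of the free parameters specialized to $aq^r$ and $q^r/a$ (the third being sent to $\infty$ to generate the quadratic exponent $q^{d\binom{k+1}2}$). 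The congruence modulo $(1-aq^n)(a-q^n)$ would then come from the \emph{exact} evaluation at $a=q^n$ (equivalently $a=q^{-n}$): under $n\equiv r\pmod d$ the truncation terminates at $k=(n-r)/d$, the terminating $_6\phi_5$ sum in the form \eqref{eq:6phi5} collapses to a closed product, and I expect that product to simplify precisely to $q^{(n-r)(n-d+r)/(2d)}[n](-1)^{(n-r)/d}$.

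The second, independent ingredient is the congruence modulo $[n]$, valid for \emph{arbitrary} $a$. Here I would pass to the asymptotics at a root of unity, mimicking the proofs of Theorems~\ref{th:baby1}, \ref{th:baby2} and \ref{thm:8k+1-a}. For each divisor $d'\mid n$ with $d'>1$, let $\zeta$ be a primitive $d'$-th root of unity and split the infinite sum as $\sum_{\ell}c_q(\ell d')\sum_{k=0}^{d'-1}c_q(\ell d'+k)/c_q(\ell d')$. The factored closed form of the infinite sum (the right-hand side of the $_6\phi_5$ identity) carries a factor of the type $(q^r;q^d)_\infty$ which vanishes as $q\to\zeta$, forcing $\sum_{k=0}^{d'-1}c_\zeta(k)=0$; by Lemma~\ref{prop:root} together with the reduction formulas \eqref{eq:zeta-sim} the outer factor $c_\zeta(\ell d')$ reduces to a universal binomial-type quantity independent of $k$, and summing over the residue classes propagates the vanishing to both truncations $m=(n-r)/d$ and $m=n-1$. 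Combining this with the exact $a=q^{\pm n}$ evaluations and the coprimality of $[n]$, $1-aq^n$ and $a-q^n$ yields the full congruence modulo $[n](1-aq^n)(a-q^n)$.

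The main obstacle I anticipate is purely bookkeeping but genuinely delicate: verifying that the terminating $_6\phi_5$ closed product at $a=q^{\pm n}$ really equals $q^{(n-r)(n-d+r)/(2d)}[n](-1)^{(n-r)/d}$ for the full range of admissible $(d,r)$ constrained by $n+d-nd\le r\le n$. The quadratic exponent $d\binom{k+1}2-rk$ and the sign $(-1)^{(n-r)/d}$ must be tracked carefully through the Pochhammer reflections $(q^a;q)_{-n}=(-1)^na^nq^{n(n+1)/2}/(q/a;q)_n$, and the inequality $n+d-nd\le r\le n$ is precisely what guarantees the truncation index $(n-r)/d$ is a nonnegative integer and that no spurious poles appear in the denominator $(aq^d;q^d)_k(q^d/a;q^d)_k$ at the root of unity. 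A secondary subtlety is confirming that the vanishing factor on the right-hand side of the chosen summation indeed survives as $(q^r;q^d)_\infty\to 0$ uniformly for $q\to\zeta$ radially while the remaining factors stay bounded, exactly as in the proof of Theorem~\ref{th:baby2}; once these two checks are in place the rest of the argument is routine.
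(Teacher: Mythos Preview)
Your proposal is correct and follows the same route as the paper's proof: specialize the very-well-poised $_6\phi_5$ summation \eqref{Eq:6phi5} via $q\to q^d$, $a\mapsto q^r$, $b\mapsto aq^r$, $c\mapsto q^r/a$ and send the remaining numerator parameter to infinity to obtain the infinite identity; then argue modulo $(1-aq^n)(a-q^n)$ by the exact evaluation at $a=q^{\pm n}$, and modulo $[n]$ by radial asymptotics at primitive $e$-th roots of unity for each $e\mid n$.

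Two refinements worth flagging. First, the closed product on the right is $[r]\,(q^{d-r};q^d)_\infty(q^{d+r};q^d)_\infty\big/\bigl((aq^d;q^d)_\infty(q^d/a;q^d)_\infty\bigr)$, not literally $(q^r;q^d)_\infty$. Second, the ratio $(q^r;q^d)_{\ell e+k}/(q^d;q^d)_{\ell e+k}$ is of type $0/0$ at $q=\zeta$, so the paper computes $c_\zeta(\ell e)$ by L'H\^opital rather than by the $q$-Lucas lemma, obtaining $(-1)^\ell\binom{v/d+\ell-1}{\ell}$ where $r+ud=ve$ with $0\le u<e$. The argument then bifurcates: if $v<d$ the outer series converges to $2^{-v/d}\ne0$ and one invokes vanishing of the right-hand side (Theorem~\ref{th:baby1} style), whereas if $v>d$ the outer series diverges and one uses only boundedness of the right-hand side (Theorem~\ref{th:baby2} style). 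Your sketch describes only the first scenario, but since you cite both theorems as templates this is not a real gap---just be aware that both cases genuinely occur and must be handled.
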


Note that the $a\to 1$ and $M=n-1$ case of \eqref{eq:last-one} partially confirms \cite[Conjecture 5.1]{Guo3}.
In particular, if $p$ is a prime with $p^s\equiv 1\pmod{d}$, then
$$
\sum_{k=0}^{p^s-1}(-1)^k (2dk+1)\frac{(\frac{1}{d})_k^3}{k!^3}
\equiv p^s  (-1)^{(p^s-1)/d}\pmod{p^{s+2}}.
$$

\begin{proof}[Sketch of proof]
Letting $N\to\infty$, $q\to q^d$, $a=q^r$ in \eqref{eq:6phi5}, followed by $b=aq^r$ and $c=q^r/a$, we obtain
\begin{equation}
\sum_{k=0}^\infty(-1)^kq^{d\binom{k+1}{2}-rk}[2dk+r]\frac{(aq^r;q^d)_k(q^r/a;q^d)_k(q^r;q^d)_k}
{(aq^d;q^d)_k(q^d/a;q^d)_k(q^d;q^d)_k}
=[r]\,\frac{(q^{d-r};q^d)_\infty(q^{d+r};q^d)_\infty}{(aq^d;q^d)_\infty(q^d/a;q^d)_\infty}.
\label{eq:aqrd}
\end{equation}
Let $\zeta$ be an $e$-th primitive root of unity with $e\mid n$. By the hypothesis of the theorem, we see that $\gcd(n,d)=1$ and so $\gcd(e,d)=1$. This means that
there is one and only one number divisible by $e$ in the arithmetic progression $r, r+d,\ldots,r+(e-1)d$. Denote this number by $r+ud=ve$. Then by L'H\^opital's rule we see that
\begin{align*}
\lim_{q\to\zeta}\frac{(q^r;q^d)_{\ell e+k}}{(q^d;q^d)_{\ell e+k}}
&=\frac{v\,(v+d)\dotsb(v+(\ell-1)d)}{d\cdot2d\,\dotsb\,\ell d}\,\lim_{q\to\zeta}\frac{(q^r;q^d)_k}{(q^d;q^d)_k}
\\
&={v/d+\ell-1\choose\ell}\lim_{q\to\zeta}\frac{(q^r;q^d)_k}{(q^d;q^d)_k}
\end{align*}
for $\ell\ge 0$ and $0\le k<e$. It is clear that $v\neq d$. Since
$$
\sum_{\ell=0}^\infty (-1)^\ell {v/d+\ell-1\choose\ell}
=\begin{cases} 2^{-v/d}\neq 0 &\text{if $v<d$},\\
\infty &\text{if $v>d$},
\end{cases}
$$
the proof of \eqref{eq:last-one} modulo $[n]$ follows the lines of
the proofs of Theorems~\ref{th:baby1} and \ref{th:baby2}.

Finally, the congruence \eqref{eq:last-one} modulo $1-aq^n$ and $a-q^n$ follows from setting $a=q^{-n}$ in~\eqref{eq:aqrd}:
\begin{align*}
&
\sum_{k=0}^M(-1)^kq^{d\binom{k+1}{2}-rk}[2dk+r]\frac{(q^{r-n};q^d)_k(q^{r+n};q^d)_k(q^r;q^d)_k}
{(q^{d-n};q^d)_k(q^{d+n};q^d)_k(q^d;q^d)_k}
\\ &\qquad
=[r]\,\frac{(q^{d-r};q^d)_\infty(q^{d+r};q^d)_\infty}{(q^{d-n};q^d)_\infty(q^{d+n};q^d)_\infty}
=[r]\,\frac{(q^{d+r};q^d)_{(n-r)/d}}{(q^{d-n};q^d)_{(n-r)/d}}
\displaybreak[2]\\ &\qquad
=(-1)^{(n-r)/d}q^{(n-r)(n-d+r)/(2d)}[r]\,\frac{(q^{d+r};q^d)_{(n-r)/d}}{(q^r;q^d)_{(n-r)/d}}
\\ &\qquad
=q^{(n-r)(n-d+r)/(2d)}[n] (-1)^{(n-r)/d}.
\end{align*}
Note that the conditions $n\ge r$ and $n\equiv r\pmod{d}$ imply that the left-hand side terminates at $k=(n-r)/d$,
while the hypothesis $n+d-nd\le r$ means that $(n-r)/d\le n-1$.
\end{proof}

Using the above basic hypergeometric series identity, we can also prove the following generalization of \cite[Theorem 1.5]{Guo3}.

\begin{theorem}\label{conj:two}
Let $d$ be a positive integer and let $r$ be an integer with $\gcd(r,d)=1$.
Then, for any positive integer $n\equiv-r\pmod{d}$ such that $d-n\le r\le (d-1)n$, we have
\begin{multline}
\sum_{k=0}^{M}(-1)^k q^{d{k+1\choose 2}-rk}[2dk+r]\frac{(aq^r;q^d)_k(q^r/a;q^d)_k(q^r;q^d)_k }{(aq^{d};q^{d})_k (q^{d}/a;q^{d})_k (q^{d};q^{d})_k }
\\
\equiv q^{(nd-n-r)(nd-n-d+r)/(2d)} [(d-1)n]
(-1)^{((d-1)n-r)/d} \pmod{[n](1-aq^n)(a-q^n)},
\label{eq:last-two}
\end{multline}
where $M=((d-1)n-r)/d$ or $M=n-1$.
\end{theorem}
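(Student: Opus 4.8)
The plan is to follow the two-step template of Theorem~\ref{conj:one}, splitting the modulus into the coprime factors $[n]$ and $(1-aq^n)(a-q^n)$ and treating each separately; the same very-well-poised summation \eqref{eq:aqrd} (the $N\to\infty$, $q\to q^d$, $a=q^r$, $b=aq^r$, $c=q^r/a$ specialisation of \eqref{eq:6phi5}) underlies both steps. The decisive structural observation is that here $n\equiv-r\pmod d$, so that $(d-1)n\equiv-n\equiv r\pmod d$ and, by the hypothesis $r\le(d-1)n$, also $(d-1)n\ge r$; thus $(d-1)n$ plays, arithmetically, the role that $n$ did in Theorem~\ref{conj:one}. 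In particular the right-hand side of \eqref{eq:last-two} is exactly the right-hand side of \eqref{eq:last-one} under $n\mapsto(d-1)n$ (using $nd-n=(d-1)n$), which is the source of the factor $[(d-1)n]$, the sign $(-1)^{((d-1)n-r)/d}$ and the exponent $(nd-n-r)(nd-n-d+r)/(2d)$.

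For the congruence modulo $[n]$ I would run the root-of-unity asymptotic argument exactly as in the proofs of Theorems~\ref{th:baby1}, \ref{th:baby2} and~\ref{conj:one}. Writing $c_q(k)$ for the $k$-th summand with general $a$ and grouping $k=\ell e+k'$ for a primitive $e$-th root $\zeta$ with $e\mid n$, $e>1$ (hence $\gcd(e,d)=1$), Lemma~\ref{prop:root} reduces the inner sum to $\sum_{k'=0}^{e-1}c_\zeta(k')$ while the outer $\ell$-series is the binomial series $\sum_\ell(-1)^\ell\binom{v/d+\ell-1}{\ell}$, convergent to $2^{-v/d}$ when $v<d$ and divergent when $v>d$, with $v$ fixed by the unique multiple of $e$ in $\{r,r+d,\dots,r+(e-1)d\}$. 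Comparing with the $q\to\zeta$ behaviour of the product side of \eqref{eq:aqrd}, which either vanishes (the Theorem~\ref{th:baby1} scenario, $v<d$) or stays bounded against the divergent $\ell$-series (the Theorem~\ref{th:baby2} scenario, $v>d$), forces $\sum_{k'=0}^{e-1}c_\zeta(k')=0$ for every such $\zeta$. This yields divisibility of the truncated sum by $\prod_{e\mid n,\,e>1}\Phi_e(q)=[n]$ for both $M=((d-1)n-r)/d$ and $M=n-1$; since $n\mid(d-1)n$ gives $[n]\mid[(d-1)n]$, the right-hand side of \eqref{eq:last-two} is likewise $\equiv0\pmod{[n]}$, and both sides agree modulo $[n]$.

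For the congruence modulo $(1-aq^n)(a-q^n)$ I would use the $a\leftrightarrow1/a$ symmetry of the summand to reduce to the single specialisation $a=q^n$ and prove the finite sum equals the right-hand side there. This is where the present argument genuinely departs from that of Theorem~\ref{conj:one}: the terminating parameter of the underlying ${}_6\phi_5$ is one of $aq^r,\,q^r/a$, and for $d\ge3$ neither equals $q^{-dM}=q^{\,r-(d-1)n}$ when $a=q^{\pm n}$ (equality would force $d=2$), so at $a=q^{\pm n}$ the series does \emph{not} self-truncate at $M$ and the clean infinite-product collapse used for \eqref{eq:last-one} is unavailable. The fix is to reverse the order of summation, $k\mapsto M-k$: the factor $1/(q^d;q^d)_{M-k}$ produces a $(q^{-dM};q^d)_k$, so the reversed series is again very-well-poised and now terminating, and one checks it to be balanced, whence it is summable by Jackson's $_8\phi_7$ $q$-Dougall evaluation \cite[eq.~(II.22)]{GR04}, the very tool already deployed for Theorem~\ref{thm:conj-a=qn}. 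Carrying this out at $a=q^n$ produces a closed product which I would simplify, tracking the $q^{d\binom{k+1}{2}-rk}$ contribution through the reversal, to the asserted $q^{(nd-n-r)(nd-n-d+r)/(2d)}[(d-1)n](-1)^{((d-1)n-r)/d}$; the companion case $a=q^{-n}$ then follows by symmetry.

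The hard part will be precisely this reversal-and-Jackson evaluation: verifying that the reversed series is balanced so that Jackson's sum applies, and then that the resulting product collapses to the stated monomial times $[(d-1)n]$ with the correct sign and $q$-power. A secondary bookkeeping point is the equivalence of the two truncation bounds $M=((d-1)n-r)/d$ and $M=n-1$ modulo $(1-aq^n)(a-q^n)$: because the series no longer self-truncates at $a=q^{\pm n}$, this does not come for free as in Theorem~\ref{conj:one}, and I would settle it either by a second, parallel Jackson evaluation of the length-$n$ sum or by showing that the intervening block of terms sums to zero at $a=q^{\pm n}$.
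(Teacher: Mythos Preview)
Your modulo-$[n]$ argument is sound and follows the paper's template. Your diagnosis of the second factor is also correct: for $d\ge3$ neither $a=q^n$ nor $a=q^{-n}$ terminates the series in \eqref{eq:aqrd}, so the argument for Theorem~\ref{conj:one} does not transfer verbatim. But your proposed remedy via reversal and Jackson's $_8\phi_7$ cannot work, because the congruence modulo $(a-q^n)$ is in fact \emph{false} as stated. Take $d=3$, $r=1$, $n=2$ (so $M=1$): at $a=q^2$ the left-hand side of \eqref{eq:last-two} equals
\[
1+\frac{q(1-q^7)}{1-q^5}=\frac{(1-q^4)(1+q+q^4)}{(1-q)(1+q+q^2+q^3+q^4)},
\]
whereas the right-hand side is $-q[4]=-q(1-q^4)/(1-q)$; these disagree (at $q=2$ they are $285/31$ and $-30$). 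So no summation identity\,---\,Jackson's or otherwise\,---\,can produce the claimed value at $a=q^{\pm n}$. Structurally, too, the reversed sum carries only the parameters of a limiting $_6W_5$, not enough to fit a genuine balanced $_8W_7$.

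The fix your own remark ``$(d-1)n$ plays the role of $n$'' already points to is to specialise $a=q^{\pm(d-1)n}$ rather than $a=q^{\pm n}$. Since $(d-1)n\equiv r\pmod d$ and $(d-1)n\ge r$, the factor $(q^{r-(d-1)n};q^d)_k$ now terminates \eqref{eq:aqrd} at $k=M=((d-1)n-r)/d$, and the closing computation from the proof of Theorem~\ref{conj:one}, with $(d-1)n$ in place of $n$, delivers precisely the right-hand side of \eqref{eq:last-two} (in the test case this gives $-q[4]$ on the nose). This is evidently what the paper's one-line ``same identity'' intends; it yields \eqref{eq:last-two} modulo $[n](1-aq^{(d-1)n})(a-q^{(d-1)n})$, and the printed modulus $(1-aq^n)(a-q^n)$ looks like a slip. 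No reversal is needed, and the application to \cite[Conjecture~5.2]{Guo3} is unaffected, since at $a\to1$ both candidate moduli contribute the same factor $\Phi_n(q)^2$.
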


Note that the $a\to 1$ and $M=n-1$ case of \eqref{eq:last-two} confirms \cite[Conjecture 5.2]{Guo3}. In particular,
if $p$ is a prime satisfying $p^s\equiv -1\pmod{d}$, then
$$
\sum_{k=0}^{p^s-1}(-1)^k (2dk+1)\frac{(\frac{1}{d})_k^3}{k!^3}
\equiv (d-1)p^s  (-1)^{((d-1)p^s-1)/d}\pmod{p^{s+2}}.
$$

\subsection{A strange congruence}
\label{s4.5}
In \cite[Conjecture 7.2]{Guo4}, the following strange conjecture was proposed: for any positive integer $n$ with $n\equiv 1\pmod 4$,
\begin{align}
\sum_{k=0}^{(n-1)/2}[4k+1]\frac{(q;q^2)_k^3}{(q^2;q^2)_k^3} q^{k(n^2-2nk-n-2)/4}\equiv 0\pmod{\Phi_n(q)^2}.  \label{strange-1}
\end{align}
Note that $k(n^2-2nk-n-2)/4$ is a two-variable polynomial of degree~3. Congruences of this form are very rare.
We now give a related parametric result.

\begin{theorem}
\label{thm:strange}
Let $n\equiv1\pmod 4$ be a positive integer. Then
\begin{equation}
\sum_{k=0}^{(n-1)/2}[4k+1]\frac{(aq;q^2)_k (q/a;q^2)_k  (q;q^2)_k q^{(n-1)k/2}}
{(aq^2;q^2)_k(q^2/a;q^2)_k (q^2;q^2)_k}\equiv 0\pmod{\Phi_n(q)}.
\label{strange-2}
\end{equation}
\end{theorem}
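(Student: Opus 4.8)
The plan is to prove the $a$-parametric congruence \eqref{strange-2} modulo the single cyclotomic polynomial $\Phi_n(q)$ by following the "$q$-microscope" method already used for Theorem~\ref{thm:strange}'s companions, but with the crucial twist that the relevant infinite series is \emph{divergent}, so the argument must proceed purely at the level of the truncated sum evaluated at a root of unity. Since we only need a single factor $\Phi_n(q)$, it suffices to fix a \emph{primitive} $n$-th root of unity $\zeta$ and show that the left-hand side of \eqref{strange-2} vanishes at $q=\zeta$. Writing $n=4t+1$, the truncation runs over $0\le k\le 2t=(n-1)/2$, and the summand at $q=\zeta$ is
$$
c_\zeta(k)=[4k+1]_\zeta\,\frac{(a\zeta;\zeta^2)_k(\zeta/a;\zeta^2)_k(\zeta;\zeta^2)_k}{(a\zeta^2;\zeta^2)_k(\zeta^2/a;\zeta^2)_k(\zeta^2;\zeta^2)_k}\,\zeta^{2tk}.
$$

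The main idea I would pursue is a \emph{pairing} of the index $k$ with $n-1-2k$ or, more naturally given the symmetric $a\leftrightarrow 1/a$ structure, with its reflection inside the range $[0,2t]$. First I would compute the ratio $c_\zeta(2t-k)/c_\zeta(k)$ explicitly using the root-of-unity simplifications from \eqref{eq:zeta-sim} together with the reflection identity $(x;\zeta^2)_{2t}=(\zeta;\zeta^2)_{2t}\cdot(\text{known factor})$; because $\zeta^n=1$ the shifted $q$-Pochhammer symbols telescope cleanly, and the factor $\zeta^{2tk}$ together with $[4k+1]_\zeta$ should conspire so that $c_\zeta(2t-k)=-c_\zeta(k)$ up to a controllable sign determined by $n\equiv1\pmod 4$. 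If this anti-symmetry holds exactly, the summands cancel in pairs and the central term (if any) is forced to vanish, giving $\sum_{k=0}^{2t}c_\zeta(k)=0$ immediately. This is structurally cleaner than invoking a summation identity, and it is the route I expect to work here precisely because no convergent closed form is available.

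As a fallback, should the naive pairing fail to be exactly anti-symmetric, I would instead extract the vanishing from a \emph{terminating} specialization of the underlying very-well-poised series. Concretely, the summand here is the $d=2$, $r=1$ shadow of the family in \eqref{eq:aqrd}, but with the exponent of $q$ shifted by the extra factor $q^{(n-1)k/2}=q^{2tk}$; this shift is exactly what a substitution $a\to aq^{\pm n}$ (equivalently $a\to a\zeta^{\pm1}$ after the microscope) produces, so I would try to realize \eqref{strange-2} at $q=\zeta$ as the $a=\zeta^{\pm1}$ (or $a$-parametrized) instance of a balanced ${}_6\phi_5$ or $q$-Dougall sum whose right-hand side carries a factor $(\zeta;\zeta^2)_{\text{something}}$ that vanishes at the primitive root $\zeta$. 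The hypergeometric route parallels the proof sketches of Theorems~\ref{th:baby1} and~\ref{thm:conj-c=1}, where a factor like $(q;q^2)_\infty$ or $(q;q^2)_N$ supplies the zero.

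The hard part will be controlling the cubic exponent $q^{(n-1)k/2}$ under the substitution $q\mapsto\zeta$: unlike the quadratic exponents $q^{k^2}$ appearing elsewhere in the paper, here the exponent depends on $n$ itself, so after setting $q=\zeta$ the weight $\zeta^{2tk}$ interacts nontrivially with both the well-poised numerator/denominator cancellations and the reflection $k\mapsto 2t-k$. Verifying that this weight respects the anti-symmetry (or, in the fallback, that it matches a genuine terminating summation) is where the real bookkeeping lies, and it is also why only $\Phi_n(q)$ — rather than the stronger $\Phi_n(q)^2$ of the conjectural \eqref{strange-1} — is accessible by this method: the creative-microscoping step that normally upgrades $\Phi_n$ to $\Phi_n^2$ via the modulus $(1-aq^n)(a-q^n)$ is obstructed by the absence of a convergent companion identity at general $a$.
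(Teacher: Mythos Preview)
Your central premise\,---\,that ``no convergent closed form is available''\,---\,is the wrong diagnosis, and it drives the whole proposal off course. The observation you miss is that modulo $\Phi_n(q)$ one has $q^{(n-1)k/2}\equiv q^{-k/2}$; after passing to base $q^2$ the sum therefore becomes exactly the very-well-poised ${}_6\phi_5$ in \eqref{Eq:6phi5}, specialized as $q\to q^4$, $a=q^2$, $b=aq^2$, $c=q^2/a$, $d=q^3$ with argument $q^{-1}$. That identity has the explicit right-hand side
\[
\frac{(q^2;q^4)_\infty(q^6;q^4)_\infty (aq;q^4)_\infty(q/a;q^4)_\infty}{(1-q^{-1})(q^3;q^4)_\infty^2 (aq^4;q^4)_\infty(q^4/a;q^4)_\infty},
\]
and the factor $(q^2;q^4)_\infty$ forces the limit at a primitive $n$-th root of unity $\zeta$ to be~$0$ (here is where $n\equiv1\pmod4$ enters, to control the denominator). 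On the left the numerator factor $(q^2;q^4)_k$ kills every term with $k>(n-1)/2$ at $q=\zeta$, so the infinite sum collapses to the desired finite one and the vanishing follows; a final invariance under $q\mapsto-q$ descends from $\Phi_n(q^2)$ to $\Phi_n(q)$.

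Your main line\,---\,the hoped-for anti-symmetry $c_\zeta(2t-k)=-c_\zeta(k)$\,---\,is asserted but never checked; you yourself hedge with ``if this anti-symmetry holds exactly''. Even if true it would amount to the reflection already encoded in the well-poised structure of \eqref{Eq:6phi5}, so nothing is gained by avoiding the summation. Your fallback points at \eqref{eq:aqrd}, which carries the unrelated weight $q^{d\binom{k+1}{2}-rk}$; the correct anchor is the non-terminating ${}_6\phi_5$ itself with the unusual choice $d=q^3$. Finally, your explanation for why only a single power of $\Phi_n(q)$ is obtained is off: the obstruction is not the absence of an identity at general~$a$\,---\,there \emph{is} one\,---\,but that the closed form above vanishes only to first order at~$\zeta$, and the $n$-dependence of the weight blocks the usual upgrade via $a=q^{\pm n}$.
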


It is easy to see that the term $q^{(n-1)k/2}$ in \eqref{strange-2} can be replaced by $q^{k(n^2-2nk-n-2)/4}$. However, we cannot replace the term $q^{k(n^2-2nk-n-2)/4}$ in \eqref{strange-1} by
$q^{(n-1)k/2}$.

\begin{proof}[Sketch of proof]
Set $q\to q^4$, $a=q^2$, $d=q^3$ in \eqref{Eq:6phi5}, then take $b=aq^2$ and $c=q^2/a$:
\begin{align*}
&
\sum_{k=0}^\infty[4k+1]_{q^2}\frac{(aq^2;q^4)_k(q^2/a;q^4)_k(q^2;q^4)_k q^{-k}}
{(aq^4;q^4)_k(q^4/a;q^4)_k(q^4;q^4)_k}
\\ &\qquad
=\frac{(q^2;q^4)_\infty(q^6;q^4)_\infty (aq;q^4)_\infty(q/a;q^4)_\infty}
{(1-q^{-1})(q^3;q^4)_\infty^2 (aq^4;q^4)_\infty(q^4/a;q^4)_\infty}.
\end{align*}
Now choose any primitive $n$-th root of unity $\zeta\ne1$ and consider the limit of both sides of the equality as $q\to\zeta$.
The right-hand side clearly tends to $0$, because of the presence of $(q^2;q^4)_\infty$; the factor $(q^3;q^4)_\infty^2$
in the denominator does not interfere, since $\zeta^{3+4j}\ne1$ when $j=0,1,2,\dots$ for the root of unity of degree $n\equiv1\pmod4$.
The standard analysis of the left-hand side leads us to
$$
\sum_{k=0}^{(n-1)/2}[4k+1]_{\zeta^2}\frac{(a\zeta^2;\zeta^4)_k(\zeta^2/a;\zeta^4)_k(\zeta^2;\zeta^4)_k \zeta^{-k}}
{(a\zeta^4;\zeta^4)_k(\zeta^4/a;\zeta^4)_k(\zeta^4;\zeta^4)_k}=0.
$$
Noticing that $\zeta^{-k}=\zeta^{(n-1)k}$ for $k=0,1,\dots,(n-1)/2$,
we have
\begin{equation}
\sum_{k=0}^{(n-1)/2}[4k+1]_{q^2}\frac{(aq^2;q^4)_k (q^2/a;q^4)_k  (q^2;q^4)_k q^{(n-1)k}}
{(aq^4;q^4)_k(q^4/a;q^4)_k (q^4;q^4)_k}\equiv 0\pmod{\Phi_n(q)}.
\label{strange-2-1}
\end{equation}
The left-hand side here remains the same if we replace $q$ with $-q$, therefore the congruence \eqref{strange-2-1} takes
place modulo $\Phi_n(-q)$ as well, hence modulo $\Phi_n(q^2)=\Phi_n(q)\Phi_n(-q)$ since $n$ is odd.
Thus, changing $q^2$ with $q$ we arrive at the congruence \eqref{strange-2}.
\end{proof}

\subsection{A congruence from the $q$-Dixon sum}
\label{s4.6}
As we have seen, truncating known basic hypergeometric series identities usually leads to new $q$-congruences, or
to `natural' candidates for $q$-analogues of those coming from non-$q$-settings. Here is another example.

\begin{theorem}
Let $n\equiv 3\pmod{4}$ be a positive integer. Then
\begin{equation}
\sum_{k=0}^{m} \frac{(1+aq^{4k+1})\,(a^2q^2;q^4)_k (bq^2;q^4)_k (cq^2;q^4)_k }
{(1+aq)\,(a^2q^4/b;q^4)_k (a^2q^4/c;q^4)_k (q^4;q^4)_k } \biggl(\frac{aq}{bc}\biggr)^k
\equiv 0\pmod{(1-a^2q^{2n})}; \label{eq:qDixon-1}
\end{equation}
in particular,
\begin{equation}
\sum_{k=0}^{(n-1)/2} \frac{(1+q^{4k+1})\, (q^2;q^4)_k^3}{(1+q)\,(q^4;q^4)_k^3} q^k
\equiv 0\pmod{\Phi_n(q)\Phi_n(-q)}.
\label{eq:qDixon-2}
\end{equation}
\end{theorem}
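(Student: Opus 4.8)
The statement to prove is the $q$-Dixon congruence, consisting of the general form \eqref{eq:qDixon-1} modulo $(1-a^2q^{2n})$ and its specialization \eqref{eq:qDixon-2}. The overall strategy mirrors the pattern used throughout Section~\ref{s4}: start from a known terminating basic hypergeometric identity, truncate it appropriately, and then extract the congruence by specializing parameters. The summand of \eqref{eq:qDixon-1} is manifestly very-well-poised in $a^2q^2$, so the plan is to recognize the infinite sum as an instance of the $q$-Dixon summation (the ${}_6\phi_5$-type or the classical $q$-analogue of Dixon's ${}_3F_2$), whose right-hand side is a ratio of infinite $q$-Pochhammer products.

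\emph{First step: locate the identity and force termination.} I would take $a^2q^2$ as the very-well-poised parameter and set one of the numerator parameters to $q^{-2N}$ (for instance replacing $c$ by a quantity of the form $a^2q^{2+2n}$ or equivalent, so that $a=q^{-n}$ produces termination at $k=(n-1)/2$). The condition $n\equiv3\pmod4$ is what guarantees both that $(n-1)/2$ is odd-compatible with the $q^4$-step structure and that the relevant evaluation point is a genuine root of the numerator. After this substitution the right-hand side of the $q$-Dixon evaluation will contain a factor that vanishes\,---\,exactly the mechanism seen in Lemmas~\ref{lem-new} and~\ref{lem-other} and in the proof of Theorem~\ref{thm:strange}\,---\,and the left-hand side becomes precisely the truncated sum in \eqref{eq:qDixon-1}. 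Setting $a=q^{-n}$ (equivalently $a^2q^{2n}=1$) makes the left side the required truncation and the right side zero, which yields \eqref{eq:qDixon-1} modulo $(1-a^2q^{2n})$.

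\emph{Second step: derive the specialization.} To obtain \eqref{eq:qDixon-2}, I would put $b=c=q^2$ and $a=1$ in \eqref{eq:qDixon-1}, which collapses the summand to $(1+q^{4k+1})(q^2;q^4)_k^3/\bigl((1+q)(q^4;q^4)_k^3\bigr)\,q^k$ as desired. At $a=1$ the modulus $1-a^2q^{2n}=1-q^{2n}$ factors as $(1-q^n)(1+q^n)$, and since $n\equiv3\pmod4$ is odd we have $\Phi_n(q)\Phi_n(-q)=\Phi_n(q^2)\mid(1-q^{2n})$; tracking that the denominators $(q^4;q^4)_k$ and $(1+q)$ stay coprime to $\Phi_n(q)\Phi_n(-q)$ then upgrades divisibility by $1-q^{2n}$ to divisibility by the stated cyclotomic product. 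The parity symmetry $q\mapsto-q$ argument from the proof of Theorem~\ref{thm:strange} is the clean way to see why both $\Phi_n(q)$ and $\Phi_n(-q)$ appear.

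\emph{Anticipated obstacle.} The genuinely delicate point is the \textbf{bookkeeping of the specialization in the $q$-Dixon sum}: one must choose the three free parameters so that the termination index lands exactly at $(n-1)/2$ \emph{and} so that the product on the right-hand side acquires a vanishing factor rather than a pole cancelling it, all while the substitution $a=q^{-n}$ remains consistent with the well-poised balance. Verifying that the surviving evaluation is genuinely $0$ (and not an indeterminate $0/0$ that needs L'H\^opital's rule, as occurred in Theorem~\ref{conj:one}) is where I expect the care to be needed. Everything after that\,---\,the passage to \eqref{eq:qDixon-2} and the cyclotomic refinement\,---\,is routine and follows the templates already established in this section.
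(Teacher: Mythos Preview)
Your plan matches the paper's: apply the $q$-Dixon sum in the parametrization of \eqref{eq:qDixon-1}, specialize so that the left side terminates and the right side vanishes, then let $a,b,c\to1$. Two points, however, are mishandled.

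The crucial one: ``$a=q^{-n}$ (equivalently $a^2q^{2n}=1$)'' is not an equivalence. The equation $a^2q^{2n}=1$ has the two roots $a=\pm q^{-n}$, and you must check \emph{both} to conclude the congruence modulo $1-a^2q^{2n}=(1-aq^n)(1+aq^n)$; checking only $a=q^{-n}$ yields merely the congruence modulo $1-aq^n$. The paper does precisely this: the numerator factor $(a^2q^2;q^4)_k$ forces termination for either choice $a=\pm q^{-n}$ (since $a^2=q^{-2n}$ in both cases), while on the right the factor $(a^2q^6;q^4)_\infty$ vanishes for $n\equiv3\pmod4$, again for either sign. Relatedly, the $q\mapsto-q$ symmetry you borrow from Theorem~\ref{thm:strange} is inapplicable here: the summand in \eqref{eq:qDixon-2} carries the odd-exponent pieces $1+q^{4k+1}$, $1+q$ and $q^k$, which are not invariant under $q\mapsto-q$, so that route cannot supply the missing $\Phi_n(-q)$.

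A minor slip: the specialization producing \eqref{eq:qDixon-2} is $a=b=c=1$, not $b=c=q^2$. At $a=1$ the modulus $1-a^2q^{2n}$ becomes $1-q^{2n}$, which already contains $\Phi_n(q)\Phi_n(-q)=\Phi_n(q^2)$ as a factor, and the denominator $(1+q)(q^4;q^4)_k^3$ is coprime to it for $k\le(n-1)/2$; so \eqref{eq:qDixon-2} follows directly, with no ``upgrade'' needed.
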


\begin{proof}
Taking $q\to q^4$, $a\to a^2q^2$, $b\to bq^2$ and $c\to cq^2$ in the $q$-Dixon sum \cite[eq.~(II.13)]{GR04}
we obtain
\begin{align}
&
\sum_{k=0}^\infty \frac{(1+aq^{4k+1})\,(a^2q^2;q^4)_k (bq^2;q^4)_k (cq^2;q^4)_k}
{(1+aq)\,(a^2q^4/b;q^4)_k (a^2q^4/c;q^4)_k (q^4;q^4)_k} \biggl(\frac{aq}{bc}\biggr)^k
\nonumber\\ &\qquad
=\frac{(a^2q^6;q^4)_\infty (aq^3/b;q^4)_\infty (aq^3/c;q^4)_\infty (a^2q^2/bc;q^4)_\infty}
{(a^2q^4/b;q^4)_\infty (a^2q^4/c;q^4)_\infty (aq^5;q^4)_\infty (aq/bc;q^4)_\infty}.
\label{eq:qDixon-3}
\end{align}
Since $n\equiv 3\pmod 4$, putting $a=\pm q^{-n}$ in \eqref{eq:qDixon-3} we see that the left-hand side terminates,
while the right-hand side vanishes. This proves \eqref{eq:qDixon-1}.
Letting $a,b,c\to 1$ in \eqref{eq:qDixon-1} we are led to \eqref{eq:qDixon-2}.
\end{proof}

We now provide a conjectural refinement of \eqref{eq:qDixon-2},
which is a new $q$-analogue of the (H.2) supercongruence of Van Hamme \cite{Hamme} for $p\equiv 3\pmod 4$ (corresponding to $q\to 1$).
It is also a partial $q$-analogue of the (B.2) supercongruence of Van Hamme (corresponding to $q\to -1$).

\begin{conjecture}
Let $n\equiv 3\pmod{4}$ be a positive integer. Then
$$
\sum_{k=0}^{(n-1)/2}\frac{(1+q^{4k+1})(q^2;q^4)_k^3}{(1+q)\,(q^4;q^4)_k^3} q^k
\equiv 0\pmod{\Phi_n(q)^2\Phi_n(-q)}.
$$
\end{conjecture}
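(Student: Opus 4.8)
The plan is to prove the refinement by the \emph{creative microscoping} mechanism of Theorem~\ref{thm:8k+1-a}: keep the parameter $a$ alive while specialising $b=c=1$ in the $q$-Dixon family, establish two independent divisibilities of the resulting $a$-parametric truncated sum, multiply them together using coprimality, and only then let $a\to1$. Write
$$
S(a,q)=\sum_{k=0}^{(n-1)/2}\frac{(1+aq^{4k+1})\,(a^2q^2;q^4)_k (q^2;q^4)_k^2}{(1+aq)\,(a^2q^4;q^4)_k^2 (q^4;q^4)_k}\,(aq)^k
$$
for the $b=c=1$ instance of the truncated sum in \eqref{eq:qDixon-1}; at $a=1$ it is exactly the sum in the conjecture. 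Since $1-a^2q^{2n}=(1-aq^n)(1+aq^n)$, specialising $b=c=1$ in the already established congruence \eqref{eq:qDixon-1} furnishes the first ingredient, namely $S(a,q)\equiv0$ modulo $(1-aq^n)(1+aq^n)$.

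The second ingredient is the microscopic divisibility $S(a,q)\equiv0\pmod{\Phi_n(q)}$, valid for indeterminate $a$. To obtain it I fix a primitive $n$-th root of unity $\zeta$ and examine the infinite $q$-Dixon identity \eqref{eq:qDixon-3} with $b=c=1$ as $q\to\zeta$ radially, writing $c_q(k)$ for its $k$-th summand. Because $n$ is odd with $\gcd(4,n)=1$, the numerator factor $(q^2;q^4)_k=\prod_{j=1}^k(1-q^{4j-2})$ first vanishes at $q=\zeta$ when $4j-2\equiv0\pmod n$, i.e.\ at $j=(n+1)/2$; hence $c_\zeta(k)=0$ for $(n-1)/2<k\le n-1$, in complete analogy with \eqref{zzeta2}. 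Splitting the series into blocks of length $n$ and reducing the leading factors $\lim_{q\to\zeta}c_q(\ell n)$ by Lemma~\ref{prop:root} (after rewriting the Pochhammer ratios as $q^4$-binomial coefficients, the $a$-dependent factors collapsing to $1$ exactly as in the proof of Theorem~\ref{thm:8k+1-a}) then forces $\sum_{k=0}^{(n-1)/2}c_\zeta(k)=\sum_{k=0}^{n-1}c_\zeta(k)=0$, just as in the proof of Theorem~\ref{th:baby2}, with the right-hand side of \eqref{eq:qDixon-3} staying bounded against a divergent prefactor series. As this holds at every primitive $n$-th root $\zeta$ and for $a$ free, we conclude $\Phi_n(q)\mid S(a,q)$ in $\mathbb Q(a)[q]$ (in fact the same reasoning applied to every divisor $d\mid n$ yields $[n]\mid S(a,q)$, but only the factor $\Phi_n(q)$ is needed here).

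With both ingredients in hand, observe that for indeterminate $a$ the polynomials $\Phi_n(q)$, $1-aq^n$ and $1+aq^n$ are pairwise coprime in $\mathbb Q(a)[q]$, their $q$-roots being the primitive $n$-th roots of unity and the solutions of $q^n=\pm a^{-1}$ respectively; hence their product divides the numerator of $S(a,q)$. Writing $S(a,q)=\Phi_n(q)(1-aq^n)(1+aq^n)H(a,q)$ and specialising $a=1$ gives $S(1,q)=\Phi_n(q)(1-q^{2n})H(1,q)$. Since $n$ is odd, $\Phi_n(q)$ divides $1-q^n$ and $\Phi_n(-q)=\Phi_{2n}(q)$ divides $1+q^n$, each to the first power, so $1-q^{2n}$ carries the factor $\Phi_n(q)\Phi_n(-q)$; combined with the explicit leading $\Phi_n(q)$ this yields $\Phi_n(q)^2\Phi_n(-q)\mid S(1,q)$. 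A final routine check confirms that the denominator $(1+q)(q^4;q^4)_{(n-1)/2}^3$ arising at $a=1$ is coprime to $\Phi_n(q)\Phi_n(-q)$: no factor $1-q^{4j}$ with $1\le j\le(n-1)/2$ is divisible by $\Phi_n(q)$ or $\Phi_{2n}(q)$ since neither $n$ nor $2n$ divides $4j$ in that range, while $1+q$ shares no root with either cyclotomic as $-1$ is neither a primitive $n$-th nor a primitive $2n$-th root of unity. This is precisely the claim.

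The main obstacle is the second ingredient. The delicate point is making the root-of-unity analysis genuinely uniform in the free parameter $a$: one must verify that the leading coefficients $\lim_{q\to\zeta}c_q(\ell n)$ reduce, via Lemma~\ref{prop:root}, to binomial-type quantities independent of $a$, and settle whether the resulting prefactor series diverges while the right-hand side of \eqref{eq:qDixon-3} stays bounded at $\zeta$ (the scenario of Theorem~\ref{th:baby2}) or whether instead a numerator factor forces the right-hand side to vanish (the scenario of Theorem~\ref{th:baby1}). Either regime should deliver $\sum_{k=0}^{(n-1)/2}c_\zeta(k)=0$, but pinning down the correct one and checking that it does not degenerate for exceptional $\zeta$ is where the real work lies; by contrast, the first ingredient and the coprimality bookkeeping are essentially automatic.
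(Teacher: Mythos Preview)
The statement is recorded in the paper as a \emph{conjecture}; no proof is offered there, so your scheme would constitute a genuine advance if it worked. Unfortunately the second ingredient\,---\,the claim that $\Phi_n(q)\mid S(a,q)$ for indeterminate $a$\,---\,is simply false, and with it the whole argument collapses. The case $n=3$ already exhibits the failure: since $(n-1)/2=1$ and the $k=1$ term of $S(a,q)$ carries an explicit factor $a$, we have $S(0,q)=1$, which is plainly not divisible by $\Phi_3(q)$. More generally one checks that $S(a,\omega)\ne0$ for generic $a$ at a primitive cube root $\omega$, so the parametric congruence modulo $\Phi_n(q)$ you hope for does not hold.

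The reason the root-of-unity analysis cannot deliver this divisibility is that neither the regime of Theorem~\ref{th:baby1} nor that of Theorem~\ref{th:baby2} applies to the $b=c=1$ specialisation of \eqref{eq:qDixon-3}. Count the zeros of the summand $c_q(k)$ at $k=\ell n$ and $q=\zeta$ a primitive $n$-th root of unity: the numerator factor $(q^2;q^4)_k^2$ vanishes to order $2\ell$, the denominator factor $(q^4;q^4)_k$ only to order $\ell$, while the $a$-dependent piece $(a^2q^2;q^4)_k/(a^2q^4;q^4)_k^2$ contributes the finite nonzero value $(1-a^{2n})^{-\ell}$ for generic $a$ (so these factors do \emph{not} ``collapse to $1$'' as you assert). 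Hence $c_\zeta(\ell n)=0$ for every $\ell\ge1$, and the prefactor series $\sum_{\ell\ge0}c_\zeta(\ell n)$ equals $c_\zeta(0)=1$, a finite nonzero constant\,---\,it does not diverge. Meanwhile the right-hand side of \eqref{eq:qDixon-3} at $b=c=1$ is bounded but does not tend to $0$ as $q\to\zeta$: every infinite Pochhammer on that side carries the parameter $a$, so none is forced to vanish. The microscopic conclusion is therefore $\sum_{k=0}^{(n-1)/2}c_\zeta(k)=\text{(something nonzero)}$, exactly the opposite of what you need. Without the second ingredient, your first ingredient together with the coprimality bookkeeping only recovers the congruence modulo $\Phi_n(q)\Phi_n(-q)$, which is precisely \eqref{eq:qDixon-2} already proved in the paper; the extra factor $\Phi_n(q)$ remains out of reach, and this is why the statement is left open.
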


\subsection{A congruence from Andrews' $q$-analogue of Gauss' $_2F_1(-1)$ sum}
\label{s4.7}
It is proved in \cite[eq.~(2.6)]{GZ2014} that, for $p$ a prime of the form $4\ell+3$,
$$
\sum_{k=0}^{p-1}\frac{(q;q^2)_k^2 q^{2k}}{(q^2;q^2)_k(q^4;q^4)_k}
=\sum_{k=0}^{p-1}\frac{(q;q^2)_k^2 q^{2k}}{(q^2;q^2)_k^2 (-q^2;q^2)_k}
\equiv 0  \pmod{[p]^2}.
$$
We now give a two-parameter extension of this congruence.

\begin{theorem}\label{eq:last-thm}
Let $n\equiv 3\pmod{4}$ be a positive integer. Then
\begin{equation}
\sum_{k=0}^{m}\frac{(aq;q^2)_k (bq;q^2)_k q^{2k}}{(q^2;q^2)_k (abq^4;q^4)_k}
\equiv 0\pmod{(1-aq^n)(1-bq^n)};
\label{eq:qab-1}
\end{equation}
in particular,
\begin{equation}
\sum_{k=0}^{(n-1)/2}\frac{(q;q^2)_k ^2 q^{2k}}{(q^2;q^2)_k (q^4;q^4)_k}\equiv 0\pmod{\Phi_n(q)^2}.  \label{eq:qab-2}
\end{equation}
\end{theorem}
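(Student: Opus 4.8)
The plan is to prove \eqref{eq:qab-1} by truncating a suitable specialization of Andrews' $q$-analogue of Gauss' $_2F_1(-1)$ sum, exactly in the style of the preceding subsections. Replacing $q$ by $q^2$ in that summation and then taking $a\to aq$, $b\to bq$ produces
\[
\sum_{k=0}^\infty \frac{(aq;q^2)_k(bq;q^2)_k}{(q^2;q^2)_k(abq^4;q^4)_k}\,q^{2k}
=\frac{(aq^3;q^4)_\infty(bq^3;q^4)_\infty}{(q^2;q^4)_\infty(abq^4;q^4)_\infty},
\]
whose left-hand side is precisely the summand of \eqref{eq:qab-1}. Thus the truncations in \eqref{eq:qab-1} are truncations of this infinite identity.

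To prove divisibility by $1-aq^n$ I would set $a=q^{-n}$. Then the factor $(aq;q^2)_k=(q^{1-n};q^2)_k$ vanishes for $k>(n-1)/2$, so the series terminates and its truncations at $m=(n-1)/2$ and at $m=n-1$ both coincide with the full terminating sum; by the summation formula this equals the displayed right-hand side evaluated at $a=q^{-n}$. The decisive observation\,---\,and the only place where the hypothesis $n\equiv3\pmod4$ enters\,---\,is that the numerator then carries the factor $(q^{3-n};q^4)_\infty=\prod_{j\ge0}(1-q^{3-n+4j})$, which vanishes at $j=(n-3)/4\ge0$, while the remaining factors are nonzero with $b$ kept as an indeterminate; hence the sum is identically $0$ when $a=q^{-n}$. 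Since the summand is symmetric in $a$ and $b$, the substitution $b=q^{-n}$ yields divisibility by $1-bq^n$ in the same way, and coprimality of $1-aq^n$ and $1-bq^n$ delivers \eqref{eq:qab-1}.

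For the special case \eqref{eq:qab-2} I would let $a,b\to1$. For $0\le k\le(n-1)/2<n$ the limiting denominators $(q^2;q^2)_k$ and $(q^4;q^4)_k$ are coprime to $\Phi_n(q)$, because $n$ is odd; writing the left-hand side of \eqref{eq:qab-1} as $(1-aq^n)(1-bq^n)\,T(a,b)$ with $T$ regular at $a=b=1$, the limit produces $(1-q^n)^2\,T(1,1)$, and $\Phi_n(q)^2\mid(1-q^n)^2$ gives the asserted congruence modulo $\Phi_n(q)^2$. This mirrors the passage from Theorem~\ref{thm:8k+1-a} to Theorem~\ref{thm:8k+1}.

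I expect the main obstacle to be not the hypergeometric input but the bookkeeping of this final limit: one must justify that the two distinct coprime factors $1-aq^n$ and $1-bq^n$ may be specialized simultaneously at $a=b=1$ without losing a power of $\Phi_n(q)$, which reduces to checking that the $a,b$-dependent denominators $(abq^4;q^4)_k$ stay coprime to $\Phi_n(q)$ throughout\,---\,precisely the mechanism already exploited in the proof of Theorem~\ref{thm:8k+1}. A lesser point to confirm is the legitimacy of the terminating specialization $a=q^{-n}$, namely that no summand denominator degenerates for $0\le k\le(n-1)/2$, which holds once $b$ is kept generic.
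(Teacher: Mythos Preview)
Your overall strategy matches the paper's exactly: terminate Andrews' $q$-Gauss sum at $a=q^{-n}$ (and symmetrically at $b=q^{-n}$), observe that the product side vanishes because $n\equiv3\pmod4$, and then specialize $a,b\to1$. The passage to \eqref{eq:qab-2} and the coprimality bookkeeping are fine.

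However, the identity you display is \emph{not} Andrews' $q$-analogue of Gauss' ${}_2F_1(-1)$ sum. Carrying out the substitutions $q\to q^2$, $a\to aq$, $b\to bq$ in that sum produces
\[
\sum_{k=0}^\infty \frac{(aq;q^2)_k(bq;q^2)_k\,q^{k^2+k}}{(q^2;q^2)_k(abq^4;q^4)_k}
=\frac{(aq^3;q^4)_\infty(bq^3;q^4)_\infty}{(q^2;q^4)_\infty(abq^4;q^4)_\infty},
\]
with the quadratic exponent $q^{k^2+k}$, not $q^{2k}$; the quadratic power is the hallmark of Andrews' identity and cannot be dropped. Consequently, your termination argument yields the congruence
\[
\sum_{k=0}^{m}\frac{(aq;q^2)_k (bq;q^2)_k\, q^{k^2+k}}{(q^2;q^2)_k (abq^4;q^4)_k}
\equiv 0\pmod{(1-aq^n)(1-bq^n)},
\]
which is \emph{not} \eqref{eq:qab-1}. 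The paper repairs this with one extra step you are missing: apply the rearrangement $a\to a^{-1}$, $b\to b^{-1}$, $q\to q^{-1}$ to the congruence just obtained. A routine computation with $q$-Pochhammer inversion shows that this sends the summand with $q^{k^2+k}$ to the summand with $q^{2k}$ (up to a unit), while the modulus $(1-aq^n)(1-bq^n)$ is preserved up to units. Only after this inversion does one arrive at \eqref{eq:qab-1}.
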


\begin{proof}
Making the substitutions $q\to q^2$, $a\to aq$ and $b\to bq$ in Andrews' $q$-analogue of Gauss' $_2F_1(-1)$ sum
(see \cite{Andrews73,Andrews74} or \cite[Appendix (II.11)]{GR04}), we obtain
\begin{equation}
\sum_{k=0}^\infty \frac{(aq;q^2)_k(bq;q^2)_k
q^{k^2+k}}{(q^2;q^2)_k(abq^4;q^4)_k} =\frac{(aq^3;q^4)_\infty
(bq^3;q^4)_\infty}{(q^2;q^4)_\infty (abq^4;q^4)_\infty},
\label{eq:andrews}
\end{equation}
Since $n\equiv 3\pmod 4$, taking $a=q^{-n}$ or $b=q^{-n}$ in \eqref{eq:andrews} we see that the left-hand side terminates, while the right-hand side vanishes. This proves that
$$
\sum_{k=0}^{m}\frac{(aq;q^2)_k (bq;q^2)_k q^{k^2+k}}{(q^2;q^2)_k (abq^4;q^4)_k}
\equiv 0\pmod{(1-aq^n)(1-bq^n)},
$$
which after the rearrangement $a\to a^{-1}$, $b\to b^{-1}$ and $q\to q^{-1}$
becomes the congruence \eqref{eq:qab-1}.
Letting $a\to 1$ and $b\to 1$ in \eqref{eq:qab-1} we arrive at \eqref{eq:qab-2}.
\end{proof}

Motivated by \cite[Theorem 2.5]{GZ2014} we observe the following generalization of Theorem~\ref{eq:last-thm}.

\begin{conjecture}
Let $n$ be a positive odd integer. Then
\begin{align*}
&
\sum_{k=0}^{m}\frac{(aq;q^2)_k (bq;q^2)_k (x;q^2)_k q^{2k}}{(q^2;q^2)_k (abq^4;q^4)_k}
\\ &\qquad
\equiv (-1)^{(n-1)/2}\sum_{k=0}^{m}\frac{(aq;q^2)_k (bq;q^2)_k (-x;q^2)_k q^{2k}}
{(q^2;q^2)_k (abq^4;q^4)_k} \pmod{(1-aq^n)(1-bq^n)}.
\end{align*}
\end{conjecture}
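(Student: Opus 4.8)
The plan is to exploit that the modulus $(1-aq^n)(1-bq^n)$ is a product of two coprime polynomials, so by the convention introduced in Section~\ref{s1} it suffices to establish the congruence separately modulo $1-aq^n$ and modulo $1-bq^n$. Moreover, both the left- and right-hand sides are manifestly invariant under the interchange $a\leftrightarrow b$, the summand $(aq;q^2)_k(bq;q^2)_k/(abq^4;q^4)_k$ being symmetric, and this interchange permutes the two factors of the modulus; hence the congruence modulo $1-bq^n$ follows from the one modulo $1-aq^n$ by symmetry. Thus everything reduces to a single verification, obtained by setting $a=q^{-n}$.

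After the substitution $a=q^{-n}$ the factor $(aq;q^2)_k=(q^{1-n};q^2)_k$ acquires the zero $1-q^{1-n+2\cdot(n-1)/2}=0$ as soon as $k>(n-1)/2$, so the summand vanishes for $k>(n-1)/2$ and both choices $m=n-1$ and $m=(n-1)/2$ collapse to the same terminating sum (the denominators $(q^2;q^2)_k(abq^4;q^4)_k$ remaining coprime to $1-aq^n$, so that the evaluation at $a=q^{-n}$ is legitimate in the sense of Section~\ref{s1}). Writing
$$
S(x)=\sum_{k=0}^{(n-1)/2}\frac{(q^{1-n};q^2)_k(bq;q^2)_k(x;q^2)_kq^{2k}}{(q^2;q^2)_k(bq^{4-n};q^4)_k},
$$
a polynomial in $x$ of degree at most $(n-1)/2$, it remains to prove the finite identity $S(x)=(-1)^{(n-1)/2}S(-x)$; equivalently, that $S(x)$ has the same parity in $x$ as the monomial $x^{(n-1)/2}$. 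A direct check confirms this for small $n$: one finds $S(x)=x$ when $n=3$, and $S(x)=1-B+Bx^2$ with $B=(1-bq)/(1-bq^{-1})$ when $n=5$. Reassuringly, at $x=0$ and $n\equiv3\pmod4$ the identity recovers the vanishing \eqref{eq:qab-1} established in Theorem~\ref{eq:last-thm}, so the conjecture genuinely contains that result.

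To establish $S(x)=(-1)^{(n-1)/2}S(-x)$ in general I would start from the terminating transformation behind \cite[Theorem~2.5]{GZ2014}, which motivated the conjecture and whose $x=0$ specialization should reduce to the case of Andrews' sum~\eqref{eq:andrews} used in the proof of Theorem~\ref{eq:last-thm}; the hope is that this transformation carries exactly the $x\mapsto-x$ symmetry required. An alternative is to reverse the order of summation $k\mapsto(n-1)/2-k$ in $S(x)$ and simplify the resulting $q$-Pochhammer ratios, the reflection producing both the overall sign $(-1)^{(n-1)/2}$ and the conversion of $(x;q^2)_k$ into a $(-x;q^2)$-type factor. The main obstacle is precisely this last step: the summand mixes the base $q^2$, in $(q^{1-n};q^2)_k$, $(bq;q^2)_k$, $(x;q^2)_k$ and $(q^2;q^2)_k$, with the base $q^4$ in the denominator $(bq^{4-n};q^4)_k$, so the base-$q^4$ factor does not reverse in step with the base-$q^2$ factors over $(n-1)/2$ terms, and extracting the exact sign while correctly flipping $x$ requires handling this mismatch with care, most likely by first recasting the terminating series as a balanced or well-poised basic hypergeometric series to which a known quadratic transformation possessing an $x\mapsto-x$ symmetry applies.
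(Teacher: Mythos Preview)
The statement you are attempting is labelled a \emph{Conjecture} in the paper, not a Theorem; the paper offers no proof of it, only the remark that it is motivated by \cite[Theorem~2.5]{GZ2014} and generalizes Theorem~\ref{eq:last-thm}. So there is no ``paper's own proof'' to compare your attempt against.

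Your reduction is sound and is exactly the natural first move, mirroring the proof of Theorem~\ref{eq:last-thm}: the symmetry $a\leftrightarrow b$ together with the coprimality of $1-aq^n$ and $1-bq^n$ legitimately reduces the task to the single specialization $a=q^{-n}$; the termination at $k=(n-1)/2$ and the reformulation as the parity statement $S(x)=(-1)^{(n-1)/2}S(-x)$ are both correct. However, your write-up stops short of a proof precisely at the crucial point. You verify the identity only for $n=3,5$, then describe two possible strategies (invoking the transformation behind \cite[Theorem~2.5]{GZ2014}, or reversing the order of summation) while explicitly flagging the base-mixing $q^2$/$q^4$ obstacle that prevents the reversal from going through cleanly. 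That is an honest assessment of where the difficulty lies, but it means the argument has a genuine gap: the identity $S(x)=(-1)^{(n-1)/2}S(-x)$ is asserted, checked in tiny cases, and left unproved in general. This is entirely consistent with the statement's conjectural status in the paper\,---\,but it is not a proof.
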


When $x=0$ and $n\equiv 3\pmod 4$ this indeed reduces to
Theorem~\ref{eq:last-thm}. Inspired by \cite[Conjecture 7.3]{GZ2014},
we believe that the following further generalization is true as
well.

\begin{conjecture}
Let $d$, $n$ and $r$ be positive integers with $r<d$, $\gcd(d,n)=1$, and $n$ odd.
Then, modulo $(1-aq^{n\langle r/n\rangle_d})(1-bq^{n\langle(d-r)/n\rangle_d})$,
\begin{align*}
\sum_{k=0}^{n-1}\frac{(aq^r;q^d)_k (bq^{d-r};q^d)_k (x;q^{d})_k q^{dk}}{(q^d;q^d)_k (abq^{2d};q^{2d})_k}
\equiv (-1)^{\langle -r/d\rangle_n}\sum_{k=0}^{n-1}\frac{(aq^r;q^d)_k (bq^{d-r};q^d)_k (-x;q^{d})_k q^{dk}}{(q^d;q^d)_k (abq^{2d};q^{2d})_k},
\end{align*}
where $\langle z\rangle_s$ denotes the least non-negative residue of $z$ modulo $s$.
\end{conjecture}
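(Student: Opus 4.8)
The plan is to imitate the terminating-specialisation argument that proves Theorem~\ref{eq:last-thm}, upgraded so as to track the parameter~$x$. Write $s_1=\langle r/n\rangle_d$ and $s_2=\langle(d-r)/n\rangle_d$, so that $ns_1\equiv r$ and $ns_2\equiv-r\pmod d$. The modulus $(1-aq^{ns_1})(1-bq^{ns_2})$ is a product of two coprime polynomials (one involves $a$ but not $b$, the other $b$ but not $a$), so, exactly as with the coprime triple $[n],1-aq^n,a-q^n$ in the proof of Theorem~\ref{thm:8k+1-a}, it suffices to establish the congruence modulo each factor separately and then recombine. Moreover each summand is invariant under the involution $(a,r)\leftrightarrow(b,d-r)$, which also interchanges $s_1\leftrightarrow s_2$ and hence the two factors of the modulus. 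The sign is preserved too: since $-rd^{-1}-(d-r)d^{-1}\equiv-1\pmod n$ (with $d^{-1}$ the inverse of $d$ modulo $n$), the residues $\langle-r/d\rangle_n$ and $\langle-(d-r)/d\rangle_n$ both lie in $[0,n-1]$ and sum to $\equiv-1$, i.e.\ to exactly $n-1$; as $n$ is odd this is even, whence $(-1)^{\langle-r/d\rangle_n}=(-1)^{\langle-(d-r)/d\rangle_n}$. Therefore proving the congruence modulo $1-aq^{ns_1}$ automatically yields the one modulo $1-bq^{ns_2}$ by applying this involution, and only one factor has to be treated.

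For the factor $1-aq^{ns_1}$ I would specialise $a=q^{-ns_1}$. Then $(aq^r;q^d)_k=(q^{\,r-ns_1};q^d)_k=(q^{-du_1};q^d)_k$ with $u_1=(ns_1-r)/d\ge0$, which vanishes for $k>u_1$; since $1\le s_1\le d-1$ and $r\ge1$ force $u_1\le n-1$, both the $x$-sum and the $(-x)$-sum collapse to terminating sums running up to $k=u_1$, while $abq^{2d}=bq^{2d-ns_1}$. The claim becomes the finite identity
$$\sum_{k=0}^{u_1}\frac{(q^{-du_1};q^d)_k(bq^{d-r};q^d)_k(x;q^d)_k\,q^{dk}}{(q^d;q^d)_k(bq^{2d-ns_1};q^{2d})_k}=(-1)^{\langle-r/d\rangle_n}\sum_{k=0}^{u_1}\frac{(q^{-du_1};q^d)_k(bq^{d-r};q^d)_k(-x;q^d)_k\,q^{dk}}{(q^d;q^d)_k(bq^{2d-ns_1};q^{2d})_k}.$$
The engine for this should be the terminating transformation underlying \cite[Theorem~2.5]{GZ2014}, namely the $x$-enriched form of Andrews' $q$-analogue of Gauss' $_2F_1(-1)$ sum used in the proof of Theorem~\ref{eq:last-thm}. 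Concretely I would first pass to the quadratic-exponent normalisation by the rewriting $q\to q^{-1}$, $a\to a^{-1}$, $b\to b^{-1}$ (as in the closing step of the proof of Theorem~\ref{eq:last-thm}), so that the summand matches an Andrews-type $q^{k^2+k}$-weighted series, and then exploit the quadratic structure, in which $x$ and $-x$ enter symmetrically through $(x;q^d)_k(-x;q^d)_k=(x^2;q^{2d})_k$, to trade $(x;q^d)_k$ for $(-x;q^d)_k$ at the cost of the prefactor $(-1)^{\langle-r/d\rangle_n}$.

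The main obstacle is precisely this finite identity, and in particular pinning down the transformation that sends $x\mapsto-x$. A plain reversal $k\mapsto u_1-k$ of the terminating sum produces \emph{reciprocal} arguments $q^{d(1-u_1)}/x$ rather than negated ones, so the $x\mapsto-x$ symmetry must be supplied by a genuine quadratic (Andrews-type) transformation, combined with the special balancing $a=q^{-ns_1}$ that couples the terminating length $u_1$, the two bases $q^d$ and $q^{2d}$, and the surviving parameter $bq^{2d-ns_1}$; checking that this balancing is exactly what the transformation requires, and that it outputs the correct power of $-1$, is the delicate point. As a consistency check, at $x=0$ both sums coincide and the asserted congruence degenerates to $\bigl(1-(-1)^{\langle-r/d\rangle_n}\bigr)S\equiv0$, which is vacuous when the sign is $+1$ and, when the sign is $-1$, recovers the vanishing congruence \eqref{eq:qab-1} of Theorem~\ref{eq:last-thm} through the instance $d=2$, $r=1$, $n\equiv3\pmod4$. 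Recombining the two single-factor congruences by coprimality then completes the argument.
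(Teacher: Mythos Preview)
The paper does not supply a proof of this statement: it is listed as a \emph{Conjecture} (in Section~\ref{s4.7}), explicitly described as ``inspired by \cite[Conjecture~7.3]{GZ2014}'', and left open. So there is no proof in the paper to compare yours against.

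As to your proposal itself, the reduction to a single factor is fine: coprimality of $1-aq^{ns_1}$ and $1-bq^{ns_2}$ is clear, the involution $(a,r)\leftrightarrow(b,d-r)$ does swap the two factors, and your parity computation showing $(-1)^{\langle-r/d\rangle_n}=(-1)^{\langle-(d-r)/d\rangle_n}$ is correct. The terminating specialisation $a=q^{-ns_1}$ also works as you describe, collapsing both sides to finite sums of length~$u_1+1\le n$.

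The genuine gap is exactly where you locate it yourself: the finite identity relating the $x$-sum to the $(-x)$-sum. You do not prove it; you only gesture at ``the terminating transformation underlying \cite[Theorem~2.5]{GZ2014}'' and an ``Andrews-type'' quadratic mechanism, without exhibiting any concrete transformation that takes $(x;q^d)_k$ to $(-x;q^d)_k$ while keeping the remaining parameters intact and producing the sign $(-1)^{\langle-r/d\rangle_n}$. Your own remark that a plain reversal $k\mapsto u_1-k$ yields reciprocal rather than negated arguments shows that no obvious symmetry is available, and the observation that $(x;q^d)_k(-x;q^d)_k=(x^2;q^{2d})_k$ does not by itself supply a map $x\mapsto-x$ on a single factor. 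Note also that the paper traces the present statement back to a \emph{conjecture} in \cite{GZ2014}, not to a theorem; this strongly suggests that the transformation you need is not currently in the literature. Until that identity is actually written down and verified, the argument is a plausible outline rather than a proof.
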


\section{Concluding remarks and open problems}
\label{s5}

Since Ramanujan's formula \eqref{ram1} has a WZ proof \cite{Gu06},
it is natural to ask whether there is a $q$-WZ proof of its $q$-analogue~\eqref{q4}.
If this is the case, then the corresponding $q$-WZ pair will possibly lead to another proof of Theorem~\ref{thm:8k+1}.

The equality \eqref{ram1} motivates considering different families of congruences, like
\begin{align}
\sum_{k=0}^{n}(8k+1){4k\choose 2k}{2k\choose k}^2
2^{8(n-k)}3^{2(n-k)}\equiv 0 \pmod{{2n\choose n}}, \label{eq:2n-n}
\displaybreak[2]\\
\sum_{k=0}^{n}(8k+1){4k\choose 2k}{2k\choose k}^2
2^{8(n-k)}3^{2(n-k)}\equiv 0 \pmod{{3n\choose n}}, \label{eq:3n-n}
\displaybreak[2]\\
\sum_{k=0}^{n}(8k+1){4k\choose 2k}{2k\choose k}^2
2^{8(n-k)}3^{2(n-k)}\equiv 0 \pmod{{4n\choose n}}, \label{eq:4n-n}
\displaybreak[2]\\
\sum_{k=0}^{n}(8k+1){4k\choose 2k}{2k\choose k}^2
2^{8(n-k)}3^{2(n-k)}\equiv 0 \pmod{{4n\choose 2n}}, \label{eq:4n-2n}
\end{align}
which we observe numerically, and whose proofs can be accessible to the WZ method.
In view of the congruence
$$
\sum_{k=0}^{n}(-1)^k q^{k^2}[4k+1]{2k\brack k}^3 \frac{(-q;q)_n^6}{(-q;q)_k^6}
\equiv 0\pmod{(1+q^n)^2[2n+1]{2n\brack n}}.
$$
established in \cite[Theorem 1.4]{Guo2018} by the $q$-WZ method (see \cite{Guo2019,GW} for some other congruences
related to $q$-binomial coefficients), we hypothesize the truth of the following
$q$-analogues of \eqref{eq:2n-n} and \eqref{eq:3n-n}.

\begin{conjecture}
\label{conj1}
Let $n$ be a positive integer. Then
\begin{align*}
\sum_{k=0}^{n}{4k\brack 2k}{2k\brack k}^2 \frac{(-q;q)_n^4(-q;q)_{2n}^2 (q^2;q^2)_k^2 (q^6;q^6)_n^2}{(-q;q)_k^4(-q;q)_{2k}^2 (q^2;q^2)_n^2 (q^6;q^6)_k^2} [8k+1]q^{2k^2}
&\equiv 0 \pmod{{2n\brack n}}, \\
\sum_{k=0}^{n}{4k\brack 2k}{2k\brack k}^2 \frac{(-q;q)_n^4(-q;q)_{2n}^2 (q^2;q^2)_k^2 (q^6;q^6)_n^2}{(-q;q)_k^4(-q;q)_{2k}^2 (q^2;q^2)_n^2 (q^6;q^6)_k^2} [8k+1]q^{2k^2}
&\equiv 0 \pmod{{3n\brack n}}.
\end{align*}
\end{conjecture}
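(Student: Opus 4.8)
The plan is to reduce the conjecture to the radial-asymptotics machinery of Sections~\ref{s2} and~\ref{s3}. First I would note that the $n$-dependence of the summand factors out. Writing $c(k)$ for the $k$-th term of the series~\eqref{q4}, a routine simplification of the $q$-Pochhammer symbols (using $(q^2;q^2)_k=(q;q)_k(-q;q)_k$, $(q;q)_{4k}=(q;q^2)_{2k}(q^2;q^2)_{2k}$ and $\qbin{4k}{2k}\qbin{2k}{k}^2=(q;q)_{4k}/(q;q)_k^4$) shows that the summand equals $P(n)\,c(k)$ with $P(n)=(-q;q)_n^4(-q;q)_{2n}^2(q^6;q^6)_n^2/(q^2;q^2)_n^2$. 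Since $(-q;q)_m\to2^m$ and $(q^6;q^6)_m/(q^2;q^2)_m\to3^m$ as $q\to1$, the factor $P(n)$ is the $q$-analogue of $2^{8n}3^{2n}$, so the two assertions are the $q$-lifts of \eqref{eq:2n-n} and \eqref{eq:3n-n} and read
\[
S(n):=P(n)\sum_{k=0}^{n}c(k)\equiv0\pmod{\qbin{2n}{n}}
\quad\text{and}\quad
S(n)\equiv0\pmod{\qbin{3n}{n}}.
\]

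Next I would pass to cyclotomic factors. Both $\qbin{2n}{n}$ and $\qbin{3n}{n}$ are products of \emph{distinct} $\Phi_d(q)$: with $r=n\bmod d$ one has $\Phi_d\mid\qbin{2n}{n}$ exactly when $\lfloor2r/d\rfloor=1$ (that is, $r\ge d/2$), and $\Phi_d\mid\qbin{3n}{n}$ exactly when $\lfloor3r/d\rfloor-\lfloor2r/d\rfloor=1$ (that is, $r\in[d/3,d/2)\cup[2d/3,d)$). So it suffices to show $\Phi_d(q)\mid S(n)$ for each such $d$; fixing a primitive $d$-th root of unity $\zeta$, this is the statement $\lim_{q\to\zeta}S(n)=0$.

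For $d$ coprime to $6$ this should follow cleanly from what is already available. A valuation count gives multiplicity $0$ for $\Phi_d$ in $P(n)$, so $P(\zeta)\ne0$, while every term $c(k)$ is regular at $\zeta$ by the computation in Section~\ref{s3}; hence $\lim_{q\to\zeta}S(n)=P(\zeta)\sum_{k=0}^{n}c_\zeta(k)$. The block decomposition of Section~\ref{s3} makes each full block of length $d$ vanish through \eqref{sum1}, leaving a single partial block proportional to $\sum_{j=0}^{r}c_\zeta(j)$. The decisive point is that the numerator factor $(q;q^2)_{2k}$ forces $c_\zeta(j)=0$ already for $j>\lfloor(d-1)/4\rfloor$, and each of the residue windows above satisfies $r\ge\lfloor(d-1)/4\rfloor$; the partial sum therefore collapses to $\sum_{j=0}^{(d-1)/2}c_\zeta(j)=0$ by \eqref{sum1/2}. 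This settles both moduli whenever $\gcd(d,6)=1$.

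The hard part will be the residues $d$ with $\gcd(d,6)>1$. There $\zeta^2,\zeta^3,\zeta^6$ are no longer all primitive $d$-th roots, so the reductions \eqref{eq:zeta-sim} and the clean block evaluation of Section~\ref{s3} break down; at the same time $P(n)$ now \emph{does} carry $\Phi_d$ to a positive power while $\sum_{k=0}^{n}c(k)$ develops a pole at $\zeta$, and the divisibility has to come from balancing the order of the zero of $P(n)$ against the order of that pole (already at $d=2$, $n=1$ one meets $\Phi_2^6$ in $P(1)$ against a pole of order $4$, with net order $\ge2$). Carrying out this order-of-vanishing bookkeeping uniformly in $d$ and $n$ is the main obstacle. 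A uniform alternative, following the $q$-WZ proof in~\cite{Guo2018} of the structurally identical congruence recalled just above, would be to locate a genuine $q$-WZ pair for~\eqref{q4} — the $q$-analogue of Guillera's WZ pair for~\eqref{ram1}, whose very existence is posed as an open question at the start of Section~\ref{s5} — and then verify that its certificate is divisible by both $\qbin{2n}{n}$ and $\qbin{3n}{n}$; producing that pair is itself the bottleneck of this route.
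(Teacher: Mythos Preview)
There is nothing to compare against: in the paper this statement is \emph{Conjecture}~\ref{conj1}, expressly listed among the open problems of Section~\ref{s5}, and no proof is offered. The paper does record the factorisation you rediscover, namely that the left-hand side equals $P(n)\sum_{k=0}^{n}c(k)$ with $P(n)=(-q;q)_n^4(-q;q)_{2n}^2(q^6;q^6)_n^2/(q^2;q^2)_n^2$ and $c(k)$ the $k$-th term of~\eqref{q4}, but it stops there.

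Your argument for cyclotomic factors $\Phi_d$ with $\gcd(d,6)=1$ is sound: $P(n)$ is a unit at such $\zeta$, each $c_\zeta(j)$ is regular, the full blocks vanish by~\eqref{sum1}, and since $c_\zeta(j)=0$ for $j\ge\lceil(d+1)/4\rceil$ while the residue windows for both $\qbin{2n}{n}$ and $\qbin{3n}{n}$ force $r\ge\lceil d/3\rceil\ge\lceil(d+1)/4\rceil-1$, the tail collapses to~\eqref{sum1/2}. But this is only a partial result, and you say so yourself: the factors $\Phi_d$ with $\gcd(d,6)>1$ genuinely occur in $\qbin{2n}{n}$ and $\qbin{3n}{n}$ (already $\Phi_2\mid\qbin{2n}{n}$ for odd~$n$, $\Phi_3\mid\qbin{3n}{n}$ whenever $3\nmid n$), and at such roots the terms $c(k)$ develop honest poles, so neither~\eqref{sum1} nor~\eqref{sum1/2} is available. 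Your two proposed routes through this --- a uniform order-of-vanishing count balancing the zero of $P(n)$ against the pole of the sum, or the construction of a $q$-WZ pair for~\eqref{q4} --- are exactly the obstacles the paper leaves open at the start of Section~\ref{s5}. So what you have is a strategy and a verified easy case, not a proof; the conjecture remains open.
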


The expression on the left-hand sides is clearly a polynomial in $q$, and it can also be written as
\begin{align*}
\frac{(-q;q)_n^4 (-q;q)_{2n}^2 (q^6;q^6)_n^2}{ (q^2;q^2)_n^2}\sum_{k=0}^{n}\frac{(q;q^2)_k^2
(q;q^2)_{2k}}{(q^2;q^2)_{2k}(q^6;q^6)_k^2 }[8k+1]q^{2k^2}.
\end{align*}
However, similar natural $q$-analogues of \eqref{eq:4n-n} and \eqref{eq:4n-2n} do not hold in general.

As somewhat complementary to Theorem~\ref{th:4.2}, we have the following collection of parametric congruences.

\begin{conjecture}
\label{th:4.2+}
Let $d$ and $n$ be positive integers with $n\equiv -1\pmod{d}$. Then
\begin{align*}
\sum_{k=0}^{n-1}[2dk+1]\frac{(aq;q^d)_k (q/a;q^d)_k (bq;q^d)_k (q/b;q^d)_k}
{(aq^d;q^d)_k(q^d/a;q^d)_k (bq^d;q^d)_k (q^d/b;q^d)_k}q^{(d-2)k}
&\equiv 0\pmod{[n]}
\\ \intertext{and, for $d\ne2$,}
\sum_{k=0}^{n-1}[2dk+1]\frac{(aq;q^d)_k (q/a;q^d)_k (q;q^d)_k^2}
{(aq^d;q^d)_k(q^d/a;q^d)_k (q^d;q^d)_k^2}q^{(d-2)k}
&\equiv 0\pmod{[n]\Phi_n(q)}.
\end{align*}
Furthermore, for the particular case $d=2$, we also have a `shorter' congruence
\begin{equation*}
\sum_{k=0}^{(n-1)/2}[4k+1]\frac{(aq;q^2)_k (q/a;q^2)_k (bq;q^2)_k (q/b;q^2)_k}
{(aq^2;q^2)_k(q^2/a;q^2)_k (bq^2;q^2)_k (q^2/b;q^2)_k}
\equiv 0\pmod{[n]}.
\end{equation*}
\end{conjecture}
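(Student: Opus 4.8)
The plan is to attack all three congruences by the creative-microscoping method of Theorems~\ref{th:4.1} and~\ref{th:4.2}, reducing each to the behaviour at roots of unity of the underlying basic hypergeometric sum. Write $c_q(k)$ for the common summand, so that
\[
c_q(k)=[2dk+1]\,\frac{(aq;q^d)_k(q/a;q^d)_k(bq;q^d)_k(q/b;q^d)_k}{(aq^d;q^d)_k(q^d/a;q^d)_k(bq^d;q^d)_k(q^d/b;q^d)_k}\,q^{(d-2)k}.
\]
The feature that governs everything is that $c_q(k)$ carries the very-well-poised prefactor $[2dk+1]=(1-q^{1+2dk})/(1-q)$ for base $q^d$ and parameter $q$, together with the four well-poised pairs built from $aq,q/a,bq,q/b$, but \emph{not} the central pair $(q;q^d)_k/(q^d;q^d)_k$. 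Whether that central pair is present is precisely what separates the accessible case from the open ones.

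First I would settle the middle congruence, the one with $(q;q^d)_k^2/(q^d;q^d)_k^2$, where the choice $b=1$ silently restores the central pair. Indeed, the infinite form of this sum is exactly the very-well-poised ${}_6\phi_5$ of~\eqref{Eq:6phi5} with base $q^d$, first parameter $q$, remaining parameters $aq,q/a,q$, and argument $z=q^{d-2}$; it converges for $d\ge3$ (which is why $d\ne2$ is imposed), and its value is a ratio of infinite products whose numerator carries the factor $(q^{d-1};q^d)_\infty$. For a primitive $e$-th root of unity $\zeta$ with $e\mid n$, $e>1$, the hypothesis $n\equiv-1\pmod d$ gives $\gcd(n,d)=1$, hence $\gcd(e,d)=1$ and $\zeta^d$ is again a primitive $e$-th root, so $(q^{d-1};q^d)_\infty$ vanishes as $q\to\zeta$ radially. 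Splitting $k=\ell e+j$ by Lemma~\ref{prop:root} and passing to the limit as in Theorems~\ref{th:baby2} and~\ref{conj:one} (the indeterminate period ratios resolved by L'H\^opital's rule) yields divisibility by $[n]$, while the \emph{squared} factor $(q;q^d)_k^2$ contributes a double zero to each term past the resonance index at a primitive $n$-th root; the resulting order count, of the kind used to upgrade~\eqref{q4a} to modulus $[n]\Phi_n(q)^2$ in Theorem~\ref{thm:8k+1}, should promote the congruence to $[n]\Phi_n(q)$.

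Next I would reduce the first and third congruences --- the genuinely two-parameter ones --- to a single finite identity at each root of unity. A direct computation with Lemma~\ref{prop:root} shows that for general $a,b$ the outer coefficient $c_\zeta(\ell e)$ equals $1$ and that $c_\zeta(\ell e+j)=c_\zeta(\ell e)\,c_\zeta(j)$, so the block decomposition collapses to $\sum_{k=0}^{n-1}c_\zeta(k)=\tfrac{n}{e}\sum_{j=0}^{e-1}c_\zeta(j)$, and the half-range sum reduces to full blocks plus one partial block. Each congruence is thereby equivalent to the vanishing of the full-period block sum $\sum_{j=0}^{e-1}c_\zeta(j)$ (respectively its half-range analogue) for every primitive $e$-th root $\zeta$ with $e\mid n$, $e>1$.

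The hard part --- and the reason these two congruences are stated only conjecturally --- is precisely this vanishing. With the central pair absent when $b\ne1$, the block sum $\sum_{j=0}^{e-1}c_\zeta(j)$ is well-poised but not very-well-poised, so neither Watson's~\eqref{Eq:6phi5} nor its terminating form~\eqref{eq:6phi5} applies; moreover, since $n\equiv-1\pmod d$ forces $d\nmid(n-1)$ for $d\ge3$, none of the specializations $a=q^{\pm n}$, $b=q^{\pm n}$ truncates the sum into a shape summable by Jackson's ${}_8W_7$ evaluation, in contrast with the situation exploited for Theorem~\ref{thm:conj-a=qn}. I would therefore pursue two routes: a higher transformation, a ${}_{10}W_9\to{}_8W_7$ reduction in the spirit of the proof of Theorem~\ref{thm:conj-a=qn}, that might put the block sum in closed form; or a reflection $j\mapsto e-1-j$ acting on the Pochhammer symbols at $q=\zeta$ that would pair the terms of $\sum_{j=0}^{e-1}c_\zeta(j)$ into cancelling couples. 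Establishing either would settle the conjecture in full; lacking one, only the one-parameter specialization $b=1$, that is the middle congruence, appears provable by the present method.
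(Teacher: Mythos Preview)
The statement you are attempting is labelled \textbf{Conjecture} in the paper, and the paper offers no proof. Immediately after stating it, the authors remark only that the case $d=1$ is trivial (since $\sum_{k=0}^{n-1}[2k+1]q^{-k}=[n]^2q^{1-n}$) and that the special case $d=2$, $b=1$ is covered by Theorem~\ref{th:4.2}. Everything else is left open. Your write-up is therefore not to be compared with a paper proof but assessed on its own merits as a research sketch, and as such it is honest about the two-parameter congruences: you correctly isolate the obstruction (the missing central pair $(q;q^d)_k/(q^d;q^d)_k$ blocks the ${}_6\phi_5$ summation, and $n\equiv-1\pmod d$ prevents the $a=q^{\pm n}$ trick from terminating the sum) and you do not overclaim there.

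The genuine gap is in your treatment of the middle congruence. Your approach to divisibility by $[n]$ via the ${}_6\phi_5$ evaluation~\eqref{Eq:6phi5} is the right template, but two steps are not justified. First, the right-hand side of the resulting identity is
\[
\frac{(q^{d+1};q^d)_\infty(q^{d-1};q^d)_\infty(aq^{d-1};q^d)_\infty(q^{d-1}/a;q^d)_\infty}
{(aq^d;q^d)_\infty(q^d/a;q^d)_\infty(q^d;q^d)_\infty(q^{d-2};q^d)_\infty},
\]
and at a primitive $e$-th root of unity with $e\mid n$ the factors $(q^d;q^d)_\infty$ and $(q^{d-2};q^d)_\infty$ in the denominator vanish as well, so the limit is not ``clearly $0$'' from $(q^{d-1};q^d)_\infty$ alone; one needs the boundedness-of-partial-products argument as in the proofs of Theorems~\ref{th:baby2} and~\ref{thm:8k+1-a}, not just the presence of a single vanishing numerator factor. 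Second, and more seriously, your upgrade from $[n]$ to $[n]\Phi_n(q)$ is not an argument: the observation that $(q;q^d)_k^2$ contributes a double zero to each \emph{term} past some index says nothing about the order of vanishing of the \emph{sum} $\sum_{k=0}^{n-1}c_q(k)$ at a primitive $n$-th root, since the early terms $k=0,1,\dots$ carry no such zero. In the paper the promotion to an extra $\Phi_n(q)$ (or $\Phi_n(q)^2$) is never obtained by term-wise order counting; it comes from proving the congruence separately modulo $(1-aq^n)(a-q^n)$ and then specialising $a\to1$. You yourself note that this route is blocked here because $d\nmid(n-1)$ prevents $a=q^{\pm n}$ from truncating the sum. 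So the extra factor $\Phi_n(q)$ in the middle congruence remains unproved by your sketch, and the conjecture stays open.
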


Because of
$$
\sum_{k=0}^{n-1}[2k+1]q^{-k}=[n]^2 q^{1-n},
$$
Conjecture \ref{th:4.2+} is trivially true for $d=1$.
The special case $d=2$ and $b=1$ of the conjecture is seen to be covered by Theorem~\ref{th:4.2}.

\begin{conjecture}
\label{conj-He-7}
Let $d$ and $n$ be positive integers with $d\ge 3$ and $n\equiv -1\pmod{d}$. Then
$$
\sum_{k=0}^{n-1}\frac{(a_1 q;q^d)_k (a_2q;q^d)_k \cdots (a_dq;q^d)_k q^{dk}}{(a_1q^d;q^d)_k (a_2 q^d;q^d)_k \dotsb(a_dq^d;q^d)_k}
\equiv 0 \pmod{\Phi_n(q)}
$$
and
$$
\sum_{k=0}^{n-1}\frac{(q;q^d)_k^d q^{dk}}{(q^d;q^d)_k^d}
\equiv 0 \pmod{\Phi_n(q)^2}.
$$
\end{conjecture}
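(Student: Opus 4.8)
The plan is to attack both congruences by the radial-asymptotics method of Sections~\ref{s2} and~\ref{s3}, upgrading to the squared modulus by creative microscoping as in Theorem~\ref{thm:8k+1-a}. Write $Q=q^d$ and denote by
$$
c_q(k)=\frac{(a_1q;q^d)_k\dotsb(a_dq;q^d)_k}{(a_1q^d;q^d)_k\dotsb(a_dq^d;q^d)_k}\,q^{dk}
$$
the $k$-th summand of the first congruence. Since $\gcd(n,d)=1$ whenever $n\equiv-1\pmod d$, a primitive $n$-th root of unity $\zeta$ is coprime to $d$, and it suffices to prove that the truncation $\sum_{k=0}^{n-1}c_q(k)$ vanishes at each such $\zeta$. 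The structural fact driving everything is that $c_\zeta(k)$ is \emph{periodic} of period~$n$: because $\zeta^n=1$, a product of any $d$-Pochhammer symbol over one full period collapses by $\prod_{j=0}^{n-1}(1-x\zeta^{dj})=1-x^n$, so that $c_\zeta(k+n)=c_\zeta(k)$ and in particular $c_\zeta(\ell n)=c_\zeta(0)=1$ for all~$\ell$.

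First I would settle the first congruence modulo $\Phi_n(q)$. For $|q|<1$ the ratio $c_q(k+1)/c_q(k)\to q^d$, so $R(q):=\sum_{k=0}^\infty c_q(k)$ is analytic in the disc; applying Lemma~\ref{prop:root} as in the proofs of Theorems~\ref{th:baby2} and~\ref{thm:8k+1-a} gives the decomposition
$$
R(q)=\sum_{\ell=0}^\infty c_q(\ell n)\sum_{k=0}^{n-1}\frac{c_q(\ell n+k)}{c_q(\ell n)},
$$
whose inner sum tends to $\Sigma:=\sum_{k=0}^{n-1}c_\zeta(k)$ and whose outer factor $\sum_\ell c_\zeta(\ell n)=\sum_\ell 1$ diverges as $q\to\zeta$ radially. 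Consequently, if the radial limit $\lim_{q\to\zeta}R(q)$ is \emph{finite}, then $\Sigma=0$ and hence $\Phi_n(q)$ divides $\sum_{k=0}^{n-1}c_q(k)$, exactly as in Theorem~\ref{th:baby2}. The hypothesis $d\ge3$ is what forces this limit to be finite: it is the $q$-analogue of the convergence of ${}_dF_{d-1}$ at unit argument, and the excluded value $d=2$ genuinely behaves differently (cf.\ Conjecture~\ref{th:4.2+}). To make the boundedness rigorous I would produce a basic-hypergeometric transformation, of the Rahman type used in~\eqref{eq:Rah93}, expressing $R(q)$ as a product times a series that converges at $q=\zeta$; the product is regular there because its zeros and poles involve the generic $a_i$ and are coprime to $\Phi_n(q)$. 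Identifying and proving this transformation for the fully general $d$-parameter sum is, I expect, the main obstacle: for $d\ge3$ the series is neither well-poised nor balanced, so it is summed by none of the textbook formulae~\eqref{eq:6phi5}, \eqref{quadratic}, and a Bailey-chain argument or an inductive reduction from $d$ to $d-1$ will be needed.

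Granting the first congruence modulo $\Phi_n(q)$, I would obtain the second (symmetric, modulo $\Phi_n(q)^2$) by creative microscoping. The idea is to single out one parameter as a variable $a$, say $a_1=a$, $a_2=a^{-1}$, and $a_3=\dots=a_d=1$, so that the specialised truncation $S(a)$ reduces to the symmetric sum as $a\to1$; if one can show that $(1-aq^n)(a-q^n)$ divides $S(a)$ as a rational function of $a$, then $S(1)=(1-q^n)^2\,T(1,q)$ with $T$ regular at $a=1$, and this is divisible by $\Phi_n(q)^2$ because $\Phi_n(q)$ divides $1-q^n$. Divisibility by $(1-aq^n)(a-q^n)$ should follow, as in Theorems~\ref{thm:8k+1-a} and~\ref{conj:one}, from a \emph{terminating} summation of the specialised series at $a=q^{\pm n}$.

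Here, however, lies a genuine difficulty that I believe explains the conjectural status. In all the theorems of Sections~\ref{s3} and~\ref{s4} the base is $q^2$ with $n$ odd, so $a=q^{-n}$ creates a factor $(q^{1-n};q^2)_k$ that terminates the sum at $k=(n-1)/2$. In the present setting the base is $q^d$ while $n\equiv-1\pmod d$, and the factor $(aq;q^d)_k$ specialised at $a=q^{-n}$ becomes $(q^{1-n};q^d)_k$, which does \emph{not} terminate, since $1-n\equiv2\not\equiv0\pmod d$ for $d\ge3$. Thus the naive one-parameter lift above fails to put the congruence in terminating form, and the crux is to find the \emph{correct} deformation: one should introduce $a$ through a shifted factor such as $(aq^{d-1};q^d)_k$, whose specialisation $(q^{d-1-n};q^d)_k$ terminates at $k=(n+1)/d$, and accompany it by a matching factor so that the terminating series is evaluable by Jackson's ${}_8W_7$ summation behind~\eqref{eq:6phi5} or by the $q$-Dixon sum. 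Constructing this deformation so that it simultaneously collapses to the symmetric sum as $a\to1$ and sums in closed form at $a=q^{\pm n}$ is, to my mind, the principal unresolved step; once it is in place, the passage $a\to1$ supplies the extra factor of $\Phi_n(q)$ and finishes the proof.
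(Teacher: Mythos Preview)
The statement you are addressing is recorded in the paper as a \emph{conjecture}; the authors give no proof, so there is nothing on their side to compare with. Your write-up is candid about this and is really a programme rather than a proof: you correctly isolate the two ingredients the paper's method would require, and you correctly identify that neither is currently available.

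For the first congruence, your periodicity computation $c_\zeta(\ell n)=1$ is right, and the divergence of the outer sum is immediate. The gap is exactly where you place it: you need $R(q)=\sum_{k\ge0}c_q(k)$ to stay bounded as $q\to\zeta$ radially, and every instance in the paper where this is achieved (Theorems~\ref{th:baby1}, \ref{th:baby2}, \ref{thm:8k+1-a}, \ref{thm:q-Hamme-J2}, \ref{conj:one}) relies on an \emph{explicit} closed-form evaluation of the infinite sum, whose product side one can inspect term by term at the root of unity. The generic ${}_d\phi_{d-1}$ here is neither balanced nor well-poised, so no such evaluation or transformation is known; your appeal to ``the $q$-analogue of convergence of ${}_dF_{d-1}$ at $1$'' is only heuristic and does not by itself control the radial limit. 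Until that boundedness is established rigorously, the $\Phi_n(q)$ divisibility remains open.

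For the $\Phi_n(q)^2$ part, you have put your finger on the genuine obstruction: with base $q^d$ and $n\equiv-1\pmod d$, the natural specialisation $a=q^{-n}$ in a factor $(aq;q^d)_k$ gives $(q^{1-n};q^d)_k$, which does not terminate. Your suggested cure, to deform through a factor like $(aq^{d-1};q^d)_k$ so that $a=q^{-n}$ forces termination at $k=(n+1)/d-1$, is sensible, but you would then need a \emph{summable} terminating series at that specialisation (playing the role Lemma~\ref{lem-new} plays for Theorem~\ref{thm:8k+1-a}), and none of Jackson's ${}_8W_7$, the $q$-Dixon sum, or the quadratic/cubic summations in the paper apply to a series with $d\ge3$ free parameters. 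So, as you say, finding the right parametric lift together with its closed-form terminating evaluation is the crux, and it is precisely this missing identity that keeps the statement conjectural.
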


The congruences in Conjecture \ref{conj-He-7} do not hold in general when $d=2$. The conjecture comes with the following companion.

\begin{conjecture}
\label{conj-He-17}
Let $d$ and $n$ be positive integers with $d\ge 2$ and $n\equiv 1\pmod{d}$. Then
\begin{equation}
\sum_{k=0}^{n-1}\frac{(a_1/q;q^d)_k (a_2/q;q^d)_k \cdots (a_d/q;q^d)_k q^{dk}}{(a_1q^d;q^d)_k (a_2 q^d;q^d)_k \dotsb(a_dq^d;q^d)_k}
\equiv 0 \pmod{\Phi_n(q)}
\label{eq:many-a/q}
\end{equation}
and
$$
\sum_{k=0}^{n-1}\frac{(q^{-1};q^d)_k^d q^{dk}}{(q^d;q^d)_k^d}
\equiv 0 \pmod{\Phi_n(q)^2}.
$$
If $d=2$, then the congruence \eqref{eq:many-a/q} further holds modulo $[n]$.
\end{conjecture}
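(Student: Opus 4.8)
The plan is to treat $a_1,\dots,a_d$ as free parameters of a creative-microscoping argument in the spirit of the proof of Theorem~\ref{thm:8k+1-a}, while keeping in mind that the two asserted moduli have genuinely different origins: the modulus $\Phi_n(q)$ for generic $a_i$ comes from a single primitive $n$-th root of unity, whereas the modulus $\Phi_n(q)^2$ in the all-ones case should come from the collision of two terminating specializations, exactly as in the passage from \eqref{eq:qab-1} to \eqref{eq:qab-2} in the proof of Theorem~\ref{eq:last-thm}. Write $c_q(k)$ for the $k$-th summand and record the basic structural fact that $n\equiv1\pmod d$ forces $\gcd(d,n)=1$, so that for a primitive $n$-th root of unity $\zeta$ the base $\zeta^d$ is again a primitive $n$-th root of unity; this makes Lemma~\ref{prop:root} and reduction formulas of the type \eqref{eq:zeta-sim} available throughout.

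First I would attack the congruence modulo $\Phi_n(q)$, that is, $\sum_{k=0}^{n-1}c_\zeta(k)=0$. The cleanest route is the one used in Theorems~\ref{th:baby1} and~\ref{th:baby2}: recognize $\sum_{k\ge0}c_q(k)$ as a summable basic hypergeometric series, verify that its closed form vanishes as $q\to\zeta$ (because some numerator factor hits a zero there), and then convert this into divisibility of the truncation by $\Phi_n(q)$ via the full-period argument, with the right-hand side controlled uniformly exactly as the bounded companion is controlled in the proof of Theorem~\ref{th:baby2}. I would also test the reflection that reverses the order of summation: the substitution $k\mapsto n-1-k$ combined with $a_i\mapsto\zeta^{d+1}/a_i$ does map the individual terms into one another up to explicit monomials in $\zeta$. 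The difficulty is that this reflection mixes the $a_i$-family with the reflected family and introduces a nontrivial $k$-dependent weight $\zeta^{d(d-1)k}$, so it does not by itself force the sum to cancel; it only confirms the expected balanced symmetry of the series.

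To reach $\Phi_n(q)^2$ in the all-ones case, I would introduce just two microscoping parameters, say $a_1$ and $a_2$, set them to $q^{-n}$ one at a time, and hope that the truncated sum then matches a closed form that vanishes, thereby producing divisibility by $(1-a_1q^n)$ and $(1-a_2q^n)$; letting $a_1,a_2\to1$ replaces each factor by $1-q^n$ and hence supplies $\Phi_n(q)^2$ after combining, by coprimality, with the generic divisibility by $\Phi_n(q)$. For $d=2$ this program actually runs, because the relevant series is precisely of Andrews' ${}_2\phi_1(-1)$ type \eqref{eq:andrews}: the specialization $a_i=q^{-n}$ terminates the sum (here $d\mid n+1$ since $n$ is odd) and the companion summation is available at every base $\zeta^e$ with $e\mid n$, which simultaneously explains the strengthening of \eqref{eq:many-a/q} to modulus $[n]$ and recovers Theorem~\ref{eq:last-thm} as the case $n\equiv3\pmod4$.

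The main obstacle is the summation itself for $d\ge3$. The generic series $\sum_k\prod_{i=1}^d\frac{(a_i/q;q^d)_k}{(a_iq^d;q^d)_k}q^{dk}$ is balanced, carrying $d$ numerator and $d$ denominator parameters, and matches no classical evaluation or transformation; only when one $a_i=1$ does a lower parameter collapse to $q^d$ and reduce it to a genuine ${}_d\phi_{d-1}$. Moreover, for $d\ge3$ the specialization $a_1=q^{-n}$ no longer terminates the truncation, since $n+1\equiv2\pmod d$ is not divisible by $d$, so the device that produced the $(1-a_iq^n)$ factors for $d=2$ is unavailable. Consequently both the root-of-unity evaluation and the terminating-specialization step lack an explicit closed form to exploit, and locating (or proving) the missing balanced $q$-hypergeometric identity — or an effective bounded-companion estimate uniform in the truncation — is exactly what separates the settled $d=2$ case from the general conjecture.
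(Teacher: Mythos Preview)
The statement you are attempting is \emph{Conjecture}~\ref{conj-He-17}; the paper offers no proof, only records it among the open problems of Section~\ref{s5}. There is therefore nothing to compare your proposal against, and indeed by your final paragraph you correctly concede that you do not have a proof either: for $d\ge3$ you identify precisely the missing ingredient (a balanced summation or transformation that would force the truncation to vanish at a primitive $n$-th root of unity), and that gap is real.

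Your treatment of the $d=2$ case is, however, not correct as written. The series in \eqref{eq:many-a/q} for $d=2$ has denominator $(a_1q^2;q^2)_k(a_2q^2;q^2)_k$, whereas Andrews' identity \eqref{eq:andrews} has denominator $(q^2;q^2)_k(abq^4;q^4)_k$; these do not match, so the series here is \emph{not} ``precisely of Andrews' ${}_2\phi_1(-1)$ type''. Likewise, Theorem~\ref{eq:last-thm} concerns numerators $(aq;q^2)_k(bq;q^2)_k$ and the condition $n\equiv3\pmod4$, not the numerators $(a_1/q;q^2)_k(a_2/q;q^2)_k$ and the condition $n$ odd that arise here, so the present $d=2$ conjecture does not specialize to Theorem~\ref{eq:last-thm}. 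Your terminating specialization $a_i=q^{-n}$ does truncate the sum at $k=(n+1)/2$, but without a closed form for that truncation you cannot conclude it vanishes, and the ``companion summation at every base $\zeta^e$'' you invoke is simply not available for this series. In short, the $d=2$ case is also open, and the proposal is an outline of a strategy together with an honest diagnosis of why it does not yet go through---which is an accurate reflection of the status of Conjecture~\ref{conj-He-17}.
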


Another related entry of Conjecture \ref{conj-He-7} for $d=4$ is as follows.

\begin{conjecture}
\label{conj-He-7-d=4}
Let $n\equiv 3\pmod{4}$ be a positive integer. Then
\begin{align*}
\sum_{k=0}^{n-1} \frac{(aq;q^4)_k (q/a;q^4)_k (q^2;q^4)_kq^{4k}}{(aq^4;q^4)_k (q^4/a;q^4)_k (q^4;q^4)_k}
&\equiv 0\pmod{\Phi_n(q)},
\\
\sum_{k=0}^{n-1} \frac{(q;q^4)_k^2 (q^2;q^4)_k q^{4k}}{(q^4;q^4)_k^3}
&\equiv 0\pmod{\Phi_n(q)^2}.
\end{align*}
\end{conjecture}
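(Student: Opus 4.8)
The plan is to attack Conjecture~\ref{conj-He-7-d=4} by creative microscoping, exactly as in the proof of Theorem~\ref{thm:8k+1-a} and its descendants in Section~\ref{s4}. Introduce the one-parameter truncation
\[
\Sigma_m(a)=\sum_{k=0}^{m}\frac{(aq;q^4)_k (q/a;q^4)_k (q^2;q^4)_k\,q^{4k}}{(aq^4;q^4)_k (q^4/a;q^4)_k (q^4;q^4)_k},
\]
with $m=n-1$. The two displayed congruences then correspond, respectively, to reducing $\Sigma_{n-1}(a)$ modulo $\Phi_n(q)$ for an indeterminate $a$, and to the specialization $a\to1$. The decisive observation is that the second (supercongruence, modulo $\Phi_n(q)^2$) would follow for free once one knows
\[
\Sigma_{n-1}(a)\equiv0\pmod{(1-aq^n)(a-q^n)},
\]
since at $a=1$ the polynomial $(1-aq^n)(a-q^n)$ becomes $(1-q^n)^2$, which is divisible by $\Phi_n(q)^2$; letting $a\to1$ and using that $(q;q^4)_k^2$ replaces $(aq;q^4)_k(q/a;q^4)_k$ then yields precisely the second congruence.

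Accordingly I would carry out two essentially independent steps. First, to get the factor $(1-aq^n)(a-q^n)$, set $a=q^{-n}$ (and, by the $a\leftrightarrow1/a$ symmetry of the summand, $a=q^{n}$) and try to evaluate the resulting sum in closed form, aiming to show it vanishes identically in $q$. This is where one needs a terminating basic-hypergeometric identity; the natural candidates are the terminating very-well-poised ${}_6\phi_5$ sum~\eqref{eq:6phi5}, the $q$-Saalsch\"utz theorem, or Rahman's quadratic transformation \cite[eq.~(4.6)]{Ra93} already used in Section~\ref{s4.2}. Second, for the congruence modulo $\Phi_n(q)$ at generic $a$, fix a primitive $n$-th root of unity $\zeta$ and argue as in Theorems~\ref{th:baby1} and~\ref{th:baby2}: for $n\equiv3\pmod4$ the factor $(q^2;q^4)_k$ vanishes at $q=\zeta$ once $k$ exceeds the index $j$ determined by $4j\equiv-2\pmod n$, so the truncation collapses to a short sum, and one hopes to close the argument via Lemma~\ref{prop:root} together with the closed form from the first step.

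The hard part will be the first step. For the $d=2$ companion (Theorem~\ref{eq:last-thm}) the analogous specialization $a=q^{-n}$ makes the summand carry the factor $(q^{1-n};q^2)_k$, which terminates the series at $k=(n-1)/2$ because $(n-1)/2\in\mathbb Z$; the right-hand product of Andrews' sum~\eqref{eq:andrews} then vanishes for $n\equiv3\pmod4$. Here, however, the base is $q^4$ and $(q^{1-n};q^4)_k$ does \emph{not} terminate, since $(n-1)/4\notin\mathbb Z$ when $n\equiv3\pmod4$; likewise none of the numerator factors forces termination. Thus the straightforward ``terminate-and-vanish'' mechanism of Sections~\ref{s4.6}--\ref{s4.7} is unavailable, and the real obstacle is to locate (or derive, presumably through a quadratic transformation turning the $q^2$-entry over base $q^4$ into a summable very-well-poised series) the closed-form evaluation of this particular ${}_3\phi_2$. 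Failing that, one would have to perform the ``$q$-microscope'' directly at second order, controlling the double zero produced by $(q;q^4)_k^2$ at $q=\zeta$ and showing that the residual short sum vanishes modulo $\Phi_n(q)^2$; this second-order radial-asymptotics bookkeeping is exactly the step that the methods of the present paper do not yet automate.
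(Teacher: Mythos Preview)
The statement you are attempting to prove is recorded in the paper as a \emph{conjecture} (Conjecture~\ref{conj-He-7-d=4}), placed in Section~\ref{s5} among the open problems; the paper offers no proof for it. There is therefore no argument in the paper to compare your proposal against.

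Your proposal is not a proof but an honest sketch of why the creative-microscoping template fails here, and on that diagnostic point you are correct. The mechanism that drives Theorems~\ref{thm:8k+1-a}, \ref{thm:q-Hamme-J2}, \ref{thm:q-Hamme-L2}, \ref{eq:last-thm} is precisely that specializing $a=q^{\pm n}$ turns one of the numerator Pochhammer symbols into a terminating factor of the right length, after which a known summation or transformation evaluates the finite sum in closed form. As you observe, with base $q^4$ and $n\equiv3\pmod4$ none of $(q^{1-n};q^4)_k$, $(q^{1+n};q^4)_k$, $(q^2;q^4)_k$ terminates at an index $\le n-1$, so there is no natural terminating ${}_3\phi_2$ to hand to a summation formula. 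That is exactly the obstruction, and it is why the authors leave this as a conjecture rather than a theorem.

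Two small remarks on your write-up. First, even if you could establish $\Sigma_{n-1}(a)\equiv0\pmod{(1-aq^n)(a-q^n)}$, this alone yields the second congruence (modulo $\Phi_n(q)^2$) upon $a\to1$, but it does \emph{not} yield the first congruence (modulo $\Phi_n(q)$ for generic~$a$); those are genuinely independent claims, and your ``second step'' via radial limits at primitive $n$-th roots of unity is not optional. Second, in that second step you write that $(q^2;q^4)_k$ vanishes at $q=\zeta$ once $k$ exceeds a certain index $j$; but beware that the denominator entry $(q^4;q^4)_k$ also vanishes at the same rate past $k=(n-1)/4$-th multiples, so the quotient need not vanish, and the ``collapse to a short sum'' does not occur in the way it does for the $(q;q^2)_k$ factor in Theorems~\ref{th:baby1}--\ref{th:baby2}. (Concretely: for $0\le k\le n-1$, $(q^4;q^4)_k$ is coprime to $\Phi_n(q)$ since $\gcd(n,4)=1$, but then $(q^2;q^4)_k$ first vanishes at $k=(n+1)/4$ when $n\equiv3\pmod4$, which is well below $n-1$; so the surviving sum runs up to $(n-3)/4$, not $(n-1)/2$, and you would still need to show this shorter sum vanishes at $q=\zeta$.) Neither of these points repairs the main gap; the conjecture remains open.
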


The first author and Zeng \cite[Corollary 1.2]{GZ} give a $q$-analogue of the (H.2) supercongruence of Van Hamme \cite{Hamme}. In particular, they prove that
\begin{align*}
\sum_{k=0}^{(p-1)/2}\frac{(q;q^2)_k^2 (q^2;q^4)_k q^{2k}}{(q^2;q^2)_k^2 (q^4;q^4)_k}\equiv 0\pmod{[p]^2}\quad \text{for any prime}\ p\equiv 3\pmod{4}.
\end{align*}
We now provide a related $a$-parametric version of the congruence.

\begin{conjecture}
Let $n\equiv3\pmod 4$ be a positive integer. Then
\begin{align*}
\sum_{k=0}^{(n-1)/2}\frac{(aq;q^2)_k (q/a;q^2)_k  (q^2;q^4)_k q^{2k}}{(aq^2;q^2)_k(q^2/a;q^2)_k (q^4;q^4)_k}\equiv 0\pmod{\Phi_n(q)}.
\end{align*}
\end{conjecture}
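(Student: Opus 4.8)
The plan is to run the `$q$-microscope' argument of Theorems~\ref{th:baby2} and~\ref{thm:strange}, but only at the \emph{primitive} $n$-th roots of unity, so as to land on divisibility by $\Phi_n(q)$ rather than by the full $[n]$. The first step is to locate a basic hypergeometric summation whose summand is exactly the one in the conjecture. Setting $a=q^{2s}$ and letting $q\to1$ turns the left-hand side into the well-poised series
\[
{}_3F_2\!\left[\begin{matrix}\tfrac12+s,\ \tfrac12-s,\ \tfrac12\\ 1+s,\ 1-s\end{matrix};1\right],
\]
which is summable by Whipple's theorem; I therefore expect the relevant $q$-identity to be a $q$-analogue of Whipple's ${}_3F_2(1)$ evaluation, extractable from one of Rahman's quadratic summations \cite{Ra93} (see also \cite{GR04}) and already implicit in the $a\to1$ result of the first author and Zeng \cite{GZ}. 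Writing it as $\sum_{k=0}^\infty c_q(k)=R(a,q)$, where $c_q(k)$ is the conjecture's summand and $R(a,q)$ is a ratio of infinite $q$-products, is the entry point.

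Next I would fix a primitive $n$-th root of unity $\zeta$, with $n\equiv3\pmod4$, and pass to the radial limit $q\to\zeta$. A direct count shows that the factor $(q^2;q^4)_k$ forces $c_\zeta(k)=0$ for $(n+1)/2\le k\le n-1$: the first vanishing occurs at $k=(n+1)/2$ because $4\cdot\frac{n-1}2\equiv-2\pmod n$, while $(q^4;q^4)_k$ stays nonzero up to $k=n-1$ and $a$ is generic. Hence the inner block collapses to the truncation of the conjecture, $\sum_{k=0}^{n-1}c_\zeta(k)=\sum_{k=0}^{(n-1)/2}c_\zeta(k)$. Grouping as $\sum_{\ell\ge0}c_q(\ell n)\sum_{k=0}^{n-1}c_q(\ell n+k)/c_q(\ell n)$ and applying the $q$-Lucas theorem (Lemma~\ref{prop:root}) together with the reductions~\eqref{eq:zeta-sim}, I expect $c_\zeta(\ell n)=\binom{2\ell}{\ell}/4^\ell$; here the $a$-dependence cancels because $\prod_{j=0}^{n-1}(1-a\zeta^{1+2j})=1-a^n$ collapses the well-poised pair $(aq;q^2)/(aq^2;q^2)$ and its companion. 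Consequently the outer series $\sum_{\ell\ge0}\binom{2\ell}{\ell}4^{-\ell}$ diverges.

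The third step is the boundedness argument of Theorem~\ref{th:baby2}: the limiting identity becomes $\bigl(\sum_{\ell\ge0}\binom{2\ell}{\ell}4^{-\ell}\bigr)\sum_{k=0}^{(n-1)/2}c_\zeta(k)=R(a,\zeta)$, and since the outer factor is $+\infty$ while $R(a,\zeta)$ is finite, the inner truncated sum must vanish, $\sum_{k=0}^{(n-1)/2}c_\zeta(k)=0$. As this holds for every primitive $n$-th root of unity and the truncated sum is a rational function of $q$ whose denominator $(aq^2;q^2)_{(n-1)/2}(q^2/a;q^2)_{(n-1)/2}(q^4;q^4)_{(n-1)/2}$ is coprime to $\Phi_n(q)$ (because $n$ is odd and $a$ is an indeterminate), I conclude that $\Phi_n(q)$ divides the numerator, which is exactly the asserted congruence. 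Note that this route yields only $\Phi_n(q)$, not $[n]$, which is appropriate since the congruence is expected to fail at divisors $d\mid n$ with $d\equiv1\pmod4$.

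The main obstacle is the input to the third step: proving that $R(a,\zeta)$ is finite, i.e.\ that the $q$-product $R(a,q)$ has no pole as $q\to\zeta$ radially. This is precisely where the hypothesis $n\equiv3\pmod4$ enters, in the spirit of the `does not interfere' checks in the proofs of Theorems~\ref{thm:strange} and~\ref{eq:last-thm}: one must verify that $\Phi_n(q)$ occurs in the numerator of $R$ at least as often as in its denominator (and, ideally, strictly more often, which would give $R(a,\zeta)=0$ and an even cleaner argument). Thus the crux is to pin down the exact shape of the quadratic/$q$-Whipple right-hand side and to carry out this pole count modulo~$4$; the remaining bookkeeping parallels Section~\ref{s3} and is routine.
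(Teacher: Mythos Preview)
The statement you are attempting to prove is listed in the paper as a \emph{Conjecture}, not a theorem; the authors give no proof of it. So there is nothing in the paper to compare your proposal against.

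Your plan is the natural one and follows the paper's template faithfully: locate a closed-form product $R(a,q)$ for the infinite sum, analyse the radial limit $q\to\zeta$ for a primitive $n$-th root of unity, and use the divergence of $\sum_\ell\binom{2\ell}{\ell}4^{-\ell}$ against the boundedness of $R(a,\zeta)$ to force the truncated sum to vanish. The vanishing argument for $(n+1)/2\le k\le n-1$, the $q$-Lucas reduction of $c_\zeta(\ell n)$, and the coprimality of the denominator with $\Phi_n(q)$ are all correct and routine in this framework.

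The genuine gap is exactly the one you flag: the existence and explicit shape of the summation $\sum_{k\ge0}c_q(k)=R(a,q)$. This series is \emph{not} very-well-poised (there is no $[2dk+r]$ factor), and it mixes bases $q^2$ and $q^4$, so it does not fall under \eqref{Eq:6phi5}, \eqref{quadratic}, or the $q$-Dixon/Andrews identities used elsewhere in the paper. The Guo--Zeng proof of the $a=1$ case \cite{GZ} does not proceed via such a product formula either. Your appeal to a ``$q$-Whipple'' evaluation is only a hope; unless you can exhibit the identity and verify that $R(a,q)$ has no pole at primitive $n$-th roots of unity when $n\equiv3\pmod4$, the argument does not close. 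Since the authors, who invented this method, left the statement as a conjecture, it is reasonable to suspect that no such off-the-shelf identity is available, and that a different idea is required.
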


More generally, motivated by \cite[Theorem 1.3]{GZ}, we believe that the following is true.

\begin{conjecture}
Let $d$, $n$ and $r$ be positive integers with $\gcd(d,n)=1$ and $n$ odd. If the least non-negative residue of $-r/d$ modulo $n$ is odd, then
\begin{align*}
\sum_{k=0}^{(n-1)/2}\frac{(aq^r;q^d)_k (q^{d-r}/a;q^d)_k  (q^d;q^{2d})_k q^{dk}}{(aq^d;q^d)_k(q^d/a;q^d)_k (q^{2d};q^{2d})_k}\equiv 0\pmod{\Phi_n(q)}.
\end{align*}
\end{conjecture}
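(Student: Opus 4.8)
The plan is to realize the truncated sum in the conjecture as a truncation of an explicitly summable basic hypergeometric series and then to pass to roots of unity, exactly in the spirit of the proofs of Theorems~\ref{th:baby2} and~\ref{thm:8k+1-a}. Denote by $c_q(k)$ the $k$-th summand. Working in base $q^d$, this summand has the characteristic shape of Andrews' $q$-analogue of Gauss' $_2F_1(-1)$ sum (the same family of identities underlying the Guo--Zeng $q$-supercongruence for Van Hamme's (H.2), of which the conjecture is the $(d,r,a)$-parametric generalization). So I would first exhibit the non-terminating summation
$$
\sum_{k=0}^\infty c_q(k)
=\sum_{k=0}^\infty\frac{(aq^r;q^d)_k (q^{d-r}/a;q^d)_k (q^d;q^{2d})_k q^{dk}}{(aq^d;q^d)_k(q^d/a;q^d)_k (q^{2d};q^{2d})_k}
=\text{(ratio of infinite $q$-products)},
$$
writing the right-hand side as a quotient of infinite products in base $q^d$ and $q^{2d}$.

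The structural key is the factor $(q^d;q^{2d})_k$. For a primitive $n$-th root of unity $\zeta$ (and these are all we need, since the modulus is the single cyclotomic polynomial $\Phi_n(q)$), one has $\gcd(d,n)=1$, so $(q^d;q^{2d})_k\big|_{q=\zeta}=\prod_{j=0}^{k-1}(1-\zeta^{d(2j+1)})$ first vanishes at $j=(n-1)/2$; hence $c_\zeta(k)=0$ for $(n-1)/2<k\le n-1$, while all denominators stay nonzero there for generic $a$. Thus the inner block collapses, $\sum_{k=0}^{n-1}c_\zeta(k)=\sum_{k=0}^{(n-1)/2}c_\zeta(k)$, and it is exactly the truncated sum of the conjecture that survives. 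Next I would run the block decomposition $\sum_\ell c_q(\ell n)\sum_{k=0}^{n-1}c_q(\ell n+k)/c_q(\ell n)$ and take $q\to\zeta$ radially: using $\prod_{s=0}^{n-1}(1-x\zeta^s)=1-x^n$, the four $a$-dependent Pochhammer symbols in $c_\zeta(\ell n)$ reduce to $(1-a^n)^\ell(1-a^{-n})^\ell$ in both numerator and denominator and cancel, while the $q$-Lucas theorem (Lemma~\ref{prop:root}) together with the reductions \eqref{eq:zeta-sim}, applied after $(q^d;q^{2d})_m/(q^{2d};q^{2d})_m={\qbin{2m}{m}}_{q^d}/(-q^d;q^d)_m^2$, gives $c_\zeta(\ell n)=\binom{2\ell}{\ell}/4^\ell$. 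Since $\sum_{\ell}\binom{2\ell}{\ell}/4^\ell$ diverges, the same boundedness argument as in Theorem~\ref{th:baby2} forces $\sum_{k=0}^{(n-1)/2}c_\zeta(k)=0$, provided the right-hand side of the summation stays bounded as $q\to\zeta$.

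This boundedness is where the parity hypothesis enters, and it is the step I expect to be the main obstacle. The right-hand side is a quotient of infinite products, and as $q\to\zeta$ radially each factor $1-q^m$ with $n\mid m$ tends to $0$; so I must check that the numerator and denominator products acquire matching multiplicities at $\zeta$, with no net pole. A careful count should show that the condition that the least non-negative residue $\langle -r/d\rangle_n$ be \emph{odd} is precisely what aligns the zeros of the numerator against those of the denominator, since it governs the position modulo $n$ of the first vanishing factor in the base-$q^{2d}$ products, just as $n\equiv3\pmod4$ does in the $d=2$, $r=1$ case, where $\langle-1/2\rangle_n=(n-1)/2$. Verifying this balance for general $(d,r)$, and confirming that the surviving limit is finite, is the delicate part; everything else then follows as above, after noting that the common denominator $(aq^d;q^d)_{(n-1)/2}(q^d/a;q^d)_{(n-1)/2}(q^{2d};q^{2d})_{(n-1)/2}$ is coprime to $\Phi_n(q)$ because $\gcd(d,n)=1$ and $(n-1)/2<n$, so that the vanishing of the sum at every primitive $n$-th root of unity yields divisibility by $\Phi_n(q)$.
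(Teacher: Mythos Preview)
The statement you are attempting to prove is labeled a \emph{Conjecture} in the paper; the authors do not supply a proof, so there is no paper proof to compare against. Your proposal is a reasonable outline of how one would \emph{try} to apply the paper's $q$-microscope method, but it contains a genuine gap that is almost certainly the reason the statement remains conjectural.

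The gap is your first step: you write ``I would first exhibit the non-terminating summation $\sum_{k=0}^\infty c_q(k)=\text{(ratio of infinite $q$-products)}$,'' but you never identify such an identity, and the candidate you name does not fit. Andrews' $q$-analogue of Gauss' ${}_2F_1(-1)$ sum (see~\eqref{eq:andrews}) has summand
\[
\frac{(aq;q^2)_k(bq;q^2)_k\,q^{k^2+k}}{(q^2;q^2)_k(abq^4;q^4)_k},
\]
with two Pochhammer symbols top and bottom and a \emph{quadratic} exponent $q^{k^2+k}$; the conjecture's summand has three Pochhammer symbols top and bottom and a linear exponent $q^{dk}$. In the paper's own treatment of the (H.2)-type congruences (Section~4.7 and the preceding conjecture for $d=2$, $r=1$), the authors already refrain from claiming a product evaluation of $\sum_k c_q(k)$ in the $a$-parametric form, which is why even the special case $d=2$ is left open. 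Without a concrete summation or transformation expressing the infinite series in product form, your boundedness/vanishing analysis at $q\to\zeta$ cannot get off the ground: you have nothing whose pole/zero structure you can count, so the parity hypothesis on $\langle -r/d\rangle_n$ has no object to act on.

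Everything \emph{after} that missing step is consistent with the paper's method (block decomposition, $q$-Lucas, the divergence of $\sum_\ell\binom{2\ell}{\ell}/4^\ell$, coprimality of the common denominator with $\Phi_n(q)$). But the first step is the whole difficulty; supplying it would resolve the conjecture.
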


There are other classes of (super)congruences, in which truncated hypergeometric sums are compared with coefficients of modular forms.
One notable example, again from Van Hamme's list \cite[(M.2)]{Hamme}, is the supercongruence
$$
\sum_{k=0}^{p-1}\frac{(\frac12)_k^4}{k!^4}
\equiv\sum_{k=0}^{(p-1)/2}\frac{(\frac12)_k^4}{k!^4}
\equiv\gamma_p\pmod{p^3}
$$
for primes $p>2$, where the right-hand side represents the $p$-th coefficient in the $q$-expansion
$q\,(q^2;q^2)_\infty^4(q^4;q^4)_\infty^4=\sum_{n=1}^\infty\gamma_nq^n$ (of a modular form).
The supercongruence was settled by T.~Kilbourn \cite{Kilbourn} using $p$-adic methods.
An obstacle to producing a suitable $q$-analogue is related to the coefficients $\gamma_n$ (which already originate from a $q$-expansion!).
However, the machinery of hypergeometric motives, in particular, a method due to B.~Dwork,
allows one to reduce the proof of the (M.2) supercongruence to verifying the congruences
\begin{equation}
S(p^{s+1}-1)\equiv S(p^s-1)S(p-1)\pmod{p^3}
\label{eq:simple-congruence}
\end{equation}
for $s=1$ and $2$ (see \cite[Section~2.1]{LTYZ17}), where $S(N)$ denotes the truncation of the hypergeometric sum
$$
\sum_{n=0}^\infty\frac{(\frac12)_n^4}{n!^4}
$$
at the $N$-th place. So far we could not figure out a $q$-analogue of the `simpler' supercongruence \eqref{eq:simple-congruence},
though we expect that the method in this note is adaptable to these settings as well.

\medskip
\noindent
\textbf{Acknowledgements.}
The second author thanks Ofir Gorodetsky for a related chat on $q$-congruences.
The authors thank Mohamed El Bachraoui and the anonymous referees for their critical comments that helped to improve the exposition of the article.


\begin{thebibliography}{99}

\bibitem{Andrews73}
\textsc{G. E. Andrews},
On the $q$-analog of Kummer's theorem and applications,
\emph{Duke Math. J.} \textbf{40} (1973), 525--528.

\bibitem{Andrews74}
\textsc{G. E. Andrews},
Applications of basic hypergeometric functions,
\emph{SIAM Rev.}  \textbf{16} (1974), 441--484.

\bibitem{De}
\textsc{J. D\'esarm\'enien},
Un analogue des congruences de Kummer pour les $q$-nombres d'Euler,
\emph{European J. Combin.} \textbf{3} (1982), 19--28.

\bibitem{FOR13}
\textsc{A. Folsom}, \textsc{K. Ono} and \textsc{R. C. Rhoades},
Mock theta functions and quantum modular forms,
\emph{Forum Math. Pi} \textbf{1} (2013), e2, 27~pp.

\bibitem{GR04}
\textsc{G.~Gasper} and \textsc{M.~Rahman},
\emph{Basic Hypergeometric Series},
2nd edition, Encyclopedia Math. Appl. \textbf{96}
(Cambridge Univ. Press, Cambridge, 2004).

\bibitem{Gu06}
\textsc{J. Guillera},
Generators of some Ramanujan formulas,
\emph{Ramanujan J.} \textbf{11} (2006), 41--48.

\bibitem{GuZu}
\textsc{J. Guillera} and \textsc{W. Zudilin},
``Divergent" Ramanujan-type supercongruences,
\emph{Proc. Amer. Math. Soc.} \textbf{140} (2012), 765--777.

\bibitem{Guo2018}
\textsc{V. J. W. Guo},
A $q$-analogue of a Ramanujan-type supercongruence involving central binomial coefficients,
\emph{J. Math. Anal. Appl.} \textbf{458} (2018), 590--600.

\bibitem{Guo2}
\textsc{V. J. W. Guo},
A $q$-analogue of the (L.2) supercongruence of Van Hamme, \emph{J. Math. Anal. Appl.} \textbf{466} (2018), 749--761.

\bibitem{Guo1}
\textsc{V. J. W. Guo},
A $q$-analogue of the (J.2) supercongruence of Van Hamme, \emph{J. Math. Anal. Appl.} \textbf{466} (2018), 776--788.

\bibitem{Guo3}
\textsc{V. J. W. Guo},
$q$-Analogues of the (E.2) and (F.2) supercongruences of Van Hamme,
\emph{Ramanujan J.} (to appear);
\href{https://doi.org/10.1007/s11139-018-0021-z}{\texttt{https://doi.org/10.1007/s11139-018-0021-z}}.

\bibitem{Guo4}
\textsc{V. J. W. Guo},
$q$-Analogues of two ``divergent" Ramanujan-type supercongruences,
\emph{Preprint} \href{http://arxiv.org/abs/1802.01260}{\texttt{arXiv:\,1802.01260 [math.NT]}} (February 2018), 18~pp.

\bibitem{Guo2019}
\textsc{V. J. W. Guo},
Proof of a $q$-congruence conjectured by Tauraso, \emph{Int. J. Number Theory.} (to appear);
\href{https://doi.org/10.1142/S1793042118501713}{\texttt{https://doi.org/10.1142/S1793042118501713}}.

\bibitem{GL18}
\textsc{V. J. W. Guo} and \textsc{J.-C. Liu},
$q$-Analogues of two Ramanujan-type formulas for $1/\pi$,
\emph{J. Difference Equ. Appl.} \textbf{24} (2018), 1368--1373.

\bibitem{GW}\textsc{V. J. W. Guo} and \textsc{S.-D. Wang}, Factors of sums and alternating sums of
products of $q$-binomial coefficients and powers of $q$-integers,
{\em Taiwanese J. Math.} \textbf{23} (2019), 11--27.

\bibitem{GuoWang}
\textsc{V. J. W. Guo} and \textsc{S.-D. Wang},
Some congruences involving fourth powers of central $q$-binomial coefficients,
\emph{Proc. Roy. Soc. Edinburgh Sect.~A} (to appear);
\href{https://doi.org/10.1017/S0308210518000963}{\texttt{https://doi.org/10.1017/S0308210518000963}}.

\bibitem{GZ2014}
\textsc{V. J. W. Guo} and \textsc{J. Zeng},
Some $q$-analogues of supercongruences of Rodriguez-Villegas,
\emph{J. Number Theory} \textbf{145} (2014), 301--316.

\bibitem{GZ}
\textsc{V. J. W. Guo} and \textsc{J. Zeng},
Some $q$-supercongruences for truncated basic hypergeometric series,
\emph{Acta Arith.} \textbf{171} (2015), 309--326.

\bibitem{GZ18}
\textsc{V. J. W. Guo} and \textsc{W. Zudilin},
Ramanujan-type formulae for $1/\pi$: $q$-analogues,
\emph{Integral Transforms Spec. Funct} \textbf{29} (2018), 505--513.

\bibitem{He}
\textsc{B. He},
Some congruences on truncated hypergeometric series,
\emph{Proc. Amer. Math. Soc.} \textbf{143} (2015), 5173--5180.

\bibitem{Kilbourn}
\textsc{T. Kilbourn},
An extension of the Ap\'ery number supercongruence,
\emph{Acta Arith.} \textbf{123} (2006), 335--348.

\bibitem{Long}
\textsc{L. Long},
Hypergeometric evaluation identities and supercongruences,
\emph{Pacific J. Math.} \textbf{249} (2011), 405--418.

\bibitem{LTYZ17}
\textsc{L. Long}, \textsc{F.-T. Tu}, \textsc{N. Yui} and \textsc{W. Zudilin},
Supercongruences for rigid hypergeometric Calabi--Yau threefolds,
\emph{Preprint} \href{http://arxiv.org/abs/1705.01663}{\texttt{arXiv:\,1705.01663 [math.NT]}} (May 2017), 33~pp.

\bibitem{MO08}
\textsc{D. McCarthy} and \textsc{R. Osburn},
A $p$-adic analogue of a formula of Ramanujan,
\emph{Arch. Math.} (\emph{Basel}) \textbf{91}:6 (2008), 492--504.

\bibitem{Mortenson}
\textsc{E. Mortenson},
A $p$-adic supercongruence conjecture of Van Hamme,
\emph{Proc. Amer. Math. Soc.} \textbf{136} (2008), 4321--4328.

\bibitem{Olive}
\textsc{G. Olive},
Generalized powers,
\emph{Amer. Math. Monthly} \textbf{72} (1965), 619--627.

\bibitem{OZ}
\textsc{R. Osburn} and \textsc{W. Zudilin},
On the (K.2) supercongruence of Van Hamme,
\emph{J. Math. Anal. Appl.} \textbf{433}  (2016),  706--711.

\bibitem{Ra93}
\textsc{M. Rahman},
Some quadratic and cubic summation formulas for basic hypergeometric series,
\emph{Canad. J. Math.} \textbf{45} (1993), 394--411.

\bibitem{Ra14}
\textsc{S.~Ramanujan},
Modular equations and approximations to~$\pi$,
\emph{Quart. J. Math. Oxford Ser.}~(2) \textbf{45} (1914), 350--372;
Reprinted in ``Collected papers of Srinivasa Ramanujan'',
G.\,H.~Hardy, P.\,V.~Sechu Aiyar, and B.\,M.~Wilson (eds.),
(Cambridge University Press, Cambridge, 1927), pp.~23--39.

\bibitem{Sun11}
\textsc{Z.-W. Sun},
Super congruences and Euler numbers,
\emph{Sci. China Math.} \textbf{54} (2011), 2509--2535.

\bibitem{Swisher}
\textsc{H. Swisher},
On the supercongruence conjectures of Van Hamme,
\emph{Res. Math. Sci.} \textbf{2} (2015), no.~18.

\bibitem{Hamme}
\textsc{L. Van Hamme},
Some conjectures concerning partial sums of generalized hypergeometric series,
in ``$p$-Adic Functional Analysis'' (Nijmegen, 1996),
\emph{Lecture Notes in Pure Appl. Math.} \textbf{192} (Dekker, New York, 1997), pp.~223--236.

\bibitem{Za10}
\textsc{D. Zagier},
Quantum modular forms,
in ``Quanta of maths'', \emph{Clay Math. Proc.} \textbf{11} (Amer. Math. Soc., Providence, RI, 2010), pp.~659--675.

\bibitem{Zu09}
\textsc{W.~Zudilin},
Ramanujan-type supercongruences,
\emph{J. Number Theory} \textbf{129} (2009), 1848--1857.

\end{thebibliography}
\end{document}